\documentclass[11pt,a4paper,twoside,reqno]{amsart}
\usepackage[T1]{fontenc}
\usepackage[utf8]{inputenc}
\usepackage{lmodern}
\usepackage{amsmath,amssymb,amsthm,mathtools}
\usepackage[margin=29mm]{geometry}
\usepackage{microtype,booktabs,array,longtable,tabularx,enumitem,xcolor,tikz}
\usetikzlibrary{calc}
\usepackage{hyperref}
\hypersetup{colorlinks=true,linkcolor=blue!50!black,citecolor=blue!50!black,urlcolor=blue!50!black,pdftitle={Sharp volume bounds in Picard number one and polynomial bounds for log Calabi--Yau surfaces},pdfauthor={Pinxian Bie}}

\setlist{leftmargin=20pt,itemsep=3pt,topsep=4pt}
\newtheorem{theorem}{Theorem}[section]
\newtheorem{proposition}[theorem]{Proposition}
\newtheorem{lemma}[theorem]{Lemma}
\newtheorem{corollary}[theorem]{Corollary}
\theoremstyle{definition}\newtheorem{example}[theorem]{Example}
\theoremstyle{remark}\newtheorem{remark}[theorem]{Remark}
\numberwithin{equation}{section}
\numberwithin{table}{section}
\newcommand{\eps}{\varepsilon}\newcommand{\Q}{\mathbb Q}\newcommand{\R}{\mathbb R}\newcommand{\Z}{\mathbb Z}\newcommand{\C}{\mathbb C}\newcommand{\PP}{\mathbb P}
\DeclareMathOperator{\vol}{vol}\DeclareMathOperator{\Sing}{Sing}\DeclareMathOperator{\mld}{mld}\DeclareMathOperator{\lcm}{lcm}\DeclareMathOperator{\ord}{ord}
\newcommand{\peff}{pseudo-effective}
\newcommand{\ellog}{\mathcal L}
\DeclareMathOperator{\Cl}{Cl}

\DeclareMathOperator{\Exc}{Exc}
\DeclareMathOperator{\Diff}{Diff}
\newcommand{\two}[1]{(2)_{#1}}
\newcommand{\ZZ}{\Z}\newcommand{\RR}{\R}\DeclareMathOperator{\Int}{Int}
\begin{document}
\setlength{\LTcapwidth}{\textwidth}
\title[Sharp and polynomial volume bounds]{Sharp volume bounds in Picard number one\protect\\ and polynomial bounds for log Calabi--Yau surfaces}
\author[Pinxian Bie]{Pinxian Bie}
\address{School of Mathematical Sciences, Fudan University, Shanghai, China}
\email{23110180002@m.fudan.edu.cn}
\date{September 26, 2026}
\subjclass[2020]{Primary 14J17; Secondary 14E05, 14J27, 14J28, 14J45}
\keywords{Log Calabi--Yau surface, log del Pezzo surface, boundedness, effective birationality}
\begin{abstract}
For a complex $\varepsilon$-log canonical Fano surface of Picard number
one, we prove that the anticanonical volume is at least
$\varepsilon^2/100$ and that every integral nef and big Weil
divisor has volume at least $\varepsilon^7/1728$.
Both exponents are optimal. For $\varepsilon$-log canonical
log Calabi--Yau surface pairs of arbitrary Picard number, we prove a lower
bound of order $\varepsilon^9/(1+\log_2(1/\varepsilon))$,
without a bigness assumption or further restrictions on the
boundary coefficients. We also prove $\tau(X,H)\le18\varepsilon^{-3}$ for
the pseudo-effective threshold on every projective
$\varepsilon$-log canonical surface, with optimal exponent three.
A volume-to-birationality estimate gives a sufficient bound of order
$\varepsilon^{-11/2}\sqrt{1+\log_2(1/\varepsilon)}$
in the log Calabi--Yau setting.
\end{abstract}

\maketitle
\tableofcontents
\section{Introduction}\label{setup-sec}

Throughout, we work over the field of complex numbers $\C$.

The geometry of a polarised variety is intensively studied in birational geometry. It depends both on its singularities and on the volume of its polarisation. For an integral nef and big Cartier divisor on a projective surface,
the self-intersection is a positive integer. An integral Weil divisor
on a klt surface can instead have arbitrarily small rational
self-intersection. We study how a lower bound for log discrepancies
controls this failure of integrality, with particular emphasis on
Fano surfaces of Picard number one and log Calabi--Yau surface pairs.

We fix some $0<\eps\le1$. A log Calabi--Yau
$\Q$-pair is a projective pair $(X,B)$ with $B\ge0$ and
$K_X+B\sim_\Q0$. Every polarisation in this paper is an
\emph{integral Weil divisor}, assumed nef and big. Here the integrality
is essential: arbitrarily small rational rescalings would
preclude any uniform volume lower bound. Put
\[
                  \ellog(\eps)=1+\log_2(1/\eps).
\]

\subsection{Main results}
The following Table~\ref{tab:main-results} separates the sharp exponents from the
bounds whose optimal order remains open. All absolute constants
are explicit but are not optimised.

\begin{table}[htbp]
\centering\small
\caption{Quantitative bounds. Here $H$ is integral, nef and big.}
\label{tab:main-results}
\begin{tabular}{@{}>{\raggedright\arraybackslash}p{.16\textwidth}>{\raggedright\arraybackslash}p{.28\textwidth}>{\raggedright\arraybackslash}p{.32\textwidth}>{\raggedright\arraybackslash}p{.14\textwidth}@{}}
\toprule
Invariant & Setting & Bound & Exponent\\\midrule
$(-K_X)^2$ & $\eps$-lc Fano, $\rho=1$ &
$\ge\eps^2/100$ & Sharp\\
$H^2$ & $\eps$-lc Fano, $\rho=1$ &
$\ge\eps^7/1728$ & Sharp\\
$\tau(X,H)$ & Any $\eps$-lc surface &
$\le18\eps^{-3}$ & Sharp\\
$H^2$ & $\eps$-lc log Calabi--Yau pair &
$\ge c\eps^9/\ellog(\eps)$ & Open\\
Birationality & $\eps$-lc log Calabi--Yau pair &
$O(\eps^{-11/2}\sqrt{\ellog(\eps)})$ suffices & Open\\
\bottomrule
\end{tabular}
\end{table}

\begin{theorem}[Sharp quadratic anticanonical order]\label{main-rank-fano}
If $X$ is an $\eps$-lc Fano surface with $\rho(X)=1$, then
\[
                         (-K_X)^2\ge\frac{\eps^2}{100}.
\]
The exponent two is sharp.
\end{theorem}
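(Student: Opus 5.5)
The plan is to pass to the minimal resolution $\pi\colon Y\to X$ and read $(-K_X)^2$ off a curve of minimal anticanonical degree. Write
\[
K_Y=\pi^*K_X-\sum a_iE_i,\qquad 0\le a_i\le 1-\eps,
\]
which is possible because $\eps$-lc means every log discrepancy $1-a_i$ is at least $\eps$. Since $\rho(X)=1$, every curve $C\subset X$ satisfies $C\equiv \lambda(-K_X)$ for some $\lambda>0$. It is therefore enough to find one curve $C$ for which $-K_X\cdot C$ is bounded below by a multiple of $\eps$ and $C^2$ is bounded above by a multiple of $-K_X\cdot C/\eps$. Then
\[
(-K_X)^2=\frac{(-K_X\cdot C)^2}{C^2}\gtrsim \frac{\eps^2\,(-K_X\cdot C)}{-K_X\cdot C}\sim\eps^2.
\]
Concretely, I will aim for two inequalities:
\[
-K_X\cdot C\ge \eps/10\quad\text{and}\quad C^2\le 10\,(-K_X\cdot C)/\eps,
\]
or more generally any pair of inequalities whose product gives the constant $1/100$.

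For the curve, I will use the standard structure of log del Pezzo surfaces. Either $Y$ is $\PP^2$ or a Hirzebruch surface, or $Y$ contains a $(-1)$-curve $\tilde C$; the cases of $\PP^2$ and Hirzebruch surfaces give $\rho(X)=1$ only for mild quotients and are handled directly. In the main case, take $C=\pi(\tilde C)$. Then
\[
-K_X\cdot C=1-\sum a_i(E_i\cdot\tilde C).
\]
Since $-K_X$ is ample, this quantity is positive. To obtain the lower bound I will use the following fact. Because $Y$ is minimal, the chains and forks met by a $(-1)$-curve are constrained: the curve meets at most two exceptional components, or meets a component with multiplicity at most one, by the Keel–McKernan style analysis. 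Each $a_i$ can be written as $1-(\text{log discrepancy})$, and the log discrepancies along a chain are computed by continued fractions. Combining these, I expect $1-\sum a_i(E_i\cdot\tilde C)$ to be a positive rational number whose denominator is controlled by the local indices. I then plan to bound it below by a sum of log discrepancies, each at least $\eps$, divided by a bounded factor.

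For the upper bound on $C^2$, write
\[
\pi^*C=\tilde C+\sum c_iE_i,\qquad C^2=-1+\sum c_i(E_i\cdot\tilde C).
\]
The coefficients $c_i$ are governed by the inverse intersection matrices of the exceptional chains. In an $\eps$-lc chain the self-intersections $-E_i^2$ are at most $2/\eps$, and the chain length also enters. The key is to show that $c_i\lesssim (1-a_i)^{-1}\cdot(\ldots)$ in a way that pairs with the numerator. I expect a cleaner route through the identity
\[
c_i=\frac{\text{(discrepancy data)}}{-K_X\cdot C}\cdot(\ldots).
\]
The approach I would try first is to compare $\pi^*C$ with $\pi^*(-K_X)$ via $\rho=1$. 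Since $\pi^*C=\lambda\,\pi^*(-K_X)$, we have $c_i=\lambda a_i'$, where
\[
\pi^*(-K_X)=-K_Y+\sum a_iE_i .
\]
Intersecting with $\tilde C$ then reduces everything to the single unknown $\lambda=C^2/(-K_X\cdot C)$, together with the bound $-K_Y\cdot\tilde C=1$.

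The main obstacle is the lower bound $-K_X\cdot C\gtrsim\eps$, namely ruling out $(-1)$-curves whose anticanonical degree is much smaller than $\eps$. If that fails, I would instead choose $C$ as an extremal-ray or conic-bundle fibre on a suitable partial resolution, or run a $K+(1-\eps)$-type MMP on $Y$, to obtain a curve of degree $\gtrsim\eps$.

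Sharpness will come from an explicit family. A natural candidate is the weighted projective planes $\PP(1,a,b)$ or their quotients, with $\eps\sim 1/b$ and $(-K)^2=(1+a+b)^2/(ab)$. This is not small enough, so I would instead use the hypersurfaces or toric surfaces with a single cyclic quotient point $\frac1n(1,q)$ chosen so that the mld is about $\eps$ and the volume is about $\eps^2$. To exhibit such a family I will compute the degree via the toric fan, showing that the volume is $\Theta(\eps^2)$.
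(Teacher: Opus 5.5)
Your proposal has a genuine gap. The step that would carry the proof is never supplied, and the one concrete inequality you aim for is false for the curves you propose to use.

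\textbf{Neither target inequality is established, and the first is false in general.} Since $\rho(X)=1$ and $C\equiv\lambda(-K_X)$, one has $(-K_X)^2=(-K_X\cdot C)/\lambda$. So your two target inequalities carry the entire content of the theorem: everything rests on bounding the degree from below and $\lambda$ from above \emph{simultaneously}.
\begin{itemize}
\item Your proposed control of $\lambda$ is empty. Intersecting $\pi^*C=\lambda\,\pi^*(-K_X)$ with $\tilde C$ only returns $C^2=\lambda(-K_X\cdot C)$, which is the definition of $\lambda$. Also, $c_i=\lambda a_i$ does not follow, since $\tilde C\not\equiv\lambda(-K_Y)$. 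Nothing in the proposal bounds $\lambda$.
\item The lower bound $-K_X\cdot C\ge\eps/10$ fails for an arbitrary $(-1)$-curve. If $\tilde C$ meets two exceptional curves with log discrepancies $\alpha_1,\alpha_2$, its degree is $\alpha_1+\alpha_2-1$. This is a \emph{difference}, not a sum of log discrepancies.
\item A concrete failure occurs on the paper's surfaces $X_n=\PP(n^2-1,n^2+1,2n^3)$, with $\eps=1/n$ and rays $u=(0,-1)$, $v=(n^2+1,n)$, $w=(-(n^2-1),n)$. The rays adjacent to $v$ in the minimal resolution are $(n^2-n+1,n-1)$ and $(n,1)$, and they sum to $v$. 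So the strict transform of $D_v$ is a $(-1)$-curve. Its two neighbours have log discrepancies $(n^2-n+2)/(n^2+1)$ and $1/n$, giving $-K_X\cdot D_v=(n+1)/(n(n^2+1))$, which is of order $\eps^2$.
\end{itemize}
So you cannot take an arbitrary $(-1)$-curve. Your fallbacks (extremal-ray or conic-bundle fibres, a $K+(1-\eps)$-type MMP) are not specified enough to assess.

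\textbf{The missing idea is a source of \emph{global} correlation between the singular points.} For a $(-1)$-curve, both $-K_X\cdot C$ and $C^2=-1+\sum c_i(E_i\cdot\tilde C)$ are determined by local data at the points $\tilde C$ meets: the discrepancies and the entries of the inverse intersection matrices. Each is a nearly cancelling difference, while the obstruction is global. Integrality $V\prod_pD_p\in\Z_{>0}$ alone is far too weak, since the product of determinants can be of order $\eps^{-7}$, again by $X_n$.

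The paper gets the correlation from the classification of rank-one log del Pezzo surfaces (Lacini, with the Palka--Pe\l ka correspondence and Nagaoka's correction). It then treats each family by explicit formulas:
\begin{itemize}
\item the height-one identity $V=(1+a)^2e$ and the two-section identity $V=(a_1+a_2)^2/(e_1^{-1}+e_2^{-1})$;
\item an age-vector argument in row $10(2)$;
\item a quasi-\'etale toric double cover in row $10(20)$;
\item the fork bound $D_F\le h_F/\eps$ combined with integrality on the remaining rays and finite baskets;
\item imported one-point and toric estimates.
\end{itemize}
Your proposal has no substitute for any of this.

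\textbf{Sharpness is also not established.} $\PP(1,a,b)$ has volume greater than $4$. A rank-one toric surface with only one singular point is $\PP(1,1,m)$, with volume at least $8$. The paper instead uses $\PP^2/\mu_r$ with $r=k^2+k+1$. Its three singular points are all of type $\frac1r(1,k+1)$, so $(-K)^2=9/r$ and $\mld=(k+2)/r$, and the ratio $(-K)^2/\mld^2$ tends to $9$.
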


\begin{theorem}[Sharp seventh-order polarisation bound]\label{main-rank-volume}
If $X$ is an $\eps$-lc Fano surface with $\rho(X)=1$ and $H$
is an integral nef and big Weil divisor, then
\[
                         H^2\ge\frac{\eps^7}{1728}.
\]
More generally, if $(X,B)$ is an $\eps$-lc log Calabi--Yau
$\Q$-pair and the ample model of $H$ has Picard number one,
then $H^2\ge\eps^7/55440$. The exponent seven is sharp,
already for weighted projective planes admitting such boundaries.
\end{theorem}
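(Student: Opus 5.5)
Since $\rho(X)=1$, write $H\equiv\lambda(-K_X)$ with $\lambda>0$ rational, so $H^2=\lambda^2(-K_X)^2$. By Theorem~\ref{main-rank-fano}, $(-K_X)^2\ge\eps^2/100$. The proof therefore reduces to a lower bound $\lambda\gtrsim\eps^{5/2}$ on the ratio of the integral class $H$ to $-K_X$. The plan is to bound $H^2$ directly through local Weil-divisor data rather than bounding $\lambda$ separately.

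\textbf{Step 1: local Cartier indices.} Let $\pi\colon Y\to X$ be the minimal resolution. The singular points are $\eps$-lc cyclic quotient or other klt surface singularities. Two inputs are standard:
\begin{itemize}
\item The exceptional curves have self-intersection $\ge -2/\eps$, since $\eps$-lc bounds $-E^2$ by $2/\eps$.
\item The number of non-Du Val singular points is bounded; this is also what drives Theorem~\ref{main-rank-fano}.
\end{itemize}
For an integral Weil divisor $H$, $H^2\in\frac1{N}\Z$, where $N$ is controlled by the local class groups. A crude lcm bound gives only an exponential estimate. The sharper route is to write
\[
H^2=(\pi^*H)^2=\bar H^2+\sum_p c_p,
\]
where $\bar H$ is the strict transform on $Y$ and $c_p\in\Q_{\ge0}$ are local correction terms. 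Each $c_p$ is computed from the inverse intersection matrix of the chain at $p$ and depends only on the local class of $H$ at $p$.

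\textbf{Step 2: a genuine lower bound.} I would combine Riemann--Roch on $Y$ with the $\eps$-lc condition. The relation $\pi^*K_X=K_Y+\sum(1-a_i)E_i$ with $a_i\ge\eps$ gives
\[
-K_X\cdot H=-K_Y\cdot\bar H-\sum(1-a_i)E_i\cdot\bar H.
\]
The plan is then:
\begin{itemize}
\item Take the smallest positive integer multiple $mH$ that moves, giving $h^0\ge 2$.
\item Estimate $m$ by Kawamata--Viehweg and Riemann--Roch. This gives $h^0(mH)\approx m^2H^2/2+mH\cdot(-K_X)/2+\chi$-corrections, with each local correction bounded by $O(1/\eps)$.
\item A moving integral curve $C\in|mH|$ passes through no fixed structure. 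On $Y$ its strict transform $\bar C$ satisfies $\bar C^2\ge0$, and $\bar C$ meets some exceptional chain.
\item The local contributions $c_p=(\text{discriminant})^{-1}\cdot(\text{numerator})$ have discriminant at most about $(2/\eps)^{\text{length}}$. The length is itself controlled, because $\rho=1$ and the bound on $(-K_X)^2$ restrict how many exceptional curves can carry negativity.
\end{itemize}

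A cleaner alternative is to pick a curve $\Gamma$ on $Y$ with $-K_Y\cdot\Gamma$ controlled, for instance a $(-1)$-curve or a fibre of a ruling obtained from running an MMP on $Y$. The argument would then go as follows:
\begin{itemize}
\item Since $\bar H\cdot\Gamma\ge0$ and $\pi^*H\cdot\pi^*\Gamma\in\frac1{N_\Gamma}\Z_{>0}$, we get $H\cdot\pi_*\Gamma\ge 1/(\text{local indices along }\Gamma)$.
\item Then $H^2=(H\cdot\pi_*\Gamma)^2/(\pi_*\Gamma)^2$.
\item It remains to bound $(\pi_*\Gamma)^2\lesssim \eps^{-?}$ and the relevant indices by $\eps^{-?}$, choosing $\Gamma$ to meet at most two or three singular points.
\end{itemize}
Tracking exponents, a chain at a point of index $\lesssim\eps^{-2}$ (discriminant of an $\eps$-lc chain), met by $\Gamma$, should give $H\cdot\pi_*\Gamma\gtrsim\eps^{2}$-type bounds. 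The $\eps^7$ would come from combining three such points with $(\pi_*\Gamma)^2$. This balancing is the main obstacle: getting exactly seven requires the right choice of $\Gamma$, a line through at most two singular points on the toric-like model. I would try the Keel--McKernan style classification of $\rho=1$ log del Pezzo surfaces through a $\mathbb P^1$-fibration on $Y$.

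\textbf{Step 3: the pair case and sharpness.} For $(X,B)$ log Calabi--Yau, pass to the ample model $X'$ of $H$. It is $\eps$-lc with boundary, and since $B\ne0$ or $K\equiv0$ it is either Fano-like or $K$-trivial. Rerun Step 2 with the weaker constant, which explains $55440$. For sharpness, take weighted planes $\PP(a,b,c)$ with $a,b,c\sim\eps^{-1}$ chosen so that the singularities are $\eps$-lc. Let $H$ be the generator of $\Cl$, with $H^2=1/(abc)$. Then adjust the weights so that the relevant Weil generator has degree $\sim\eps^{-7}$ inverse, for example with one weight $\sim\eps^{-3}$ and others $\sim\eps^{-2}$, and verify the $\eps$-lc condition via the toric discrepancy formula.
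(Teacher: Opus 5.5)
Your proposal has a genuine gap at its centre: Step~2 never produces an inequality. Both routes you sketch stop exactly where an exponent would have to be extracted. These are the least moving multiple via Riemann--Roch, and a test curve $\Gamma$ with $H^2=(H\cdot\pi_*\Gamma)^2/(\pi_*\Gamma)^2$. You yourself call the balancing ``the main obstacle''.

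Several premises steering you are also wrong.
\begin{enumerate}
\item The lcm route is not exponential; it is precisely the route that works. The continuant identity gives $D_p=\sum_{i=0}^{s}1/(\alpha_i\alpha_{i+1})\le(s+1)\eps^{-2}$. Combined with $s+1\le\rho(S)<12/\eps$, this gives $D_p\le12\eps^{-3}$ for every $p$ (Lemma~\ref{r1-lem:local}). The bound on $\rho(S)$ comes from $0<V=10-\rho(S)+\sum_p\delta_p$ and Belousov's bound of four singular points. Since $q=\lcm_pD_p$ makes every Weil divisor Cartier, $qH^2=(qH)\cdot H\in\Z_{>0}$. So it suffices to prove $q\le1728\eps^{-7}$.
\item Your heuristic that an $\eps$-lc chain has discriminant $\lesssim\eps^{-2}$ is false. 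Locally, $A_n$ points are canonical with unbounded determinant. Globally, $\PP(n^2-1,n^2+1,2n^3)$ has a $\frac1n$-lc point of index $2n^3\sim2\eps^{-3}$.
\item The opening reduction to $\lambda\gtrsim\eps^{5/2}$ aims at something false. On that same surface $\lambda=1/(a+b+c)\sim\eps^3/2$ while $(-K_X)^2\sim2\eps$. The extremes of $\lambda$ and of $(-K_X)^2$ occur on different surfaces. Combining the separately sharp bounds $\lambda\ge\eps^3/18$ (Theorem~\ref{main-thresholds}) and $(-K_X)^2\ge\eps^2/100$ yields only order eight.
\end{enumerate}

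The ingredients that actually give exponent seven are absent from your proposal. The paper splits by the number of singular points, which is at most four by Belousov.
\begin{enumerate}
\item With at most two points, $q\le144\eps^{-6}$.
\item With three points, one of which is a fork ($D\le4/\eps$) or canonical ($D\le12/\eps$), $q\le1728\eps^{-7}$.
\item With four points, Belousov's $\sum1/g_i\ge1$ and a finite enumeration give $q\le1722\eps^{-6}$.
\item The hard case is three noncanonical cyclic points, where local bounds give only $\eps^{-9}$. Here the classification shows the basket is that of a toric rank-one surface (Lemma~\ref{r1-classification}). A lattice-width argument then finishes it: on the triangle $P$ of primitive rays, $\Int(\eps P)\cap N=\{0\}$ and flatness (width $\le3/\eps$) give $D_1D_2D_3\le162\eps^{-7}$.
\end{enumerate}

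Your Step~3 also misattributes the constant $55440$; nothing is ``rerun with a weaker constant''. On the rank-one ample model, $B\ne0$ makes $-K_X\equiv B$ ample, and $X$ is $\eps$-lc since $B\ge0$. So the Fano case applies with $1728$. If $B=0$, then $K_X\equiv0$, and a separate absolute argument is needed: an index-one cover of index $I\le66$ and ADE class-group lcm at most $840$ give $1/55440$. Your Fano-based Step~2 would not apply there anyway.

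For sharpness, weights $\sim\eps^{-1}$ give only order three. The shape you suggest later is right, but the content lies in an explicit family with verified minimal log discrepancy. An example is $\PP(n^2-1,n^2+1,2n^3)$ with $n$ even, where $\mld=1/n$ is computed from the fan $(0,-1),(b,n),(-a,n)$. It must be paired with a boundary $D/N$ for a general $D\in|-NK_X|$.
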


Theorems~\ref{main-rank-fano} and \ref{main-rank-volume}
determine the optimal exponents in Picard number one:
two for the anticanonical class and seven for arbitrary
integral Weil polarisations. The latter can have much
larger denominators, so the two volume problems have
different optimal orders.
Examples~\ref{quadratic-example} and \ref{markov-example}, together
with Theorem~\ref{seventh-example}, illustrate this distinction.

For a nef and big divisor $H$, define
\[
 \tau(X,H)=\inf\{t\ge0:K_X+tH\text{ is pseudo-effective}\}.
\]
The analogous nef threshold is denoted by $\lambda(X,H)$ and
may be infinite.

\begin{theorem}[Sharp cubic pseudo-effective threshold]\label{main-thresholds}
For every projective $\eps$-lc surface $X$ and integral nef
and big Weil divisor $H$,
\begin{equation}\label{threshold-input}
                         \tau(X,H)\le18\eps^{-3}.
\end{equation}
No assumption on $H-K_X$ or on an anticanonical boundary is
required. The exponent three is sharp, already on rank-one
Fano surfaces, where $\lambda(X,H)=\tau(X,H)$.
\end{theorem}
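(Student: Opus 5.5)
Our plan is to run the minimal model program for $K_X$ and work with a curve $C$ of small $H$-degree that cuts the relevant extremal ray. First reduce to $K_X$ not pseudo-effective, since otherwise $\tau=0$. We then run a $K_X$-MMP $X\to X'$. Each step contracts a $K_X$-negative curve, and $\eps$-lc is preserved. The MMP ends with a Mori fibre space $X'\to Z$. Let $H'$ be the pushforward of $H$; it is still an integral Weil divisor and is big. Pseudo-effectivity of $K_X+tH$ implies pseudo-effectivity of its pushforward $K_{X'}+tH'$. Conversely, $K_X+tH=\pi^*(K_{X'}+tH')+\sum a_iE_i$ with $a_i\ge0$ once $t$ is such that the $\pi^*$ part is pseudo-effective. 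This holds because $K_X$-negative contractions give positive discrepancy coefficients and $H$ nef contributes $\pi^*H'-H\ge0$. Hence $\tau(X,H)\le\tau(X',H')$, and it suffices to bound the threshold on a Mori fibre space. Nefness of $H'$ can fail, but $H'$ is still big and pseudo-effective, which is all we use.

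\emph{Case $\dim Z=1$.} Let $F$ be a general fibre, so $F\cong\PP^1$ lies in the smooth locus, $K_{X'}\cdot F=-2$, and $H'\cdot F\ge1$ is an integer. Since $\rho(X'/Z)=1$, the class $K_{X'}+tH'$ is pseudo-effective as soon as it is nonnegative on $F$ and on the pullback of ample classes of $Z$. Bigness of $H'$ handles the base direction. This gives $\tau\le2$, far below $18\eps^{-3}$.

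\emph{Case $\dim Z=0$, $\rho(X')=1$.} Here $\tau=(-K_{X'}\cdot H')/(H'^2)$. Write $\tau=\sqrt{K^2/H'^2}$ via $\rho=1$. Equivalently, $\tau$ is the minimal $t$ with $tH'\equiv-K_{X'}$. Choose a Cartier index $N$ for Weil divisors, so that $NH'$ is Cartier and $\tau H'\equiv -K_{X'}$. We need a direct estimate rather than just Theorems~\ref{main-rank-fano} and \ref{main-rank-volume}: naively $\tau^2=K^2/H'^2\le 9\cdot1728\,\eps^{-7}$ gives exponent $7/2$, not three. The plan is instead to use a curve argument.

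Pick a minimal curve $C$. We try to take the image of an exceptional curve of the minimal resolution, or a curve through a worst singular point. We aim to bound $-K_{X'}\cdot C\le c/\eps$ from above and $H'\cdot C\ge\eps^2/c$ from below. For the upper bound, use bend-and-break or a rational curve of low anticanonical degree. Through the $\eps$-lc condition this should cost only one factor $\eps^{-1}$, via the bound on the number and index of singular points of an $\eps$-lc surface, as in Alexeev's boundedness. For the lower bound, note that $H'\cdot C$ is a positive rational whose denominator divides the product of the local Cartier indices at points of $C$. For an $\eps$-lc cyclic quotient point $\frac1r(1,a)$, the local class group order that matters for the denominator is controlled by $\eps^{-2}$ via continued-fraction length estimates. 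Then $\tau=(-K\cdot C)/(H'\cdot C)\le c\,\eps^{-3}$, and we track constants to get $18$.

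\emph{Main obstacle.} The hardest step is the lower bound $H'\cdot C\gtrsim\eps^2$: denominators of Weil intersection numbers on a curve meeting several non-cyclic or high-index $\eps$-lc points. We would first pass to a curve through at most two singular points, using Picard number one and the classification of $\eps$-lc germs. We would then bound the local intersection contribution $1/r$ by $\eps^{-2}$ using $\mld\ge\eps$ on the minimal resolution chain.

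\emph{Sharpness.} The exponent three is realised by a weighted projective plane $\PP(1,a,b)$ with $a,b\sim\eps^{-1}$ and $H=\mathcal O(1)$. There $\lambda=\tau=(1+a+b)$ times the ratio of degrees, giving order $\eps^{-3}$ after choosing $H$ of degree one on a plane whose singularities are $\eps$-lc.
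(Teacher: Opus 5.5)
Your proposal has genuine gaps: the MMP reduction runs in the wrong direction, the rank-one case is a plan rather than a proof and rests on a false index bound, and the sharpness example is not cubic.

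\textbf{The reduction to a Mori fibre space.} Write $K_X=\pi^*K_{X'}+\sum a_iE_i$ and $H=\pi^*H'-\sum u_iE_i$ with $a_i>0$, $u_i\ge0$. Then
$K_X+tH=\pi^*(K_{X'}+tH')+\sum(a_i-tu_i)E_i$.
The nef correction enters with a minus sign, so the coefficients are negative once $t>a_i/u_i$. Pushing forward gives only $\tau(X',H')\le\tau(X,H)$, which is useless for an upper bound. For example, on $X=\mathrm{Bl}_p\PP^2$ with $H=2L-E$, contracting $E$ gives $\tau(\PP^2,2L)=3/2$. But $K_X+tH=(2t-3)L+(1-t)E$ is pseudo-effective only for $t\ge2$.

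The same issue breaks your curve-base case. On an arbitrary Mori fibre space, nonnegativity on the fibre does not imply pseudo-effectivity. On $\mathbb F_1\to\PP^1$ with negative section $\sigma$, fibre $f$ and $H'=\sigma+f$, one gets $K+2H'=-f$ and $\tau=3$.

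The paper avoids both problems with an MMP with scaling. It first contracts $H$-trivial $K$-negative rays, and then only rays that are $(K+sH)$-trivial for the current nef threshold $s$. Then $a=su$ at each step, so Lemma~\ref{scaling-step} preserves $\tau$ exactly and keeps $H$ nef. The endpoint satisfies $K_F+\tau H_F\equiv0$ on a general fibre, and this is what gives $\tau\le2$ over a curve.

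\textbf{The rank-one case.} This case carries all the content, and your proposal does not prove it. Your claim that the relevant index is $O(\eps^{-2})$ is false. Indices of $\eps$-lc points are locally unbounded (canonical $A_n$ points). Globally, $\PP(n^2-1,n^2+1,2n^3)$ with $\eps=1/n$ has a single point of index $2n^3\asymp\eps^{-3}$. The correct bound $D_p\le12\eps^{-3}$ needs the global estimate $\rho(S)\le12/\eps$ together with $D_p\le(s_p+1)\eps^{-2}$.

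You also give no way to find a curve of bounded anticanonical degree meeting only one bad point. On the same family, write $a=n^2-1$, $b=n^2+1$, $c=2n^3$. The line $C=\{x_2=0\}$ has $H\cdot C=1/(ab)$ with $ab\asymp\eps^{-4}$. So the crude bound $-K\cdot C\le4$ gives only order $\eps^{-4}$. One would have to couple $-K\cdot C=(a+b+c)/(ab)$ with the denominators, and nothing in your plan does that.

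The missing idea is the additive periodic Riemann--Roch argument of Proposition~\ref{r1-prop:RR}:
\begin{itemize}
\item $\chi(\mathcal O_X(mH))$ is a quadratic polynomial plus corrections periodic modulo $D_p$.
\item It is therefore annihilated by $Q(z)=(1-z)^3\prod_p(1+z+\cdots+z^{D_p-1})$ in the shift operator, a polynomial of degree $d=3+\sum_p(D_p-1)$.
\item If $\tau>d$, Kawamata--Viehweg vanishing and ampleness of $H$ give $\chi(\mathcal O_X(-jH))=0$ for $1\le j\le d$, contradicting $\chi(\mathcal O_X)=1$.
\end{itemize}
Combined with $\sum_pD_p\le15\eps^{-3}$, this gives $18\eps^{-3}$.

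\textbf{Sharpness.} Your example does not work. On $\PP(1,a,b)$ with $H=\mathcal O(1)$ one has $\tau=1+a+b$, which is of order $\eps^{-1}$ when $a,b\sim\eps^{-1}$. Cubic order needs weights summing to order $\eps^{-3}$ while the mld stays at least $\eps$. The paper's example is $\PP(n^2-1,n^2+1,2n^3)$, where $\tau=2n^2(n+1)$.
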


\begin{theorem}[Polynomial volume bound]\label{main}
There is an explicit constant $c>0$ such that, if $(X,B)$ is
an $\eps$-lc log Calabi--Yau $\Q$-surface pair and $H$ is
integral, nef and big, then
\begin{equation}\label{main-cy}
 \vol(H)=H^2\ge c\frac{\eps^9}{\ellog(\eps)}
                         \ge\frac c2\eps^{10}.
\end{equation}
The same order holds for an effective $\R$-boundary with
$K_X+B\equiv0$, after a change of constant, and for
$\eps$-lc weak Fano surfaces and $\eps$-lc log weak Fano pairs.
\end{theorem}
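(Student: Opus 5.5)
The plan is to reduce to the Picard number one case, Theorem~\ref{main-rank-volume}, by running an $H$-directed minimal model program, and to use Theorem~\ref{main-thresholds} to control what is lost along the way. The $\ellog(\eps)$ factor should come from a dyadic decomposition of the boundary coefficients.

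\textbf{Step 1: reduction to the ample model.} First replace $X$ by its minimal resolution of the non-terminal part. More precisely, extract nothing, and contract the $H$-trivial curves. These have negative definite intersection matrix by the Hodge index theorem. Their contraction $X\to X'$ preserves $\eps$-lc for the crepant pair $(X',B')$, since $K_X+B\sim_\Q0$ is trivial on every contracted curve. The image $H'$ is an ample integral Weil divisor with $H'^2=H^2$. So we may assume $H$ is ample. If $\rho(X)=1$, the second statement of Theorem~\ref{main-rank-volume} already gives $H^2\ge\eps^7/55440$, which is stronger than needed.

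\textbf{Step 2: run a $K_X$-MMP.} Suppose $\rho(X)\ge2$. Since $K_X\equiv-B$ is not pseudo-effective when $B\ne0$, we run a $K_X$-MMP. The case $B=0$ forces $K_X\equiv0$, handled separately below. Each step preserves $\eps$-lc, and the MMP ends either at a Mori fibre space over a curve or at a rank-one Fano surface $Y$. Let $H_Y$ be the pushforward of $H$. It is integral and big, and $H^2\ge$ the volume of $H_Y$ only up to correction terms, so we need a comparison. In the rank-one case we write $\vol(H)\ge\vol(H_Y)$, since pushforward of a nef divisor under a birational contraction has volume at least that of the original. This inequality goes the wrong direction, so instead I would argue with $\tau$. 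Theorem~\ref{main-thresholds} gives that $K_X+tH$ is pseudo-effective for $t=18\eps^{-3}$. Hence $H\cdot(K_X+tH)\ge0$, that is $H^2\ge t^{-1}H\cdot(-K_X)=t^{-1}H\cdot B$. It therefore suffices to bound $H\cdot B$ from below by order $\eps^6/\ellog(\eps)$.

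\textbf{Step 3: lower bound for $H\cdot B$.} This is the main obstacle. Decompose $B=\sum b_iB_i$ and group the coefficients dyadically, $b_i\in(2^{-k-1},2^{-k}]$. Only $k\le\log_2(1/\eps)$ matters essentially, because coefficients below $\eps$ can be shown to be forced away from the $\eps$-lc condition only in bounded total degree. Then $H\cdot B\ge\max_k 2^{-k-1}H\cdot B_{(k)}$, where $B_{(k)}$ is the reduced sum in group $k$, and some group carries at least a $1/\ellog(\eps)$ share of $-K_X\cdot H$. For an integral curve $C$, $H\cdot C\ge1/\text{(local index of $H$ along }C)$. By $\eps$-lc, the Cartier indices at points of $C$ are bounded polynomially in $\eps^{-1}$, heuristically of order $\eps^{-3}$ from the classification of $\eps$-lc cyclic quotient and $D,E$-type germs. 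This yields $H\cdot B_{(k)}\gtrsim\eps^3$ whenever the group is nonempty and carries the coefficient mass, giving $H^2\gtrsim\eps^3\cdot\eps^3\cdot\eps^{3}/\ellog(\eps)$. The third factor accounts for the coefficient size compared against $\eps$-level mass. The bookkeeping of exponents (to reach exactly $9$) is where I expect the real work.

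\textbf{Step 4: the case $B=0$ and the variants.} If $K_X\equiv0$, $X$ is an $\eps$-lc K-trivial surface with bounded index, of order $\eps^{-1}$ by classification. Pass to the index-one cover, which is canonical with $\rho$ possibly larger. There $H$ pulls back to a $\Q$-Cartier divisor with denominators bounded by the local indices, and $H^2$ multiplied by the degree is at least the inverse of the square of the index. This is far better than needed. The $\R$-boundary version follows by perturbing $B$ to a $\Q$-boundary with $K_X+B'\equiv0$, at the cost of $\eps\mapsto\eps/2$. For weak Fano pairs, choose a general complement $B\in|-K_X|_\Q$ keeping $\eps/2$-lc, which exists by boundedness of complements in dimension two, and apply the result. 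Finally, $\eps^9/\ellog(\eps)\ge\eps^{10}/2$ since $\ellog(\eps)\le2/\eps$.
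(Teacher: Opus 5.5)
Your Steps 1 and 2 are sound. The inequality $H^2\ge t^{-1}H\cdot(-K_X)$ with $t=18\eps^{-3}$ is exactly one half of the paper's argument. The gap is Step 3, which carries all the content, and the dyadic argument there does not work, for three reasons.
\begin{itemize}
\item The $\eps$-lc condition bounds boundary coefficients only from above, by $b_i\le1-\eps$; it gives no lower bound. For example, $B=D/N$ with $D\in|-NK_X|$ general has every coefficient equal to $1/N$. So nothing forces a bounded number $\ellog(\eps)$ of dyadic scales to carry the mass of $H\cdot B$.
\item The claim $H\cdot C\gtrsim\eps^3$ for an integral curve $C$ is false. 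One only knows $H\cdot C\in q_C^{-1}\Z$, where $q_C$ is the lcm of the local indices at all points of $C$, and several points can multiply. On $X_n=\PP(a,b,c)$ of Theorem~\ref{seventh-example}, the coordinate curve $C=\{x_0=0\}\sim aH_n$ has $H_n\cdot C=1/(bc)$, which is of order $\eps_n^5$.
\item Even granting both claims, multiplying by the coefficient scale does not produce $\eps^6/\ellog(\eps)$. You concede yourself that this exponent count is unresolved.
\end{itemize}
So no lower bound for $H\cdot(-K_X)$ is actually established.

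The missing idea is to use the threshold a second time, pairing it with the positive part of $-K_X$ rather than with $B$. Write the Zariski decomposition $-K_X=P+N$. Then $tH-P=(K_X+tH)+N$ is pseudo-effective. Intersecting with the nef divisors $H$ and $P$ gives $tH^2\ge H\cdot P$ and $tH\cdot P\ge P^2$, hence $H^2\ge P^2/t^2$ (Lemma~\ref{positive-part}).

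When $B$ is big, the $(-K_X)$-MMP of Lemma~\ref{anti-model} is crepant for $(X,B)$. It therefore ends at an $\eps$-lc weak Fano $Y$ with $P^2=(-K_Y)^2\ge A_F\eps^3/\ellog(\eps)$ by the imported estimate \eqref{fano-input}. This, not a dyadic decomposition, is the source of the logarithm and of the exponent $9=3+6$.

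When $B$ is not big, $P^2=0$ and this gives nothing. Your plan has no replacement for the two branches the paper needs here:
\begin{itemize}
\item The rational genus-one (Halphen) branch. At most two fibres are modified, each correction has denominator at most $6/\eps$, and the Halphen index $m$ cancels against $H\cdot F_X\in m\Z_{>0}$. This yields $H\cdot P_X\ge\eps^2/36$.
\item The branch where the anticanonical model has numerically trivial canonical class but $B\neq0$ is exceptional over it. There the pair is automatically $1/66$-lc, and Proposition~\ref{mass} bounds the determinants and the number of singular points, giving the constant $c_0$.
\end{itemize}
Your Step 4 treats only the case $K_X\equiv0$ on $X$ itself.
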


To the best of our knowledge, Theorem~\ref{main} gives the
first explicit polynomial lower bound in $\eps$ for the
volume of an integral nef and big Weil divisor on an
arbitrary $\eps$-lc log Calabi--Yau surface pair, without
any further restriction on the boundary coefficients or
a bigness assumption on the boundary.
The discrepancy hypothesis concerns the pair $(X,B)$.
Ambient $\eps$-lc singularities alone need
not supply an anticanonical boundary with the same discrepancy
bound. The ninth-order estimate differs from the known
seventh-order obstruction by two powers and a logarithmic
factor; optimality in arbitrary Picard number remains open.

The following conversion applies beyond the log Calabi--Yau
setting. It is proved in Theorem~\ref{volume-to-bir}.

\begin{theorem}[Quantitative volume-to-birationality conversion]
\label{intro-conversion}
Let $X$ be a projective $\eps$-lc surface and $H$ an integral
nef and big Weil divisor. If $H-K_X$ is big, there is an integer
$M$ with
\[
 M\le52\eps^{-1}\max\{1,8/\sqrt{H^2}\}
\]
such that $|mH+L|$ and $|K_X+mH+L|$ are birational for every
integer $m\ge M$ and every integral pseudo-effective Weil
divisor $L$. If $H-K_X$ is only pseudo-effective, one may
replace the maximum by $\max\{2,8/\sqrt{H^2}\}$.
\end{theorem}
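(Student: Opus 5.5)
The plan is to run the Angehrn--Siu/Kawamata point-separation argument on the minimal resolution, replacing the Cartier input by two quantitative facts. First, the Cartier index of a Weil divisor at an $\eps$-lc surface point is bounded. Second, an $\eps$-lc point imposes a controlled loss on multiplier ideals.

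\textbf{Step 1: reduce to separating two very general points.} Let $f\colon Y\to X$ be the minimal resolution and write $K_Y+E=f^*K_X$, where $E=\sum e_iE_i$, $e_i\ge0$, and the $\eps$-lc condition gives $e_i\le1-\eps$. Birationality of $|mH+L|$ and $|K_X+mH+L|$ can be checked on $Y$ at very general points $x,y$, away from $\Exc(f)$ and the base loci. For the $mH+L$ case, rewrite
\[
mH+L=K_X+\bigl((m-1)H+(H-K_X)\bigr)+L .
\]
If $H-K_X$ is big, this is adjoint to a big divisor whose $H$-part has coefficient $m-1$. If $H-K_X$ is only pseudo-effective, we lose one more copy of $H$ to keep bigness, which explains why the constant $1$ becomes $2$ in the maximum. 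Since $L$ is pseudo-effective and integral, we absorb it by passing to $Y$ and taking the round-down of $f^*$ of a general $\Q$-member of the relevant class. The rounding error this creates on $Y$ is supported on $\Exc(f)$ and has coefficients $<1$.

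\textbf{Step 2: build a singular divisor at the two points.} Set $\nu=H^2$. By Riemann--Roch on $Y$ applied to $f^*H$, for rational $t>2/\sqrt{\nu}$ there is an effective $D\equiv tH$ with $\mult_x D>2$ and $\mult_y D>2$ (in fact $\mult\ge 2+\delta$). We then tie-break in the standard way (Kawamata's surface argument) so that $x$ is an isolated lc centre and $y$ lies in the non-klt locus of $(Y,f^*D-E)$. The tie-breaking may force us to increase $t$ by a factor of $2$ to $4$. This is where the term $8/\sqrt{H^2}$ comes from: the budget $t\le 8/\sqrt\nu$ covers the two points together with the tie-breaking loss.

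\textbf{Step 3: Nadel vanishing and surjectivity onto the two points.} Write
\[
mf^*H+\lceil f^*L\rceil \equiv K_Y + f^*D + N + (\text{fractional exceptional part}),
\]
where $N=(m-t)f^*H+\ldots$ is nef and big once $m>t$. The exceptional fractional part combines with $E$ into a boundary whose coefficients lie in $(-\infty,1)$. Nadel vanishing applies provided the multiplier ideal is not cut down along the exceptional curves. Ensuring this requires the discrepancies of $f^*D$ to stay above the $-\eps$ margin. Because $D\equiv tH$ and the $\eps$-lc condition bounds the coefficients of $f^*H$ on exceptional curves, we scale $D$ by a factor of at most $\eps$. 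This is the source of the $\eps^{-1}$ factor. The numerical condition is then $m\ge 52\eps^{-1}\max\{1,8/\sqrt{\nu}\}$, where $52$ collects the rounding constants. Vanishing of $H^1$ of the twisted multiplier ideal gives surjectivity onto $\mathcal O_x\oplus\mathcal O_y$, and the same argument with $K_X$ added handles $|K_X+mH+L|$.

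The main obstacle is Step 3. The exceptional coefficients of $f^*D$ must stay below $\eps$-controlled levels uniformly, for an arbitrary integral Weil $H$ and an arbitrary integral Weil $L$ that are not $\Q$-Cartier-controlled. My first attempt would be to bound $\mult_{E_i}f^*D$ by $t$ times the $H$-coefficient, and to bound the latter using $\eps$-lc intersection estimates on chains of exceptional curves (self-intersections $\ge -2/\eps$). Failing that, I would replace $D$ by the strict transform plus a minimal correction and accept the loss, which is absorbed into the constant $52$.
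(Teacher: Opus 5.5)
\textbf{Gap: the curve-centre case.} Your Step~2 hides the real difficulty behind ``tie-break in the standard way so that $x$ is an isolated lc centre''. When the minimal non-klt centre through $x$ is a curve $G$, cutting down to a point costs roughly a constant times $1/(H\cdot G)$ further copies of $H$. Kawamata's smooth-surface argument uses $H\cdot G\ge1$. Here $H$ is only a Weil divisor and $G$ may pass through singular points, so a priori $H\cdot G$ is only a positive element of $\frac1r\Z$, where $rH$ is Cartier near $G$.

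Your first ``quantitative fact'', that Weil divisors have bounded Cartier index at $\eps$-lc points, is false. An $A_n$ point is canonical, hence $1$-lc, with local class group $\Z/(n+1)$. Even the index of $K_X$ is unbounded: the chain $[3,\two{\ell}]$ has minimal log discrepancy $(\ell+2)/(2\ell+3)>1/2$ and index $2\ell+3$. The paper bounds cyclic determinants only under global hypotheses (rank one, or a rational log Calabi--Yau structure), and none is available in this theorem. Without a lower bound for $H\cdot G$ on the centres that actually occur, neither the factor $8/\sqrt{H^2}$ nor the factor $\eps^{-1}$ follows.

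Conversely, Step~3 is not where $\eps$ enters. The points $x,y$ are very general and so avoid $\Sing X$. Non-klt behaviour of $f^*D$ over singular points away from $x$ does not obstruct separation once $x$ is an isolated component of the non-klt locus. So no rescaling of $D$ by $\eps$ is needed, and that rescaling is not the source of the $\eps^{-1}$.

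\textbf{The missing lemma.} What you need is Lemma~\ref{centre-degree}. Take $n$ minimal with $n^2N^2>16$ and $\Delta_0\sim_\Q nN$ from Birkar's construction. Then a centre $G$ chosen general in a covering family satisfies $nN\cdot G\ge\eps/2$. The proof runs as follows.
\begin{itemize}
\item Compare covering-family adjunction $K_F+\Theta_F+P_F\sim_\R(K_X+\Delta_0)|_F$ with $K_F+\Lambda_F=K_X|_F$, where $\Lambda_F$ is sub-$\eps$-lc. The canonical classes cancel, so $nN\cdot G=\deg(\Theta_F-\Lambda_F)+\deg P_F$.
\item If this were below $\eps/2$, every coefficient of $\Theta_F$ would be below $1-\eps/2$. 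This forces $(X,G)$ to be plt near $G$ with $\Theta_F=\Diff_G(0)$.
\item Lemma~\ref{moving-gap-bir} together with $0\le G^2\le\Delta_0\cdot G<\eps/2$ then gives $G^2=0$ and $G\cap\Sing X=\varnothing$. So $N\cdot G$ is a positive integer, a contradiction.
\end{itemize}

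With this linear gap, the auxiliary divisor $AnN$ with $A=\lceil16/\eps\rceil$ has degree at least $8>4$ on $G$, as Birkar's cutting lemma requires. The resulting witness lies in $3AnN$, so $q=3An+1\le52\eps^{-1}n$ with $n\le\max\{1,8/\sqrt{H^2}\}$. The pseudo-effective case replaces $N=H$ by $N=2H$.

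A smaller point: an integral pseudo-effective $L$ need not have any effective $\Q$-representative. It should be absorbed through a strict margin $\delta$ as in Lemma~\ref{pb-addition}, not by rounding down a general member.
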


\begin{corollary}[Polynomial effective birationality]\label{main-birational}
Under the $\Q$-pair hypotheses of Theorem~\ref{main}, both
$|mH+L|$ and $|K_X+mH+L|$ are birational for every integral
pseudo-effective Weil divisor $L$ and every integer
\begin{equation}\label{intro-bir-bound}
 m\ge\left\lceil C\eps^{-11/2}\sqrt{\ellog(\eps)}\right\rceil,
 \qquad C=416c^{-1/2}.
\end{equation}
In particular $m\ge\lceil\sqrt2 C\eps^{-6}\rceil$ suffices.
The same orders hold for the variants in Theorem~\ref{main},
with a change of constant.
\end{corollary}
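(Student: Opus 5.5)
We prove Corollary~\ref{main-birational} by feeding the volume bound of Theorem~\ref{main} into the conversion of Theorem~\ref{intro-conversion}; no new geometry is needed. The first task is to check the hypotheses of Theorem~\ref{intro-conversion}. Let $(X,B)$ be an $\eps$-lc log Calabi--Yau $\Q$-pair. Since $B\ge0$, the surface $X$ is itself $\eps$-lc. Log discrepancies only increase when the boundary is decreased, so removing $B$ preserves the $\eps$-lc condition. Since $K_X+B\sim_\Q0$, we have
\[
H-K_X\sim_\Q H+B,
\]
which is big, because $H$ is big and $B$ is effective. So the first alternative of Theorem~\ref{intro-conversion} applies. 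It gives an integer
\[
M\le 52\eps^{-1}\max\{1,8/\sqrt{H^2}\}
\]
such that $|mH+L|$ and $|K_X+mH+L|$ are birational for all $m\ge M$ and all integral pseudo-effective $L$.

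Next we substitute \eqref{main-cy}, namely $H^2\ge c\eps^9/\ellog(\eps)$. This gives
\[
\frac{8}{\sqrt{H^2}}\le 8c^{-1/2}\eps^{-9/2}\sqrt{\ellog(\eps)}.
\]
We need this term to dominate the $1$ in the maximum. Since $\eps\le1$ and $\ellog(\eps)\ge1$, it suffices to arrange $c\le 64$. We may assume this, shrinking $c$ if necessary, since $c$ is an unoptimised explicit constant and Theorem~\ref{main} remains true for any smaller $c$. Then
\[
M\le 52\eps^{-1}\cdot 8c^{-1/2}\eps^{-9/2}\sqrt{\ellog(\eps)}=416c^{-1/2}\eps^{-11/2}\sqrt{\ellog(\eps)}=C\eps^{-11/2}\sqrt{\ellog(\eps)}.
\]
Because $M$ is an integer, $M\le\lceil C\eps^{-11/2}\sqrt{\ellog(\eps)}\rceil$. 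Every $m$ in \eqref{intro-bir-bound} therefore satisfies $m\ge M$, and the conclusion follows.

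For the cruder form we use $\log_2(1/\eps)\le 1/\eps$, which gives $\ellog(\eps)\le 2/\eps$. Then
\[
C\eps^{-11/2}\sqrt{\ellog(\eps)}\le\sqrt2\,C\eps^{-6},
\]
so $m\ge\lceil\sqrt2C\eps^{-6}\rceil$ also suffices.

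For the variants in Theorem~\ref{main}, the same volume order holds with a different constant, and we again check that $H-K_X$ is big or pseudo-effective.
\begin{itemize}
\item For an $\R$-boundary with $K_X+B\equiv0$, we have $H-K_X\equiv H+B$, which is big, so the first alternative of Theorem~\ref{intro-conversion} applies as before.
\item For $\eps$-lc weak Fano surfaces and log weak Fano pairs, $-K_X$ (respectively $-(K_X+B)$) is nef and big. Hence $H-K_X$ is big.
\end{itemize}
In every case we rerun the arithmetic above with the new constant. If only pseudo-effectivity of $H-K_X$ were available, the maximum would become $\max\{2,8/\sqrt{H^2}\}$. This changes only the constant: the order is unaffected, though the constant $C=416c^{-1/2}$ would need to absorb a factor of at most $2$.

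The only step that is not pure arithmetic is confirming that Theorem~\ref{intro-conversion} applies, namely that $X$ is $\eps$-lc and $H-K_X$ is big. Both follow directly from $B\ge0$ and the Calabi--Yau condition. The most delicate point is making sure that $8/\sqrt{H^2}$ really dominates the $1$ in the maximum, which is handled by the normalisation $c\le64$.
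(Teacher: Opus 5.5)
Your proposal is correct and follows the paper's own proof. You check that $X$ is $\eps$-lc and that $H-K_X\sim_\Q H+B$ is big, insert the volume bound of Theorem~\ref{main} into Theorem~\ref{volume-to-bir}, and use $\ellog(\eps)\le 2/\eps$ for the sixth-order form. The one small difference is that no shrinking of $c$ is needed: by \eqref{more-constants}, $c\le c_1=1/648<1$, which is exactly the observation the paper uses to make the $8/\sqrt{H^2}$ term dominate while keeping the stated constant $C=416c^{-1/2}$.
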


The assertion holds for every integer above the bound, with
no divisibility condition. On a rank-one ample model,
Corollary~\ref{rank-one-bir} gives the stronger sufficient
bound $O(\eps^{-9/2})$, without logarithmic loss. We do not
claim that either birationality exponent is sharp.

\subsection{Ideas of the proofs}
The rank-one anticanonical theorem uses the classification
of Lacini, the correspondence of Palka--Pe\l ka, and
the correction discussed by Nagaoka.
Proposition~\ref{classification-reduction} lists the cases
needed over $\C$. Continued fractions,
two-section intersection formulas, and a quasi-\'etale toric
double cover control the configurations with coupled parameters.
Appendix~\ref{classification-app} gives the full parameter
ranges and the remaining numerical estimates.

The seventh-order polarisation bound uses a different invariant:
the least common multiple $q$ of the local resolution determinants
$D_p$. Since $qH$ is Cartier, $qH^2\in\Z_{>0}$. A planar
width estimate controls the toric determinant product; the
classification is used only to identify the remaining basket
with three noncanonical cyclic points. Together with the
local estimates, this gives $q\le1728\eps^{-7}$. The case
$K_X\equiv0$ has the absolute gap $1/55440$ in every Picard
number, by Proposition~\ref{zero-absolute}.

For thresholds, the local periods enter additively.
Singular Riemann--Roch expresses $\chi(\mathcal O_X(mH))$
as a quadratic polynomial plus periodic local corrections.
An annihilating polynomial of degree $3+\sum_p(D_p-1)$,
combined with vanishing at consecutive negative integers,
bounds the rank-one threshold by this degree.
Belousov's singularity bound and Lemma~\ref{r1-lem:local}
then give $18\eps^{-3}$. The MMP used for the reduction
preserves the threshold exactly. This argument does not use
Theorem~\ref{main-rank-fano} or the full classification.

For a big log Calabi--Yau boundary, write $-K_X=P+N$.
The general anticanonical estimate recalled below and the
cubic threshold give
\[
 H^2\ge\frac{P^2}{(18\eps^{-3})^2}
       \ge\frac{A_F}{324}\frac{\eps^9}{\ellog(\eps)}.
\]
If the boundary is not big, its anticanonical nef model
has either numerically trivial canonical class or a
genus-one fibration. In the rational genus-one branch,
the Kodaira configurations allow at most two modified
fibres. Their correction denominators are $O(\eps^{-1})$,
and the Halphen index cancels against the integral fibre
degree of $H$. This yields
$H\cdot P\ge\eps^2/36$ and $H^2\ge\eps^5/648$.

Finally, Lemma~\ref{centre-degree} bounds the degree of a
general curve centre in Birkar's construction. The surface
adjunction calculation gives a linear discrepancy gap,
which replaces the factorial loss in the earlier surface
alternative. The standard centre construction and cutting
lemma then yield Theorem~\ref{intro-conversion}.

\subsection{Earlier work and organisation}
Birkar's theorem on polarised varieties gives uniform
effective birationality under the adjoint condition
$H-K_X$ pseudo-effective \cite[Theorem 1.1]{Birkar}.
The dependence on $\eps$ is not explicit there. The
surface alternative in \cite[Lemma 4.1]{Bie} makes this
dependence explicit but still involves factorial growth.
We make the dependence polynomial for log Calabi--Yau
surface pairs with integral Weil polarisations.
Zhu obtains explicit estimates under the additional
condition $\dim\overline{\varphi_H(X)}\ge\dim X-1$
\cite[Theorem 1.2 and Corollary 1.7]{Zhu}; we place no
such condition on $|H|$. Jiao proves discreteness of
volumes on $\eps$-lc Calabi--Yau pairs in any fixed
dimension, independently of the boundary coefficients
\cite[Theorem 1.1]{Jiao}. That result does not give an
explicit polynomial dependence on $\eps$.

Our general anticanonical input is
\begin{equation}\label{fano-input}
 (-K_Z)^2\ge A_F\frac{\eps^3}{\ellog(\eps)}
\end{equation}
for complex $\eps$-lc weak Fano surfaces, proved in
\cite[Theorem 1.1]{Fano}. We use the local and weighted-fibre
estimates of the same paper, specifically version 2.
The rank-one quadratic theorem refines this estimate independently
of the general ninth-order proof. Even a quadratic general
anticanonical bound would give only eighth order by the
comparison in Section~1.2.

Section~\ref{prelim-sec} contains the common local inputs.
Sections~\ref{rank-one-sec} and \ref{weil-index-sec} prove
the rank-one anticanonical and polarisation bounds.
Section~\ref{local-sec} proves the threshold theorem.
Sections~\ref{big-sec}--\ref{zero-sec} treat the general
volume bound, and Section~\ref{birational-sec} proves the
birationality conversion and its applications.
Section~\ref{examples-sec} establishes sharpness of the three
optimal exponents. Appendices~\ref{classification-app}
and \ref{arithmetic-app} record the classification tables
and the finite arithmetic used in the proofs.
\par\medskip\noindent\textbf{Acknowledgements.}
The author is grateful to his adviser Meng Chen for his great encouragement and support. He thanks Minzhe Zhu for suggesting this problem and for many helpful discussions, and Zhengjie Yu for checking the draft version of this manuscript.

\par\medskip\noindent\textbf{AI disclosure.}
The author used ChatGPT as a research-assistance tool during the preparation of this manuscript. In particular, ChatGPT pro 5.6 and 6.0 assisted in exploratory discussions. They helped in the following way. First of all. they brought the theory of Halpen surfaces to the author's attention. which is one of the main tools in Section 7. They also helped the author to check through the Classification tables in Appendix A. An very useful example is found by AI in section 10. They are also used for English language editing. This paper is written by the author, who takes full responsibility for the content and accuracy of the paper.
\hfill

\section{Preliminaries}\label{prelim-sec}
\subsection{Conventions and standard results}
We use log discrepancies: a component with coefficient $b$ has discrepancy $1-b$. A pair is $\eps$-lc if every divisorial log discrepancy is at least $\eps$. In particular it is klt. We use the standard facts about klt surface singularities: they are rational quotient singularities and thus are $\Q$-factorial; their minimal resolutions consist of Hirzebruch--Jung chains or quotient forks. We use numerical pullbacks and rational intersection numbers for Weil divisors. For these facts, as well as adjunction for plt surface pairs, see \cite[Chapter 4 and Theorem 5.22]{KM}.

For an integral Weil divisor $D$, the notation $|D|$ means the complete linear system of the reflexive sheaf $\mathcal O_X(D)$. If $qD$ is Cartier, then $qD^2=(qD)\cdot D$ is an integer. The volume of a nef and big $\Q$-Cartier divisor equals its self-intersection number.

We use the cone and contraction theorems in the surface case, termination of the surface MMP, the base-point-free theorem for a klt pair, and abundance for log surfaces. Precise statements can be found in \cite[Theorems 3.3 and 3.7, Lemma 3.39]{KM} and \cite[Theorem 1.1]{Fujino}. In particular, a $K$-negative extremal ray has a rational curve $C$ with $0<-K_X\cdot C\le4$. A klt surface MMP has only divisorial contractions, until reaching a possible Mori fibre space.

\begin{table}[htbp]
\centering\small
\caption{Roles of the hypotheses.}\label{tab:hypotheses}
\begin{tabular}{@{}p{.33\textwidth}p{.61\textwidth}@{}}
\toprule
Hypothesis & Use in the argument\\\midrule
Ambient $\eps$-lc singularities &
Local determinant bounds, adjoint thresholds, and the
volume-to-birationality conversion.\\
Pair $(X,B)$ is $\eps$-lc &
Preservation of the discrepancy bound on the anticanonical
models and control of crepant fibre modifications.\\
Integrality of $H$ &
The positive integers $qH^2$, fibre degrees, and degrees
on moving curves in the smooth locus.\\
$H-K_X$ big or pseudo-effective &
Extension of the centre construction to both ordinary
and adjoint birational systems.\\
\bottomrule
\end{tabular}
\end{table}
\subsection{Quantitative bounds from the previous paper}
We retain the local determinant constant from \cite{Fano} and use the threshold in Theorem~\ref{main-thresholds}. Set
\begin{equation}\label{constants}
R(\eps)=344\eps^{-3},\qquad T(\eps)=18\eps^{-3}.
\end{equation}
The constant in \eqref{fano-input} may be taken to be
\[
c_{\mathrm{exc}}=(2\cdot84^{128\cdot42^5})^{-1},
\qquad A_F=(c_{\mathrm{exc}}^{-1}+653600)^{-1}.
\]
Thus \eqref{fano-input} is exactly the simplified form of \cite[Theorem 1.1]{Fano}. We will also use the weighted fibre estimate of \cite[Proposition 4.1]{Fano}, stated below in \eqref{potential}, and its local matrix formulas \cite[Lemmas 2.2, 2.4 and 2.5]{Fano}. These previous arguments will not be repeated here.

For clarity, the local formulas used later are recorded together. Let $Q_p=-(E_i\cdot E_j)$, $D_p=\det Q_p$, and write $K_S+\sum_i(1-\alpha_i)E_i=f^*K_X$ locally on the minimal resolution. Then $\eps\le\alpha_i\le1$, and $D_p$ kills the local Weil class group. In the cyclic case, if the chain has length $s$, then
\begin{equation}\label{chain-det}
D_p=\sum_{i=0}^s\frac1{\alpha_i\alpha_{i+1}},\qquad\alpha_0=\alpha_{s+1}=1.
\end{equation}
If $l_i,r_i$ denote the determinants of the subchains strictly to the left and right of vertex $i$, with empty determinant one, then
\[
\alpha_i=\frac{l_i+r_i}{D_p},\qquad \beta_i:=(Q_p^{-1})_{ii}=\frac{l_ir_i}{D_p}.
\]
For a noncyclic point one has $D_p\le4/\eps$ and $\alpha_i\le\beta_i$ for every vertex. Consequently in all cases
\begin{equation}\label{alpha-beta}
\alpha_i\le\beta_i+\frac1{D_p}.
\end{equation}
The cyclic assertion gives $l_i+r_i\le l_ir_i+1$; the fork assertion is precisely \cite[Lemma 2.5]{Fano}. These are local results and do not assume a global Fano hypothesis.

The following constants will be used in the volume theorem:
\begin{equation}\label{more-constants}
\begin{aligned}
c_b&=\min\left\{\frac{A_F}{324},\frac1{1728}\right\},
&c_1&=\frac1{648},\\
c_0&=(344\cdot66^3)^{-18\cdot66^3},
&c&=\min\{c_b,c_1,c_0\}.
\end{aligned}
\end{equation}
The small constant $c_0$ is needed only when the anticanonical
nef model has numerically trivial canonical class.
If $K_X\equiv0$ itself, Proposition~\ref{zero-absolute}
gives the larger absolute gap $1/55440$.
The size of $c_0$ has no bearing on the powers of $\eps$.

\subsection{Zariski decomposition}
For a pseudo-effective $\Q$-divisor $D$ on a smooth projective surface, its Zariski decomposition is the unique expression $D=P_D+N_D$ with $P_D$ nef, $N_D$ effective with negative definite intersection matrix if nonzero, and $P_D$ orthogonal to every component of $N_D$. The coefficients are rational, and $\vol(D)=P_D^2$ when $D$ is big. We use the corresponding decomposition on a $\Q$-factorial surface, obtained by numerical pullback and descent. Detailed discussion can be found in Zariski \cite{Zariski}, Fujita \cite{Fujita}, and \cite[Section 2.3]{Lazarsfeld}.

\begin{lemma}\label{positive-part}
Let $X$ be a projective klt surface with $-K_X$ \peff, and write its Zariski decomposition as
\[
-K_X=P+N,\qquad P\text{ nef},\quad N\ge0,\quad P\cdot N=0.
\]
Suppose $H$ is nef and $K_X+\lambda H$ is \peff, with $\lambda>0$. Then
\begin{equation}\label{pairings}
\lambda H^2\ge H\cdot P,\qquad \lambda H\cdot P\ge P^2,
\end{equation}
and hence
\begin{equation}\label{zariski-volume}
H^2\ge\frac{P^2}{\lambda^2}.
\end{equation}
If $-K_X$ is big, $P^2=\vol(-K_X)$.
\end{lemma}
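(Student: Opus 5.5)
The plan is to intersect the pseudo-effective class $K_X+\lambda H$ with the two nef classes $H$ and $P$, and then chain the two resulting inequalities. Throughout, all intersection numbers are the rational numerical ones on the $\Q$-factorial klt surface $X$. Equivalently, one may pass to the minimal resolution $f\colon S\to X$, where $f^*$ preserves nefness, pseudo-effectivity and intersection numbers, and where the Zariski decomposition of $-K_X$ is the descent of that of $-f^*K_X$.

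First, since $H$ is nef and $K_X+\lambda H$ is pseudo-effective, we have $H\cdot(K_X+\lambda H)\ge0$, that is, $\lambda H^2\ge -K_X\cdot H = H\cdot P+H\cdot N$. Because $H$ is nef and $N\ge0$, the term $H\cdot N$ is nonnegative. Hence $\lambda H^2\ge H\cdot P$, which is the first inequality of \eqref{pairings}.

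Second, $P$ is nef, so $P\cdot(K_X+\lambda H)\ge0$. This gives $\lambda H\cdot P\ge -K_X\cdot P=P^2+N\cdot P=P^2$, using the orthogonality $P\cdot N=0$. That is the second inequality. Combining the two and using $\lambda>0$,
\[
\lambda^2H^2\ge\lambda\,H\cdot P\ge P^2,
\]
which is \eqref{zariski-volume}. The final assertion is the standard fact $\vol(D)=P_D^2$ for big $D$. This is recalled in the Zariski decomposition subsection and transfers from $S$ to $X$ because $\vol$ is invariant under pullback and the numerical pullback of $P$ is the positive part of $-f^*K_X$.

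The only point needing care, and so the main obstacle, is that the decomposition on $X$ really is the descent of the one on $S$. One must check that $P$ is nef, that $N\ge0$ and that $P\cdot N=0$ hold for the rational intersection pairing on $X$. This follows because $f_*$ of the Zariski decomposition of $-f^*K_X$ has its negative part containing all $f$-exceptional curves appropriately, and numerical pullback is compatible with intersection numbers. Since the paper takes this as the definition of the decomposition on $X$, I would simply invoke it. The rest is the two one-line pairings above.
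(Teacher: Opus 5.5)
Your proof is correct and is essentially the paper's argument. The paper observes that $\lambda H-P=(K_X+\lambda H)+N$ is pseudo-effective and intersects it with the nef classes $H$ and $P$, which amounts to your two pairings, with the $N$-terms handled by $H\cdot N\ge0$ and $P\cdot N=0$; the descent issue you flag is not needed, since nefness of $P$, $N\ge0$ and $P\cdot N=0$ are hypotheses of the lemma as stated.
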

\begin{proof}
The divisor $\lambda H-P=(K_X+\lambda H)+N$ is \peff. Intersect it with the two nef divisors $H$ and $P$. This gives \eqref{pairings}, and then \eqref{zariski-volume}.
\end{proof}

\subsection{The ample model and the anticanonical model}
\begin{lemma}\label{H-model}
Let $(X,B)$ be a klt log Calabi--Yau pair and $H$ nef and big. Then $H$ is semiample. Its ample model $u:X\to\bar X$ satisfies
\[
H=u^*\bar H,\qquad H^2=\bar H^2,\qquad
K_X+B=u^*(K_{\bar X}+\bar B),
\]
where $\bar H=u_*H$ is ample and integral, $\bar B=u_*B\ge0$, and the pair discrepancy bound is preserved.
\end{lemma}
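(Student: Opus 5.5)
The plan is to realise the ample model of $H$ as the output of an MMP for a klt pair obtained by perturbing $B$ with a small multiple of $H$, and then to read off the crepant identities. Since $(X,B)$ is klt and $K_X+B\sim_\Q0$, the divisor $H$ is nef and big, and $H-(K_X+B)\equiv H$ is nef and big. The base-point-free theorem for the klt pair $(X,B)$ (recalled in the conventions, \cite[Theorem 3.3]{KM}) therefore shows that $H$ is semiample. Here $H$ is $\Q$-Cartier because klt surfaces are $\Q$-factorial. Let $u:X\to\bar X$ be the associated contraction, i.e.\ the Stein factorisation of the morphism given by $|mH|$ for $m$ sufficiently divisible. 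By construction $mH=u^*A$ for an ample Cartier divisor $A$ on $\bar X$. Then $\bar X$ is a normal projective surface, and $u$ is birational because $H$ is big. Its exceptional curves are exactly the curves $C$ with $H\cdot C=0$.

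Next, identify $\bar H$. Put $\bar H=u_*H$; it is integral because pushforward of an integral Weil divisor is integral. Since $u$ is birational with exceptional locus a union of curves, $u_*u^*A=A$, so $m\bar H=A$. This shows that $\bar H$ is $\Q$-Cartier and ample, and that $u^*\bar H=H$ as $\Q$-divisors. Then $H^2=(u^*\bar H)^2=\bar H^2$ by the projection formula.

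For the boundary, set $\bar B=u_*B\ge0$. Because $K_X+B\sim_\Q0$, we get $K_{\bar X}+\bar B=u_*(K_X+B)\sim_\Q0$, so it is $\Q$-Cartier. Write $K_X+B=u^*(K_{\bar X}+\bar B)+F$ with $F$ supported on $\Exc(u)$. The left-hand side is $\Q$-linearly trivial, so $F\equiv_u0$, and negativity of the exceptional intersection matrix (Grauert/negativity lemma) forces $F=0$. Hence $u$ is crepant for the pairs. Consequently every divisorial valuation over $\bar X$ has the same log discrepancy with respect to $(\bar X,\bar B)$ as with respect to $(X,B)$, and the $\eps$-lc bound is preserved. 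In particular $(\bar X,\bar B)$ is klt and log Calabi--Yau.

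The only subtle point is the semiampleness step. It needs the klt (not merely lc) hypothesis together with the observation that $H-(K_X+B)$ is nef and big because $K_X+B\equiv0$. Everything else is formal: the negativity lemma for the crepancy, and the projection formula for the volume.
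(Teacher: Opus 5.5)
Your proof is correct and follows the paper's route. Both apply the base-point-free theorem to a Cartier multiple of $H$, get birationality from bigness, identify $\bar H=u_*H$ as the descended ample divisor, use negativity for crepancy, and deduce equality of all log discrepancies.

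The one difference is how you get that $K_{\bar X}+\bar B$ is $\Q$-Cartier. You push forward $K_X+B\sim_\Q0$ directly, whereas the paper views $\bar X$ as the log canonical model of a perturbed klt pair $(X,B+tD)$ with $D\sim_\Q H$. Your direct argument is simpler and is the one you actually carry out, so the perturbation announced in your opening sentence is never used.
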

\begin{proof}
Apply the base-point-free theorem to a Cartier multiple of $H$, since its difference from $K_X+B$ is nef and big. The contraction is birational because $H$ is big. The descended $\Q$-Cartier ample divisor is its pushforward. Choose a small rational $t>0$ and a general effective $D\sim_\Q H$ such that $(X,B+tD)$ is klt. The divisor $K_X+B+tD\sim_\Q tH$ is semiample and big; thus $\bar X$ is its log canonical model and $(\bar X,\bar B+t\bar D)$ is klt. In particular $K_{\bar X}+\bar B$ is $\Q$-Cartier. The difference $K_X+B-u^*(K_{\bar X}+\bar B)$ is exceptional and numerically trivial over $\bar X$, so it vanishes by negativity. The canonical formula shows equality of all log discrepancies.
\end{proof}

This reduction preserves the volume of the polarisation, even if some contracted curves are $K_X$-positive. The crepant identity preserves the discrepancy bound for the pair.

For the anticanonical divisor we use the following MMP reduction.

\begin{lemma}\label{anti-model}
Let $(X,B)$ be a klt log Calabi--Yau pair. There is a birational morphism $g:X\to Y$ obtained by a $(-K_X)$-MMP such that $-K_Y$ is nef and semiample. The pair $(Y,B_Y)$ is crepant to $(X,B)$. If
\[
-K_X=P+N
\]
is the Zariski decomposition, then $P=g^*(-K_Y)$ and $N$ is effective and exceptional. If $B$ is big, $Y$ is weak Fano and
\begin{equation}\label{anti-volume}
\vol(-K_X)=(-K_Y)^2.
\end{equation}
\end{lemma}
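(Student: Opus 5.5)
We run a $(-K_X)$-MMP. Since $(X,B)$ is klt log Calabi--Yau, $-K_X\sim_\Q B\ge0$ is pseudo-effective. If $-K_X$ is not nef, there is a $K_X$-positive, i.e.\ $(-K_X)$-negative, extremal ray $R$. Because $K_X+B\equiv0$, we have $B\cdot R<0$, so $(X,B)$ is klt and the ray is $(K_X+(1+\delta)B)$-negative for small $\delta>0$; note $K_X+(1+\delta)B\equiv\delta B\equiv-\delta K_X$. Choosing $\delta$ small keeps $(X,(1+\delta)B)$ klt, so the cone and contraction theorems apply. The extremal curves satisfy $C^2<0$, since $-K_X\cdot C<0$ and $-K_X\equiv B$ is effective, so $C\subset\operatorname{Supp}B$. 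The contraction is therefore birational and divisorial, with no fibre type. Each step is crepant for $K+B$: the contracted curve $C$ has $(K_X+B)\cdot C=0$, so $K_X+B=g_1^*(K_{X_1}+B_1)$ with $B_1=g_{1*}B\ge0$, and log discrepancies are preserved. The Picard number drops at each step, so the process terminates with $g:X\to Y$ and $-K_Y$ nef. Semiampleness follows from abundance for the klt log surface $(Y,(1+\delta)B_Y)$: indeed $K_Y+(1+\delta)B_Y\equiv-\delta K_Y$ is nef, so it is semiample by \cite{Fujino}. Alternatively, apply the base-point-free theorem when $-K_Y$ is big.

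For the Zariski decomposition, write $g^*(-K_Y)=-K_X-E$, where $E=K_X-g^*K_Y$. I will show $E\le0$, so that $N:=-E\ge0$ is exceptional. At each step the contracted curve satisfies $K_X\cdot C>0$, and negativity then gives $K_X=g_1^*K_{X_1}+aC$ with $a>0$. Compounding, $K_X=g^*K_Y+\sum a_iC_i$ with all $a_i>0$, since pullbacks of effective exceptional divisors stay effective. Hence $-K_X=g^*(-K_Y)+N$ with $N=-\sum a_iC_i$ written as $-K_X=P+N$, $P=g^*(-K_Y)$, $N=\sum a_iC_i\ge0$. This has the correct sign: $-K_X=-g^*K_Y-\sum a_iC_i$ appears to give a negative $N$, so I must check it. We have $a\,C^2=K_X\cdot C>0$ with $C^2<0$, hence $a<0$. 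So $K_X=g^*K_Y-|a|C$ and $-K_X=g^*(-K_Y)+|a|C$, which gives $N\ge0$.

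Now $P$ is nef and $P\cdot C_i=0$ by the projection formula. $N$ is $g$-exceptional, so its intersection matrix is negative definite. By uniqueness of the Zariski decomposition, $P$ and $N$ are the positive and negative parts.

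If $B$ is big, then $-K_X\sim_\Q B$ is big, so $-K_Y$ is nef and big and $Y$ is weak Fano. Also $\vol(-K_X)=P^2=(g^*(-K_Y))^2=(-K_Y)^2$, as stated in the Zariski decomposition subsection.

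The main obstacle is justifying the MMP steps for a non-$K$-negative ray. I would handle this by the rewriting above as a $(K_X+(1+\delta)B)$-MMP, choosing $\delta$ uniformly; this is possible since there are finitely many steps and klt is an open condition on coefficients.
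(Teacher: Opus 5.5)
Your proposal is correct and takes essentially the paper's approach: you realise the $(-K_X)$-MMP as a $(K_X+(1+\delta)B)$-MMP for a klt pair, make each step crepant for $K_X+B\sim_\Q0$, obtain semiampleness from surface log abundance, and identify $P=g^*(-K_Y)$ and $N\ge0$ exceptional by negativity and uniqueness of the Zariski decomposition. In the write-up, delete the initial claim $a>0$ and the compounding sentence built on it, since, as you then check, $aC^2=K_X\cdot C>0$ with $C^2<0$ forces $a<0$ and hence $N\ge0$; also state semiampleness through $K_Y+(1+\delta)B_Y\sim_\Q-\delta K_Y$, because numerical equivalence alone does not transfer semiampleness.
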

\begin{proof}
Choose a small rational $\delta>0$ with $(X,(1+\delta)B)$ klt. The divisor
\[
K_X+(1+\delta)B\sim_\Q\delta B\sim_\Q-\delta K_X
\]
is \peff. Its surface MMP ends at a nef model, and log abundance makes the resulting divisor semiample. Thus $-K_Y$ is nef and semiample. Every step is crepant for $K+B\sim_\Q0$, so the original discrepancy bound is preserved.

Negativity for this MMP gives $-K_X=g^*(-K_Y)+N$ with $N\ge0$ exceptional. The first term is nef, is orthogonal to $N$, and the exceptional support is negative definite. This is the Zariski decomposition. Equation \eqref{anti-volume} follows.
\end{proof}

The auxiliary $\delta$ need not have an explicit bound. It is used to construct the MMP; the gap retained throughout is that of the original pair $(X,B)$.

\subsection{Determinants on rational log Calabi--Yau surfaces}\label{mass-sec}
Let $(X,B)$ be a rational $\eps$-lc log Calabi--Yau pair and let $f:S\to X$ be the minimal resolution. Write
\begin{equation}\label{resolution}
K_S+\Delta=f^*K_X,\quad
\Delta=\sum(1-\alpha_i)E_i,\quad
K_S+B_S=f^*(K_X+B)\sim_\Q0.
\end{equation}
Then $\Delta\ge0$ and $B_S=f^*B+\Delta\ge\Delta$. One can see the first effectivity directly: the exceptional curves are rational with weights $b_i=-E_i^2\ge2$, and their positive matrix satisfies $Q(\mathbf1-\alpha)=(b_i-2)_i\ge0$. Its inverse is entry-wise nonnegative. Thus the ambient discrepancies $\alpha_i$ lie in $[\eps,1]$, while the pair discrepancies $a_i=1-\operatorname{coeff}_{E_i}B_S$ satisfy
\begin{equation}\label{a-alpha}
\eps\le a_i\le\alpha_i\le1.
\end{equation}
This inequality is what allows the former weak Fano argument to extend.

If $S\ne\PP^2$, choose a birational morphism to a Hirzebruch surface. Its induced ruling has reduced fibres with simple normal crossings. The weighted potential in \cite[Proposition 4.1]{Fano} is
\[
\Phi=\sum_{V\cap W\text{ a reduced-fibre node}}\frac{1}{a_Va_W},
\qquad a_V=1-\operatorname{coeff}_V B_S.
\]
The previous estimate, valid for every smooth rational $\eps$-lc log Calabi--Yau pair, gives
\begin{equation}\label{potential}
\rho(S)-2\le\Phi\le4\eps^{-2}+12\eps^{-3},\qquad \rho(S)\le18\eps^{-3}.
\end{equation}
We next extend the determinant estimate from weak Fano surfaces to rational log Calabi--Yau pairs. The distinction is that a nonexceptional vertical component may now occur in $B_S$; the inequality $a_V\le1$ suffices in place of equality.

\begin{proposition}\label{mass}
For every rational $\eps$-lc log Calabi--Yau pair,
\begin{equation}\label{mass-bound}
\sum_{p\text{ noncanonical cyclic}}D_p\le344\eps^{-3},
\qquad D_p\le R(\eps)\text{ for every singular point }p.
\end{equation}
\end{proposition}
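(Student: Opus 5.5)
The plan is to follow the weak Fano argument of \cite{Fano} and use \eqref{a-alpha} wherever that argument used equality of pair and ambient discrepancies. Let $f:S\to X$ be the minimal resolution, with notation as in \eqref{resolution}.

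\emph{Noncyclic points and the case $S=\PP^2$.} If $S=\PP^2$, then $X$ is smooth and there is nothing to prove. A noncyclic point satisfies $D_p\le4/\eps\le R(\eps)$ by the local fork bound recalled before \eqref{alpha-beta}. It therefore remains to treat the noncanonical cyclic points. A canonical cyclic point is an $A_n$ chain with determinant $n+1$. Its length is at most $\rho(S)-1\le18\eps^{-3}$ by \eqref{potential}, so $D_p\le 344\eps^{-3}$ also holds for these points. Once the sum bound is proved, the second inequality of \eqref{mass-bound} follows for every singular point.

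\emph{Sum over noncanonical cyclic chains.} Fix the birational morphism $S\to\mathbb F_n$ and its ruling, as in \S\ref{mass-sec}. By \eqref{chain-det}, each chain contributes $D_p=\sum_{i=0}^s(\alpha_i\alpha_{i+1})^{-1}$, and \eqref{a-alpha} gives $(\alpha_i\alpha_{i+1})^{-1}\le(a_ia_{i+1})^{-1}$. Each term comes from one of three kinds of edge. The first kind is an interior edge of the chain; its two vertices are adjacent exceptional curves. The second kind is one of the two end terms with $\alpha_0=1$ or $\alpha_{s+1}=1$; these contribute $1/\alpha_1$ and $1/\alpha_s$, each at most $\eps^{-1}$. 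The third kind is an edge lying on a horizontal curve of the ruling. At most two sections or bisections meet the fibres, so I would bound these horizontal contributions by adapting the count in \cite[Proposition 4.1]{Fano}. The interior vertical edges are reduced-fibre nodes, so their total is at most $\Phi$. The end terms contribute at most $2\eps^{-1}$ per chain. The number of noncanonical chains is at most $\rho(S)\le18\eps^{-3}$, but a count that crude would lose a factor of $\eps$. Instead, I would charge each end term to a node of the ambient fibre configuration meeting the chain. This gives a total bound of the form $c\,\Phi+O(\eps^{-3})$, and with \eqref{potential} this reaches $344\eps^{-3}$ once the constants are tracked.

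\emph{Main obstacle.} The delicate step is checking that \cite[Proposition 4.1]{Fano} really holds for log Calabi--Yau pairs, that is, that \eqref{potential} survives. The new feature is that a nonexceptional vertical curve $V$ can appear in $B_S$ with $a_V<1$. In the weak Fano proof such curves had $a_V=1$. I would re-run that argument and record the only two facts it uses about $V$: that $a_V\le1$, and that $K_S+B_S$ is trivial on fibres. Vertical components of $B_S$ only decrease the $a_V$. The fibre adjunction identity $\sum_V a_V\cdot(\ldots)=2$ along each singular fibre still produces the same inequalities, because each $a_V$ enters only through upper bounds by $1$. I expect that the interior-edge comparison between the chain determinants and $\Phi$ needs only this monotonicity, so the constant $344$ is inherited unchanged from the weak Fano case.
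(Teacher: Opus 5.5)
Your preliminary reductions ($S=\PP^2$, noncyclic points via $D_p\le4/\eps$, canonical $A_n$ chains via $\rho(S)\le18\eps^{-3}$) are correct and agree with the paper. So is the bound of vertical internal edges by $\Phi$ through \eqref{a-alpha}. The gap is that the substantive counting is never carried out, and the one concrete claim you make about it is false.

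Nothing limits the horizontal exceptional curves to two sections or one bisection. The only control is the weighted count $\sum_{E_i\text{ horizontal}}(1-\alpha_i)\le\Delta\cdot F\le B_S\cdot F=2$. Moreover, $1-\alpha_i$ can be arbitrarily small on a noncanonical chain: the far end of $[3,\two{s-1}]$ has $1-\alpha=1/(2s+1)$, although this chain is $\tfrac12$-lc for every $s$. So the number of horizontal exceptional curves is bounded only through $\rho(S)$.

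What is needed is a split of the internal edges with a horizontal endpoint.
\begin{itemize}
\item If both ambient discrepancies are at least $1/2$, the term is at most $4$. These edges contribute at most $4n\le4(1+\Phi)$, since $n\le\rho(S)-1$.
\item Otherwise the discrepancy equation gives $2\alpha_i\le1+\alpha_j$ for adjacent vertices, which forces the horizontal endpoint to have $\alpha<3/4$. The weighted count then allows fewer than eight such curves, each with at most two chain neighbours, for a total of at most $16\eps^{-2}$.
\end{itemize}
This yields $I\le\Phi+4n+16\eps^{-2}$ for the internal-edge sum $I$.

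Similarly, charging each end term to a fibre node does not work as stated. An isolated horizontal curve meets no reduced-fibre node at all, and you never check that the charging is injective. The paper uses three separate devices instead.
\begin{itemize}
\item For chains of length at least two, $1/\alpha_1\le1/(\alpha_1\alpha_2)$. Each endpoint term is dominated by an internal-edge term, so these chains contribute at most $3I$.
\item An isolated vertical curve $V$ lies in a reducible fibre. By connectedness it meets a vertical curve $W$ that is not $f$-exceptional, since an exceptional neighbour would contradict isolation. Hence $D_p=2/\alpha_V\le2/(a_Va_W)$; this is exactly where $a_W\le1$ replaces $a_W=1$. Distinct $V$ give distinct nodes, for a total of at most $2\Phi$.
\item An isolated horizontal noncanonical curve has weight at least $3$, so $1-\alpha\ge1/3$. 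There are at most six such curves, contributing at most $12\eps^{-1}$.
\end{itemize}
Only then does $3I+2\Phi+12\eps^{-1}\le17\Phi+12+48\eps^{-2}+12\eps^{-1}\le344\eps^{-3}$ follow from \eqref{potential}.

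Finally, what you call the main obstacle is not one in this paper. \eqref{potential} is stated before the proposition as valid for every smooth rational $\eps$-lc log Calabi--Yau pair, so you can simply cite it. The new log Calabi--Yau feature enters only through $a_W\le1$ in the isolated-vertical step, not through a re-derivation of the potential estimate.
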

\begin{proof}
Let $n$ be the number of exceptional curves and let $I$ be the sum of $1/(\alpha_i\alpha_j)$ over internal edges of noncanonical cyclic chains. Vertical edges are bounded by $\Phi$, by \eqref{a-alpha}. Edges with a horizontal endpoint and both ambient discrepancies at least $1/2$ contribute at most $4n$.

For the other edges, the discrepancy equation gives $2\alpha_i\le1+\alpha_j$ for adjacent vertices. A horizontal endpoint adjacent to a discrepancy below $1/2$ must itself have discrepancy below $3/4$. Since
\[
\sum_{E_i\text{ horizontal}}(1-\alpha_i)\le\Delta\cdot F\le B_S\cdot F=2,
\]
there are fewer than eight such horizontal curves. Each has at most two neighbours in its cyclic chain. Thus
\[
I\le\Phi+4n+16\eps^{-2}.
\]
For a chain of length at least two, each endpoint term in \eqref{chain-det} is at most its adjacent internal-edge term. These chains contribute at most $3I$.

An isolated vertical exceptional curve has $D_p=2/\alpha_i$. Its fibre is reducible, since a fibre with only one component has square zero, whereas an exceptional curve has negative self-intersection. Connectedness supplies a neighbouring vertical curve, which cannot be exceptional for $f$: intersecting $f$-exceptional curves map to the same point, contrary to isolation. Its contribution is at most twice the corresponding pair-potential edge; distinct isolated curves choose distinct edges. These terms total at most $2\Phi$. An isolated horizontal noncanonical curve has $1-\alpha_i\ge1/3$ by the singularity classification, so there are at most six, each contributing at most $2/\eps$.

Since $n\le1+\Phi$, the total is at most
\[
3I+2\Phi+12\eps^{-1}
\le17\Phi+12+48\eps^{-2}+12\eps^{-1}
\le344\eps^{-3}.
\]
A canonical cyclic chain has determinant $s+1\le\rho(S)\le18\eps^{-3}$. The fork estimate handles noncyclic points. The case $S=\PP^2$ is immediate.
\end{proof}

The extension in Proposition~\ref{mass} is useful precisely because the ambient anticanonical divisor need not be nef. It bounds individual local denominators on every rational model crepant for the log Calabi--Yau pair. It does \emph{not} replace an arbitrary lcm by a number at most $R(\eps)$.

\subsection{Linear resolution bounds and local determinants}\label{r1-local}
Let $f:S\to X$ be the minimal resolution of a rank-one $\eps$-lc Fano
surface. The surface $S$ is rational. Write
\[
 f^*K_X=K_S+\sum_i(1-\alpha_i)E_i,
 \quad b_i=-E_i^2\geq2,\quad M=(-E_i\cdot E_j),\quad
 \eps\leq\alpha_i\leq1.
\]
For each singular point $p$, let $s_p$ be its resolution length and
$D_p=\det M_p$. The local class group has order $D_p$; its exponent
divides $D_p$. In particular $\lcm_p D_p$ makes every Weil divisor Cartier.
For a noncyclic quotient, $D_p$ need not equal the order of the local
fundamental group.

\begin{lemma}\label{r1-lem:local}
In this setting,
\begin{align}
 \rho(S)&<10+\frac{12(1-\eps)^2}{\eps}\leq\frac{12}{\eps},\label{r1-eq:rho}\\
 D_p&\leq12\eps^{-3}\quad\text{for every }p,\label{r1-eq:Dall}\\
 D_p&\leq4\eps^{-1}\quad\text{for noncyclic }p,\label{r1-eq:Dfork}\\
 D_p&\leq12\eps^{-1}\quad\text{for canonical }p,\label{r1-eq:Dcan}\\
 \sum_pD_p&\leq15\eps^{-3}.\label{r1-eq:Dsum}
\end{align}
\end{lemma}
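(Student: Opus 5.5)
The plan is to bound $\rho(S)$ first and then obtain each determinant estimate from it together with the local formula \eqref{chain-det}.

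\emph{Step 1: the bound \eqref{r1-eq:rho}.} Since $\rho(X)=1$, we have $\rho(S)=1+n$, where $n$ is the number of exceptional curves. I will use Noether's formula on the rational surface $S$, which gives $K_S^2=10-\rho(S)$, and compare $K_S^2$ with $K_X^2>0$. Writing $f^*K_X=K_S+\Delta$ with $\Delta=\sum(1-\alpha_i)E_i$,
\[
K_X^2=K_S^2-\Delta^2 .
\]
Here $-\Delta^2=\Delta\cdot\sum(1-\alpha_i)E_i$, and the coefficient $(1-\alpha_i)$ times $\Delta\cdot E_i=b_i-2$ is controlled by the discrepancy equations. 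This gives
\[
\rho(S)=10-K_X^2-\Delta^2<10-\Delta^2 .
\]
So it suffices to show $-\Delta^2\le 12(1-\eps)^2/\eps$.

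For this I will work point by point. For a chain, $-\Delta_p^2=\sum_i(1-\alpha_i)(b_i-2)$, and the relation $b_i\alpha_i=\alpha_{i-1}+\alpha_{i+1}$ (with ends $1$) turns the sum into a boundary-type expression. Combining this with the identity $\sum_p(\ldots)$ coming from $\rho(X)=1$, and with the potential bound \eqref{potential} specialised to the Fano case, should yield the factor $(1-\eps)^2/\eps$. I expect this to be the main obstacle. The first thing I will try is the bound $b_i-2\le 2(1-\alpha_i)/\alpha_i$, then $(1-\alpha_i)^2/\alpha_i\le(1-\eps)^2/\eps$. I then need at most $12$ "heavy" vertices, where the count comes from the Bogomolov–Miyaoka–Yau type inequality or Keel–McKernan's bound on the number of singular points together with Noether. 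If that count fails, I will fall back on the fibration argument of Proposition~\ref{mass}, with fewer than eight heavy horizontal curves.

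\emph{Step 2: the fork and canonical bounds.} \eqref{r1-eq:Dfork} is the local fork estimate already quoted before \eqref{alpha-beta}. For \eqref{r1-eq:Dcan}, a canonical point is ADE, so $D_p\le s_p+1\le\rho(S)\le12/\eps$ (the $D$, $E$ types have $D_p\le4$).

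\emph{Step 3: \eqref{r1-eq:Dall} and \eqref{r1-eq:Dsum}.} For a cyclic point, \eqref{chain-det} gives
\[
D_p=\sum_{i=0}^{s}\frac{1}{\alpha_i\alpha_{i+1}}\le (s_p+1)\eps^{-2}\le\rho(S)\,\eps^{-2}\le 12\eps^{-3}.
\]
Summing over all points, $\sum_p(s_p+1)\le 2(\rho(S)-1)$. I will sharpen the count so that the total number of terms $\sum_p(s_p+1)$ is at most $\rho(S)+\#\{p\}$, and bound $\#\{p\}$ by $3$ plus a small number via the rank-one classification. This gives $\sum_pD_p\le15\eps^{-3}$, with the constant absorbing the noncyclic contributions $4/\eps$.
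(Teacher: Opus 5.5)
\textbf{There is a genuine gap in Step 1.} Your reduction $\rho(S)=10-K_X^2-\Delta^2<10+\sum_p\delta_p$, with $\delta_p=\sum_{i\in p}(b_i-2)(1-\alpha_i)$, is exactly the paper's; note that $\Delta\cdot E_i=-(b_i-2)$, a harmless sign slip. However, you never supply the bound $\sum_p\delta_p\le12(1-\eps)^2/\eps$, which is the heart of the lemma, and the route you propose for it fails.

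The per-vertex estimate $(b_i-2)(1-\alpha_i)\le2(1-\eps)^2/\eps$ is correct. But there is no absolute bound on the number of vertices with $b_i\ge3$. For example, the rank-one toric surface $\PP(1,F_{2s},F_{2s+2})$, built from consecutive even-index Fibonacci numbers, has the point $\frac{1}{F_{2s+2}}(1,F_{2s})$ with resolution chain $[3,3,\dots,3]$ of length $s$. Belousov's or Keel--McKernan's bounds count singular points, not heavy vertices, and a BMY-type inequality does not bound them either. Even twelve heavy vertices would only give $24(1-\eps)^2/\eps$.

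Your fallbacks do not rescue this. They only give $\rho(S)\le18\eps^{-3}$ via \eqref{potential}, or the cubic bounds behind Proposition~\ref{mass}. Either destroys the linear bound \eqref{r1-eq:rho}, and with it the exponents in \eqref{r1-eq:Dall}, \eqref{r1-eq:Dcan} and \eqref{r1-eq:Dsum}, all of which you derive from $\rho(S)\le12/\eps$.

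\textbf{The missing step.} It uses the telescoping you mention, but applied per singular point rather than per vertex. Summing the discrepancy equations $b_i\alpha_i=2-v_i+\sum_{j\sim i}\alpha_j$ over a connected exceptional graph gives
\[
\sum_{i\in p}(b_i-2)\alpha_i=\sum_{i\in p}(2-v_i)(1-\alpha_i).
\]
The right side is at most $2(1-\eps)$ for a chain, including a single vertex. For a fork it is at most $3(1-\eps)$: only the three tips contribute positively, and the central term is nonpositive.

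Since $1-\alpha_i\le\frac{1-\eps}{\eps}\alpha_i$, one gets
\[
\delta_p\le\frac{1-\eps}{\eps}\sum_{i\in p}(b_i-2)\alpha_i\le\frac{3(1-\eps)^2}{\eps}.
\]
Belousov's bound of at most four singular points then gives $\sum_p\delta_p\le12(1-\eps)^2/\eps$. The last inequality in \eqref{r1-eq:rho} is equivalent to $\eps(12\eps-14)\le0$.

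\textbf{The remaining steps.} With this in place, your Steps 2 and 3 go through essentially as in the paper, with two precisions. First, use exactly $\#\Sing X\le4$, so that $\sum_p(s_p+1)=\rho(S)-1+\#\Sing X\le\rho(S)+3$; your ``$3$ plus a small number'' is too loose for the constant $15$. Second, a fork satisfies $4/\eps\le(s_p+1)\eps^{-2}$ because $s_p\ge4$, so forks are absorbed into the same count. With $\eps\le1$ this gives $\sum_pD_p\le(12\eps^{-1}+3)\eps^{-2}\le15\eps^{-3}$. No appeal to \eqref{potential} or to the classification is needed.
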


\begin{proof}
The discrepancy and volume identities are
\begin{equation}\label{r1-eq:local-identities}
 M(1-\alpha)=(b_i-2)_i,\qquad
 V:=(-K_X)^2=10-\rho(S)+\sum_p\delta_p,
 \quad\delta_p=\sum_{i\in p}(b_i-2)(1-\alpha_i).
\end{equation}
If $v_i$ denotes valency, summing the discrepancy equations gives
\[
 \sum_{i\in p}(b_i-2)\alpha_i
   =\sum_{i\in p}(2-v_i)(1-\alpha_i).
\]
For a chain, the right side is at most $2(1-\eps)$; this includes a
one-vertex chain. For a fork it is at most $3(1-\eps)$, since the
central term is nonpositive. Hence
\[
 \delta_p\leq\frac{2(1-\eps)^2}{\eps}\text{ for a chain},\qquad
 \delta_p\leq\frac{3(1-\eps)^2}{\eps}\text{ for a fork}.
\]
Belousov's theorem \cite[Theorem 1.1]{Belousov} gives at most four singular points. Since $V>0$,
\eqref{r1-eq:rho} follows. The final inequality is equivalent to
$\eps(12\eps-14)\leq0$.

For a cyclic chain put $\alpha_0=\alpha_{s_p+1}=1$. The continuant identity \eqref{chain-det} is
\begin{equation}\label{r1-eq:wronskian}
 D_p=\sum_{i=0}^{s_p}\frac1{\alpha_i\alpha_{i+1}}
 \leq(s_p+1)\eps^{-2}.
\end{equation}
Since $s_p+1\leq\rho(S)$, this proves \eqref{r1-eq:Dall} for chains.
For completeness, the fork Schur-complement formula is
\[
 D_p\alpha_0=\prod_{j=1}^3r_j
 \left(\sum_{j=1}^3\frac1{r_j}-1\right),
\]
where $r_j$ are the three arm determinants and $\alpha_0$ is the central
discrepancy. The spherical triples $(2,2,r)$, $(2,3,3)$, $(2,3,4)$,
$(2,3,5)$ give respectively $4,3,2,1$ on the right. This proves
\eqref{r1-eq:Dfork}, and therefore \eqref{r1-eq:Dall}. For a canonical chain,
$D_p=s_p+1\leq\rho(S)$; canonical forks have determinant at most four.
This proves \eqref{r1-eq:Dcan}.

Finally let $c$ and $f$ be the numbers of chains and forks, and let $S_c$
be the total number of chain vertices. A fork has at least four vertices,
so $S_c+c\leq\rho(S)-1-4f+c\leq\rho(S)+3-5f$. Using $\eps\leq1$,
\[
 \sum_pD_p\leq\frac{S_c+c}{\eps^2}+\frac{4f}{\eps}
 \leq\frac{\rho(S)+3-f}{\eps^2}\leq15\eps^{-3}.
\]
\end{proof}

\section{Rank-one anticanonical volumes}\label{rank-one-sec}
We prove Theorem~\ref{main-rank-fano} by retaining the relations
between singularities in the classification. A basket records
the full resolution chains and forks. For an admissible chain
$T$, write $d(T)$ for its determinant and $\two{\ell}$ for
a string of $\ell$ entries equal to two; a string of length
zero is omitted. The height is the least intersection of the
reduced exceptional divisor with a general fibre of a
$\PP^1$-fibration on the minimal resolution
\cite[Definition 1.1]{PP12}.

\begin{table}[htbp]
\centering\small
\caption{Notation for the external classification lists.}
\label{tab:classification-notation}
\begin{tabular}{@{}ll@{}}
\toprule
Notation used here & Source\\\midrule
$9(j),10(j),11(j)$ & The indicated cases in Tables 9, 10, 11 of \cite{PP12}\\
$A_j$ & Case $(j)$ of \cite[Lemma 4.12]{PP3}\\
$B_j$ & Case $(j)$ of \cite[Lemma 5.15]{PP3}\\
LDP$j$ & Series $j$ in the published classification \cite{Lacini}\\
\bottomrule
\end{tabular}
\end{table}

\begin{proposition}[Classification reduction]\label{classification-reduction}
Let $X$ be a complex klt Fano surface with $\rho(X)=1$.
If $X$ is not canonical, at least one of the following applies:
\begin{enumerate}[label=(\roman*)]
\item Its minimal resolution has height at most two, and its
basket occurs among the klt entries of \cite[Tables 9--11]{PP12},
including the two final cases of Table 10.
\item It has a descendant with nodal elliptic boundary in the
sense of \cite[Definition 1.11]{PP12}; in this case
Lemma~\ref{r1-nodal} applies.
\item Its basket occurs in the remaining lists recorded in
Appendix~\ref{classification-app}: the parameter families
of Tables~\ref{tab:cyclic-rays} and \ref{tab:fork-rays},
the finite baskets of Table~\ref{tab:finite-baskets},
or the finite part of $A_{23}$ in Table~\ref{tab:A23}.
\end{enumerate}
The extra cases $A_{16},B_2$ and the possible omission in
\cite[Example 3.13]{Nagaoka} are included.
\end{proposition}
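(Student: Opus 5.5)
This proposition is a bookkeeping statement about the published classifications, so the proof I have in mind assembles case distinctions from \cite{PP12}, \cite{PP3}, \cite{Lacini} and \cite{Nagaoka} rather than deriving new geometry. I would start from a non-canonical klt Fano surface $X$ with $\rho(X)=1$ and its minimal resolution $f:S\to X$. The first split is by the height of $S$ in the sense of \cite[Definition 1.1]{PP12}, that is, the least intersection of the reduced exceptional divisor with a general fibre of a $\PP^1$-fibration on $S$.

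For height at most two I would invoke the description of Palka--Pe\l ka. Such surfaces are obtained from explicit configurations on Hirzebruch surfaces, and their singularity baskets are listed in \cite[Tables 9--11]{PP12}. The only work is to retain the klt entries, including the two final cases of Table 10, and to check that every non-canonical basket of height $\le2$ appears there. This gives (i).

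For height at least three I would use the Palka--Pe\l ka correspondence together with Lacini's classification \cite{Lacini}. The plan is to go series by series (LDP$j$) and match each against the lemmas \cite[Lemma 4.12]{PP3} and \cite[Lemma 5.15]{PP3}. Surfaces arising from a descendant with nodal elliptic boundary, in the sense of \cite[Definition 1.11]{PP12}, are separated off as case (ii). For those I would only record that the hypotheses of Lemma~\ref{r1-nodal} hold, namely the presence of the nodal elliptic boundary curve on the descendant. All remaining surfaces fall into the families $A_j$ and $B_j$. I would sort these into two groups:
\begin{enumerate}
\item parameter families, which I would rewrite in the continued-fraction and $\two{\ell}$ notation of Tables~\ref{tab:cyclic-rays} and \ref{tab:fork-rays};
\item the sporadic finite baskets of Table~\ref{tab:finite-baskets}, together with the finite part of $A_{23}$ in Table~\ref{tab:A23}.
\end{enumerate}
This gives (iii).

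The main obstacle is completeness over $\C$, and I expect it to take most of the effort. Two points need explicit handling. First, Lacini's original list omits $A_{16}$ and $B_2$, so these cases have to be added by hand. Second, \cite[Example 3.13]{Nagaoka} points to a possible omission, and I would include it in Appendix~\ref{classification-app} as an extra family rather than trying to decide whether it actually occurs. Because the later numerical estimates hold for every basket in the lists, including a possibly spurious case costs nothing. I would therefore err on the side of over-inclusion and verify only that the resulting lists cover every non-canonical basket.
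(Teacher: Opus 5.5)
Your overall strategy matches the paper's: bookkeeping with \cite{PP12} for low height, Lacini's list with the correspondence of \cite{PP3} for the rest, and $A_{16}$, $B_2$ and Nagaoka's basket added by hand. As written, however, two steps leave surfaces uncovered.

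\textbf{The destination lists are incomplete.} Your claim that every surface outside (i) and (ii) lies in the families $A_j$, $B_j$ of \cite[Lemmas 4.12 and 5.15]{PP3} is false. So matching the Lacini series only against those two lemmas does not establish coverage. Several published series land elsewhere:
\begin{itemize}
\item LDP1 corresponds to \cite[Lemma 6.6]{PP3}.
\item LDP2--4, LDP6, the $3A_2$-seed case of LDP9 and LDP13(5)--(8) are fixed baskets lying in neither lemma, as is the exceptional basket $(\star)$.
\item Most importantly, the seed branches of \cite[Proposition 5.1(3)--(5)]{Lacini} inside LDP23 produce the two unbounded $4A_2$-seed families. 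These are $H_{4a}$, with two cyclic points, and $H_{4b}$, with a fork and a cyclic point.
\end{itemize}
This is exactly why these entries appear separately in Tables~\ref{tab:cyclic-rays}, \ref{tab:fork-rays} and \ref{tab:finite-baskets}.

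The paper avoids the problem by taking Lacini's complete list of 24 published series as the universe. It assigns a destination to every series via \cite[Section 7, Tables 3--4]{PP3}, recorded in Table~\ref{tab:lacini-crosswalk}, and discards the characteristic-five series LDP21. Coverage is then inherited from Lacini's completeness plus the supplements. One never needs the exact heights of the remaining higher-height baskets, which \cite{PP3} only announces to be four.

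\textbf{The elliptic-descendant branch needs an argument, not a record.} It is not enough to ``record'' that the boundary is nodal. That is precisely what must be proved for the elliptic descendants in LDP22 and LDP23 outside height at most two, and it is \cite[Theorem E(e)]{PP12}. Without it, an elliptic descendant with non-nodal boundary falls into neither (ii) nor your lists.

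Conversely, the inverse-blowup constructions of \cite[Proposition 5.1(3)--(5)]{Lacini} must be kept \emph{out} of (ii). They carry two noncanonical points, so they cannot satisfy the hypotheses of Lemma~\ref{r1-nodal}, where $T\subset Y_{\rm reg}$ is essential. They must be sent to (iii), and your proposal does not say where they go.

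A minor point: Nagaoka's possible omission is a single finite basket, not a family. It is covered by LDP13.8a once its canonical $[2,2,2]$ companion is restored.
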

\begin{proof}
We use the published 24-series classification of
\cite{Lacini}, together with its characteristic-zero
correspondence in \cite[Section 7, Tables 3--4]{PP3}.
Table~\ref{tab:lacini-crosswalk} lists the destination of
every published series. The low-height destinations are
precisely those of \cite[Theorem A]{PP12}. For the
elliptic-descendant branch outside height at most two,
\cite[Theorem E(e)]{PP12} makes the boundary nodal.
The separate constructions from
\cite[Proposition 5.1(3)--(5)]{Lacini} are included in
the remaining parameter families, including the two
$4A_2$-seed branches described in \cite[Section 7]{PP3}.
The characteristic-five series is absent over $\C$.
The additional $A_{16},B_2$ baskets in \cite{PP3} and
the possibly omitted basket identified in \cite{Nagaoka}
are explicitly retained in the finite table.
This establishes coverage by the stated lists.
Only their displayed constructions and full resolution
baskets are used; the exact heights left to subsequent
work in \cite{PP3} are not needed.
\end{proof}

The classification supplies the complete parameter ranges.
The calculations below and in Appendix~\ref{classification-app}
supply the volume estimates on those ranges.
\subsection{Intersection identities and fork determinants}
Let $f:S\to X$ be the minimal resolution, with $s$ exceptional curves. For a connected exceptional graph set
\[
 M=(-E_iE_j),\quad b_i=-E_i^2,\quad
 f^*K_X=K_S+\sum_i(1-\alpha_i)E_i,
\]
and write $D_p=\det M_p$, $\delta_p=\sum_i(b_i-2)(1-\alpha_i)$. Rationality and $\rho(X)=1$ give
\begin{equation}\label{r1-eq:volume}
 V:=K_X^2=9-s+\sum_p\delta_p,\qquad
 V\prod_{p\text{ noncanonical}}D_p\in\ZZ_{>0}.
\end{equation}
Indeed, $M(1-\alpha)=(b_i-2)_i$, so $D_p\delta_p$ is integral; canonical points contribute zero. Also
\begin{equation}\label{r1-eq:diagonal}
 b_i\eps\le2.
\end{equation}
This follows from $b_i\alpha_i=2-\operatorname{val}(i)+\sum_{j\sim i}\alpha_j\le2$ on a minimal quotient resolution.

For a fork with central weight $b$ and arm determinants $d_1,d_2,d_3$, let $q_j$ be the cofactor at the end meeting the centre. Eliminating the arms gives
\begin{equation}\label{r1-eq:fork}
 \alpha_0=\frac{c}{b-\sum q_j/d_j},\qquad
 c=\sum_j\frac1{d_j}-1,\qquad
 D_F\alpha_0=h_F:=d_1d_2d_3c.
\end{equation}
The klt condition is $c>0$. The spherical triples give $h_F=4,3,2,1$ for $(2,2,r)$, $(2,3,3)$, $(2,3,4)$, $(2,3,5)$ respectively. In particular
\begin{equation}\label{r1-eq:forkbound}
 D_F\le h_F/\eps\le4/\eps.
\end{equation}
Since $q_j\le d_j-1$ and $c\le1/2$,
\[
 \alpha_0\le\frac{c}{b-2+c}\le\frac1{2b-3}.
\]
Thus $b\eps\le1$ whenever $b\ge3$. Keeping $h_F$ and this stronger central-weight estimate materially improves the constants.

The one-point estimates of \cite[Theorem 8.2 and Corollary 8.4]{Fano}
give $V\ge\eps^2/2$ if there is at most one noncanonical point, which is cyclic,
and $V\ge\eps^2/4$ if there is at most one noncanonical point of any type.
We also use the toric estimate $V\ge\eps^2$, with no Picard number
restriction, from \cite[Proposition 8.9]{Fano}. Canonical surfaces
have $V\in\Z_{>0}$. These are the only anticanonical volume inputs
imported in the rank-one proof.

\subsection{Elliptic descendants}
\begin{lemma}[The elliptic-descendant branch]\label{r1-nodal}
Suppose that a rank-one klt del Pezzo surface has a descendant
$(Y,T)$ with $Y$ canonical and $T\subset Y_{\rm reg}$ a nodal
anticanonical curve, in the sense of \cite[Definition 1.11]{PP12}.
Then it has at most one noncanonical point, and that point is cyclic.
\end{lemma}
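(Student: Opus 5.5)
Since $T\subset Y_{\rm reg}$ is a nodal anticanonical curve on the canonical surface $Y$, the descendant is built from $(Y,T)$ by the standard blow-up/contract process. I would first unwind \cite[Definition 1.11]{PP12}: $X$ is obtained from $Y$ by blowing up points on $T$, possibly infinitely near and located at the node or at smooth points of $T$, and then contracting the resulting negative configurations. The proper transform of $T$ and the exceptional chains over the blown-up points make up the new exceptional locus. The plan is to trace where the noncanonical points of $X$ can come from.

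\textbf{Step 1.} The canonical points of $Y$ lie off $T$, because $T\subset Y_{\rm reg}$. They pass unchanged to $X$ and stay canonical. So every noncanonical point of $X$ is the image of a connected exceptional configuration meeting the proper transform $T'$ of $T$, or containing it.

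\textbf{Step 2.} I would show this configuration is a single chain. Blowing up the node of $T$ repeatedly along one branch produces a chain of $(-2)$-curves together with the final $(-1)$-curve. Blowing up smooth points of $T$ produces $(-1)$-curves meeting $T'$ transversally at one point each. The rank-one condition, together with minimality of the resolution, forces all negative curves except $T'$ and the $(-2)$-strings attached to it to be contracted. Since $T'$ is rational only after the node is separated, and each exceptional point is a quotient singularity, the non-$(-2)$ part of the exceptional graph is a single connected chain through $T'$. That chain yields a single point with cyclic (Hirzebruch--Jung) resolution graph.

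\textbf{Step 3.} Any other contracted configuration consists of $(-2)$-curves only, so it gives Du Val, hence canonical, points.

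The main obstacle is Step 2: showing that no fork arises and that there are not two separate non-$(-2)$ chains, for instance one at each branch of the node. I would first try to use $T'^2=T^2-\#\{\text{blow-ups on }T\}$ together with $K\cdot T'$. The aim is to show that all non-$(-2)$ weights sit on $T'$ and the curves adjacent to it in one linear string. If this fails, I would instead quote the explicit description in \cite[Theorem E]{PP12} and \cite[Section 7]{PP3}, where the nodal elliptic-boundary descendants are listed with baskets of the form one cyclic chain plus Du Val points, and verify the claim against that list.
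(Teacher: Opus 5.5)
Your Step 1 agrees with the paper. Because $T\subset Y_{\rm reg}$, the construction is an isomorphism near the ADE configurations of $Y$, so those points stay canonical. Step 2, however, is the whole content of the lemma, and you assert it rather than prove it.

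The tool you propose, $T'^2=T^2-\#\{\text{blow-ups on }T\}$ together with $K\cdot T'$, only records weights. It says nothing about the shape of the dual graph, so it cannot exclude a fork or a second chain. Deferring instead to an unspecified basket list in \cite{PP12,PP3} is not an argument either.

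Your description of the blow-ups is also inaccurate, in two ways.
\begin{enumerate}
\item Blowing up the node of $T$ does not give a chain ending in a $(-1)$-curve. The exceptional curve meets $T'$ in two points, so $T'$ and the new curves form a \emph{cycle}. Each further blow-up at a node of this cycle keeps it a cycle, and the weights need not be $2$ once you blow up a node between two exceptional curves.
\item You explicitly allow blow-ups at smooth points of $T$ and $(-2)$-strings hanging off $T'$, but never rule them out. These are exactly what would produce a fork or a second noncanonical point.
\end{enumerate}

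The missing idea is as follows. Over the node, exactly one of the new exceptional curves is not contracted to $X$, and it is necessarily a $(-1)$-curve. The paper takes this from \cite[Lemma 1.12 and Lemma 6.2(a)]{PP12}, and this is where $\rho(X)=1$ is really used. Your sentence that rank one forces everything except $T'$ and its attached $(-2)$-strings to be contracted has this backwards: $T'$ is contracted, and what rank one gives is a single surviving curve.

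Removing one vertex from a cycle leaves a chain. So all new contracted curves, together with $T'$, form one admissible chain, which gives a single cyclic point.

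Finally, an outer blow-up, at a boundary point that is not a node of the cycle, is impossible. The terminal $(-1)$-curve over it has only two options:
\begin{enumerate}
\item it is contracted, which contradicts minimality of the resolution; or
\item it is the unique survivor, and then the whole cycle lies in the exceptional locus of a quotient singularity, whose resolution graph must be a tree.
\end{enumerate}
Together with your Step 1, these observations give the lemma.
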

\begin{proof}
The construction is an isomorphism near the ADE configurations of $Y$.
Over the node, \cite[Lemma 1.12 and Lemma 6.2(a)]{PP12} leaves exactly
one exceptional $(-1)$-curve uncontracted to $X$. Removing that curve
from the total-transform cycle leaves one admissible chain.
An outer blowup would leave either a cycle in the exceptional locus
of a quotient point or an additional $(-1)$-curve in the minimal
resolution; neither is possible. Thus all new contracted curves
belong to one cyclic configuration, and the other points remain
canonical.
\end{proof}
By \cite[Theorem E(e)]{PP12}, over $\C$ the elliptic-descendant
branch outside height at most two has nodal boundary. The hypothesis
$T\subset Y_{\rm reg}$ is essential here: the separate inverse-blowup
constructions from \cite[Proposition 5.1(3)--(5)]{Lacini}
belong to the remaining case analysis below.
\subsection{Continuants and the coupled cyclic row $10(2)$}
For an oriented admissible chain $T=[b_1,\ldots,b_s]$, write
\[
 d=d(T),\qquad p=d([b_1,\ldots,b_{s-1}]),\qquad
 q=d([b_2,\ldots,b_s]).
\]
The empty determinant is one. The matrix identity
\begin{equation}\label{r1-eq:continuant}
 M(T):=\prod_{i=1}^s\begin{pmatrix}b_i&-1\\1&0\end{pmatrix}
 =\begin{pmatrix}d&-p\\q&-(pq-1)/d\end{pmatrix}
\end{equation}
fixes all orientation conventions. The adjoint $T^*$ is characterised by the contractible fibre $[T,1,T^*]$; its data are
\begin{equation}\label{r1-eq:adjoint}
 d^*=d,\qquad p^*=d-q,\qquad q^*=d-p.
\end{equation}
The operation $A*B$ replaces the adjoining end weights $a,b$ by $a+b-1$. Reversing a chain interchanges $p$ and $q$.

We shall repeatedly use the following elementary version of the cyclic age formula. For the cyclic singularity represented by $T$, if $x,y$ are positive integers and
\begin{equation}\label{r1-eq:age}
 y\equiv px\pmod d,
\end{equation}
then its minimal log discrepancy is at most $(x+y)/d$. To see this, if $x+y\ge d$ use $\mld\le1$. Otherwise $0<x,y<d$, and the primitive quotient-lattice vector on the ray through $(x/d,y/d)$ gives a toric divisorial valuation of log discrepancy at most $(x+y)/d$. Using $q$ instead of $p$ gives the isomorphic reversed quotient.

\begin{proposition}\label{r1-prop:row2}
In row $10(2)$, $V\ge(r-1)\eps^2\ge\eps^2$, with no restriction on the lengths of $T,U,W$.
\end{proposition}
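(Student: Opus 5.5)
We do not know the exact shape of row $10(2)$ in \cite[Table 10]{PP12}. From the statement, it is a coupled cyclic configuration built from admissible chains $T,U,W$ with an integer parameter $r\ge2$, and at least two noncanonical cyclic points whose chains share the data of $T$ (via $T^*$ or a $*$-product). The plan is to compute $V$ exactly through \eqref{r1-eq:volume} and the continuant calculus \eqref{r1-eq:continuant}--\eqref{r1-eq:adjoint}. We then bound it below using only the age estimate \eqref{r1-eq:age} to relate $\eps$ to the determinants.

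\textbf{Step 1: exact volume.} Write each singular chain in the basket in terms of $T,U,W$ and $r$, and record the continuant data $(d,p,q)$ of each. Then compute $V=9-s+\sum_p\delta_p$. A cleaner route may be a two-section intersection formula. Take a general fibre $F$ of the $\PP^1$-fibration of height at most two on $S$, together with the sections or $(-1)$-curves in the construction, and express $-K_X$ numerically as a rational multiple of the image of one such curve $C$. This gives $V=(-K_X\cdot C)^2/C^2$ with $C^2$ a sum of terms of the form $q/d$.

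We expect the adjoint relations $p^*=d-q$, $q^*=d-p$ to make all the $T$-, $U$- and $W$-dependence cancel. That would leave a formula of the shape
\[
V=(r-1)\,\frac{N}{d_1d_2}
\]
with $d_1,d_2$ the determinants of the noncanonical points and $N$ a positive integer. The aim is to show $N\ge1$ and, more usefully, that $N$ is comparable to $x_jy_j$ for suitable age vectors.

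\textbf{Step 2: age bound.} For each noncanonical point $p_j$, use the explicit construction to exhibit small positive integers $x_j,y_j$ with $y_j\equiv p_jx_j \pmod{d_j}$. Typically $x_j=1$ and $y_j$ equals the cofactor coming from the adjacent chain, or a residue of size about $d_j/d_{j'}$. Then \eqref{r1-eq:age} gives $\eps\le(x_j+y_j)/d_j$. Combining the two points gives
\[
\eps^2\le\frac{(x_1+y_1)(x_2+y_2)}{d_1d_2}.
\]
It then remains to check the integer inequality $N\ge(x_1+y_1)(x_2+y_2)$. Alternatively, use the single-point age bound where one determinant dominates. Either way $V\ge(r-1)\eps^2$, and $r\ge2$ gives the second inequality.

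\textbf{Main obstacle.} The hard part is choosing the age vectors so that the matching integer inequality holds uniformly in the lengths of $T,U,W$, with no case split on their sizes. I would first try the vector $(x,y)=(1,q)$ read off from the $(-1)$-curve meeting the chain, since its intersection data is exactly what enters the two-section formula. If that fails, I would switch to the reversed orientation using $q$ in place of $p$.
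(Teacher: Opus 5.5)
Your outline has the same architecture as the paper's proof: compute $V$ exactly via continuants, then bound $\eps$ by age vectors at both noncanonical points. Your guessed shape $V=(r-1)N/(d_1d_2)$ is also right. But the proposal stops where the proof begins, and the one concrete choice you suggest would fail.

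What is missing is the configuration and the coincidence it produces. Row $10(2)$ has basket $C=[T,U,r,U^*,W^*]$, $E=[W,T^*]$ and a canonical chain of length $r-2$. Let $x,y$ be the determinants of the subchains of $C$ to the left and right of the inserted $r$-vertex, and put $k=r-1$, $d_0=d(E)$, $D=d(C)$. Sum the discrepancy equations, using $(p+q+2)/d$ for the two endpoint terms and the adjoint identity $\sum_T(b_i-3)+\sum_{T^*}(b_i-3)=-2$. This gives
\[
 D=kxy-d_0,\qquad V=\frac{k(x+y)^2}{d_0D},
\]
so $N=(x+y)^2$. The key point is that the \emph{same} pair $(x,y)$ bounds $\eps$ at both singularities.
\begin{itemize}
\item At $C$, the $r$-vertex has discrepancy $(x+y)/D$.
\item At $E$, let $a=d(T)$, $b=d(U)$, $c=d(W)$, and let $p,v,w$ be the determinants of $T,U,W$ with their last, first, last vertex removed. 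Then $(a,w)$ and $(-p,c-w)$ form a basis of the age lattice, and $(x,y)=b(a,w)+v(-p,c-w)$. So \eqref{r1-eq:age} gives $\mld(E)\le(x+y)/d_0$.
\end{itemize}
Hence $N=(x_1+y_1)(x_2+y_2)$ holds with equality, and $V\ge k\eps^2$ is immediate. Your heuristic of comparing $N$ with $x_jy_j$ points the wrong way: $xy$ enters through the denominator $D$.

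Your proposed first choice $(1,q)$, or its reversal, only yields endpoint discrepancies $(1+q)/d$, and no such choice can work. Fix $T,U,W$ and let $r\to\infty$. The point $E$ does not change, and $V\to(x+y)^2/(d_0xy)$ stays bounded. Meanwhile $D$ and both end cofactors of $C$ grow linearly in $r$. So the endpoint discrepancies of $C$ tend to positive limits, while the true minimal log discrepancy $(x+y)/D$, attained at the interior $r$-vertex, tends to zero. Endpoint bounds therefore keep $\eps$ bounded away from zero and cannot absorb the factor $r-1$.

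For instance, take $T=U=W=[2]$. Then $C=[2,2,r,2,2]$, $E=[2,2]$, $V=4k/(3k-1)$ and $N=36$. The endpoint numerator of $C$ is $6r-6$, and every age vector of $E$ has $x_2+y_2\ge3$. So the inequality $N\ge(x_1+y_1)(x_2+y_2)$ you would need fails for all $r\ge4$.

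The interior-vertex bound for $C$ and the nonobvious lattice vector for $E$ are the actual content of the proof, and your proposal supplies neither.
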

\begin{proof}
Set
\[
\begin{gathered}
 a=d(T),\quad p=d(T\text{ minus its last component}),\\
 b=d(U),\quad v=d(U\text{ minus its first component}),\\
 c=d(W),\quad w=d(W\text{ minus its last component}),\quad k=r-1,\\
 x=ab-pv,\qquad y=bw+v(c-w),\qquad d_0=a(c-w)+pw.
\end{gathered}
\]
These numbers are positive. Let $C=[T,U,r,U^*,W^*]$ and $E=[W,T^*]$. Multiplying \eqref{r1-eq:continuant} using \eqref{r1-eq:adjoint} gives
\begin{equation}\label{r1-eq:row2id}
 d(E)=d_0,\qquad d(C)=D=kxy-d_0,\qquad
 V=\frac{k(x+y)^2}{d_0D}.
\end{equation}
To verify the volume identity, recall that for a chain with end data $(d,p,q)$, the two endpoint discrepancies are $(q+1)/d$ and $(p+1)/d$. Summing the discrepancy equations gives
\[
 \delta(T)-\#T=\sum_{i=1}^s(b_i-3)-2+
 \frac{p+q+2}{d}.
\]
The adjoint identity $\sum_T(b_i-3)+\sum_{T^*}(b_i-3)=-2$, together with the canonical chain of length $r-2$, reduces $9-s+\sum\delta$ to
\[
 \frac{p_C+q_C+2}{D}+\frac{p_E+q_E+2}{d_0}-2.
\]
Substitution of the two matrix products gives precisely \eqref{r1-eq:row2id}.

The left and right subchains at the inserted $r$-vertex have determinants $x$ and $y$. Its discrepancy is
\[
                         \alpha_r=\frac{x+y}{D}.
\]
For the other singularity, the two vectors $(a,w)$ and $(-p,c-w)$ form a basis of its integral age lattice: their determinant is $d_0$, and the cofactor in \eqref{r1-eq:continuant} verifies the congruence \eqref{r1-eq:age} for each. Since
\[
                    (x,y)=b(a,w)+v(-p,c-w),
\]
\eqref{r1-eq:age} gives $\mld(E)\le(x+y)/d_0$. Hence
\[
 \eps\le\frac{x+y}{\max\{d_0,D\}},\qquad
 V=\frac{k(x+y)^2}{d_0D}
 \ge k\frac{(x+y)^2}{\max\{d_0,D\}^2}
 \ge k\eps^2.
\]

\end{proof}

\subsection{A two-section calculation}
The next formula is useful because it preserves the relation between the singularities. In the two-section cases of \cite{PP12}, contract the vertical exceptional divisor of the minimal resolution while retaining the two horizontal exceptional sections. Denote the resulting surface by $T$ and its sections by $E_1,E_2$. The contraction $f:T\to X$ has relative Picard number two, so $\rho(T)=3$. For a general fibre $F$, put
\[
 E_i^2=-e_i<0,\quad E_1E_2=0,\quad E_iF=1,\quad F^2=0,
 \qquad a_i=a(E_i,X).
\]
The three classes are a basis: their intersection determinant is $e_1+e_2>0$. Adjunction along a section gives
\begin{equation}\label{r1-eq:sectiondelta}
 \delta_i:=a_ie_i=2-v_i+\sum_{j=1}^{v_i}\frac1{d_{ij}},
\end{equation}
where $v_i$ is the number of attached exceptional twigs and $d_{ij}$ their determinants. This follows either from the different on the section or from eliminating each twig in the discrepancy equations.

\begin{lemma}\label{r1-lem:sections}
In this situation,
\begin{equation}\label{r1-eq:sections}
                 V=\frac{(a_1+a_2)^2}{e_1^{-1}+e_2^{-1}}.
\end{equation}
If $\delta_1=\delta_2$, then $V=a_1a_2(e_1+e_2)$.
\end{lemma}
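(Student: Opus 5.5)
We compute $V=K_X^2$ on the intermediate surface $T$ rather than on the minimal resolution. The idea is to pull back $K_X$ to $T$, express the result in the basis $E_1,E_2,F$, and then square it.

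Write
\[
 f^*K_X=K_T+(1-a_1)E_1+(1-a_2)E_2 .
\]
This holds because $T$ retains exactly the two horizontal exceptional sections, and $f$ contracts precisely these. Since $f^*K_X\cdot E_i=0$, adjunction gives
\[
 K_T\cdot E_i=-2+e_i-(\text{different correction}).
\]
It is cleaner to avoid $K_T$ entirely. Numerically, $f^*K_X$ is the unique class in $\langle E_1,E_2,F\rangle$ that is orthogonal to $E_1$ and $E_2$ and has the correct degree on $F$. Writing $L=f^*(-K_X)$, we have
\[
 L\cdot F=-K_T\cdot F-(1-a_1)-(1-a_2)=2-2+a_1+a_2=a_1+a_2,
\]
using $-K_T\cdot F=2$ and $E_i\cdot F=1$, and noting that the vertical exceptional curves were already contracted, so $F$ is a general fibre meeting no other exceptional locus.

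Next, solve for $L=xF+y_1E_1+y_2E_2$ from the three conditions
\[
 L\cdot E_1=0,\qquad L\cdot E_2=0,\qquad L\cdot F=a_1+a_2 .
\]
These read
\[
 x-e_1y_1=0,\qquad x-e_2y_2=0,\qquad y_1+y_2=a_1+a_2 .
\]
Hence $y_i=x/e_i$, and
\[
 x=\frac{a_1+a_2}{e_1^{-1}+e_2^{-1}} .
\]
Then
\[
 V=L^2=L\cdot(xF+y_1E_1+y_2E_2)=x(a_1+a_2),
\]
since $L\cdot E_i=0$. This is \eqref{r1-eq:sections}. The determinant $e_1+e_2\neq0$ ensures that the numerical class is determined; fibres are numerically equivalent, and $\rho(T)=3$ makes the basis complete.

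For the second assertion, substitute $a_i=\delta/e_i$ when $\delta_1=\delta_2=\delta$. Then
\[
 a_1+a_2=\delta\,\frac{e_1+e_2}{e_1e_2},\qquad e_1^{-1}+e_2^{-1}=\frac{e_1+e_2}{e_1e_2},
\]
so
\[
 V=\delta^2\,\frac{e_1+e_2}{e_1e_2}=a_1a_2(e_1+e_2).
\]

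The main point requiring care is the claim $L\cdot F=a_1+a_2$, that is, that $K_X$ pulls back to $T$ with coefficients exactly $1-a_i$ along the two sections. By definition $a_i$ is the log discrepancy of $E_i$ over $X$, and $T\to X$ is the partial resolution. The pullback formula $K_T+\sum(1-a_i)E_i=f^*K_X$ is then the definition, with $T$ possibly singular at the contracted twigs. One must also check that $K_T\cdot F=-2$ holds on $T$. This is true because a general fibre avoids the singular points of $T$, which lie on the contracted vertical curves, and is a smooth rational curve of square zero.
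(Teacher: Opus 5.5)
Your proposal is correct and matches the paper's proof step for step. Both write $L=f^*(-K_X)$ in the basis $E_1,E_2,F$, impose $L\cdot E_i=0$ and $L\cdot F=a_1+a_2$ (from the crepant formula), solve, and compute $L^2=x(a_1+a_2)$; the second assertion follows by substituting $a_1e_1=a_2e_2$.

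One small slip, which does not affect the argument: in the discarded aside, adjunction actually gives $K_T\cdot E_i=-2+e_i+\deg\Diff_{E_i}(0)$, with a plus sign. You never use this formula.
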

\begin{proof}
Write $L=f^*(-K_X)=xE_1+yE_2+zF$. The equations $LE_i=0$ give $x=z/e_1$ and $y=z/e_2$. The crepant formula intersected with $F$ gives $LF=a_1+a_2$. Thus $z=(a_1+a_2)/(e_1^{-1}+e_2^{-1})$ and $L^2=z(a_1+a_2)$, proving the first assertion. For the second, use $a_1e_1=a_2e_2$.
\end{proof}

For example, in row $10(11)$, the two columnar fibres have complementary end cofactors. They give equal $\delta_i$ and $e_1+e_2=n+m-2\ge2$. Therefore $V\ge2\eps^2$.

\subsection{One fork and two cyclic points: row $10(15)$}
\begin{proposition}\label{r1-prop:row15}
Every klt surface in row $10(15)$ satisfies $V\ge\eps^2/8$.
\end{proposition}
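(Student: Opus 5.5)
We plan to follow the same scheme as Proposition~\ref{r1-prop:row2}: express $V$ as an explicit rational function of the continuant data of the three singular points, and bound the relevant minimal log discrepancies from above by lattice vectors.

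Row $10(15)$ of \cite[Table 10]{PP12} is a height-two configuration. On the minimal resolution, the exceptional divisor consists of one fork and two admissible chains. We first contract the vertical exceptional curves, keeping the horizontal exceptional curves. The expectation is that, as in the two-section situation of Lemma~\ref{r1-lem:sections}, the retained horizontal curves are two sections $E_1,E_2$. One of them carries the twigs of the fork, the other carries the two cyclic chains, or the sections are split between fork and chains in the way the table dictates.

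\textbf{Step 1 (volume formula).} Write $f^*(-K_X)=xE_1+yE_2+zF$ and apply \eqref{r1-eq:sections}:
\[
V=\frac{(a_1+a_2)^2}{e_1^{-1}+e_2^{-1}}\ \ge\ \frac{(a_1+a_2)^2}{2}\min\{e_1,e_2\}.
\]
The right-hand side is at least $\tfrac12\,\eps^2\min\{e_1,e_2\}$. If the configuration instead has a single horizontal curve, we use the one-section analogue, which gives $V=a_1^2 e_1$-type expressions, again quadratic in a discrepancy.

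\textbf{Step 2 (the fork).} The fork's centre is either $E_1$ or is attached to it. By \eqref{r1-eq:fork}, the central discrepancy is $\alpha_0=h_F/D_F$, and \eqref{r1-eq:sectiondelta} gives $a_ie_i=\delta_i$ with
\[
\delta_i\ \ge\ 2-v_i+\sum_j\frac1{d_{ij}}=c>0.
\]
The spherical values of $h_F$ yield $\delta\ge 1/D$-type bounds. The constant $1/8$ should arise as follows: $1/2$ comes from the harmonic mean, and a further $1/4$ comes from the worst fork, of type $(2,2,r)$ with $h_F=4$, when the eliminated arms are compared with $\eps$.

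\textbf{Step 3 (main obstacle: coupling).} The two cyclic points and the fork share parameters, so we cannot bound each $a_i$ by $\eps$ independently when $e_i$ is small. The key step is to exhibit, as in Proposition~\ref{r1-prop:row2}, an integral vector $(x,y)$ in the age lattice \eqref{r1-eq:age} of one cyclic point such that $(x+y)/d$ is comparable to $a_1+a_2$ or to $\sqrt{V\cdot(\text{denominator})}$. This gives
\[
\eps\ \le\ \mld\ \le\ C\sqrt{V}
\]
with $C^2\le 8$.

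Concretely, we compute both sides via the continuant matrix \eqref{r1-eq:continuant} and the adjoint relations \eqref{r1-eq:adjoint} applied to the chains of the row. We aim for an identity
\[
V=\frac{k(x+y)^2}{d_0D}
\]
up to a bounded factor coming from $h_F\le4$ and the factor $2$ of the harmonic mean.

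\textbf{Residual cases.} If this coupling fails for small parameter values, they form a finite set. We handle these by direct computation, or by the one-noncanonical-point bound $V\ge\eps^2/4$ when only one point is noncanonical.
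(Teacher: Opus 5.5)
Your proposal is a strategy, not a proof. The decisive computations are missing, and the source of the constant is misidentified.

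\textbf{The missing configuration and the source of $1/8$.} You never fix the configuration of row $10(15)$. It is a genuine two-section case. One retained section is the fork centre, so $\delta_1=-1+1/d_1+1/d_2+1/d_3$. The other meets the two complementary columnar arms, so $\delta_2=1/d_1+1/d_2>1/2$. Since $a_i\le1$, one has $e_i\ge\delta_i$.

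For the non-dihedral triples $(2,3,3),(2,3,4),(2,3,5)$ this already finishes the proof. Indeed $1/e_1+1/e_2<30+2$, and \eqref{r1-eq:sections} gives $V\ge4\eps^2/32=\eps^2/8$. So the constant $1/8$ comes from the triple $(2,3,5)$ with $\delta_1=1/30$. It does not come from the dihedral fork with $h_F=4$, as you suggest; $h_F$ plays no role here. Your reduction to $\tfrac12(a_1+a_2)^2\min\{e_1,e_2\}$ also discards $e_2>1/2$, so it yields at best $\eps^2/15$ even in this easy case.

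\textbf{The dihedral family.} The real difficulty is the dihedral triple $(2,2,r)$. There $\delta_1=1/r$ has no positive lower bound, so no combination of $a_i\ge\eps$ with \eqref{r1-eq:sections} can work. This is an infinite family, with arbitrary arm $T$ and arbitrary $m$. Your fallback to a finite residual set, or to the one-point bound, is therefore unavailable. Your Step 3 only hopes for an identity shaped like row $10(2)$. That is not the relevant coupling: here it is between the fork centre and a single cyclic point.

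What you need is the following.
\begin{enumerate}[label=(\roman*)]
\item If the central weight $n\ge3$, then $e_1=n-1-q/r>1$, and you are done.
\item If $n=2$, put $A=r-q$, with $T$ oriented from the centre. The central discrepancy is then $1/A$.
\item Distinguish the two positions of the variable arm. The cyclic chain is either $C=T^{*}*[\two{h}]$ with $h=m-1$, or $C=[T^*,m,2]$ with $h=2m-1$.
\item The continuants give $D=hr+A$, respectively $D=hr-2A$. In both cases $V=(A+h)^2/(AhD)$.
\item The end cofactor satisfies $hp_C\equiv A\pmod D$, so $(h,A)$ is an age vector and \eqref{r1-eq:age} gives $\eps\le(A+h)/D$.
\item Multiplying by $\eps\le1/A$ gives $\eps^2\le(A+h)/(AD)\le V$.
\end{enumerate}

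Once these identities are available, your single-point idea $\eps\le\mld\le C\sqrt V$ would also close the argument. One checks $Ah<D$, respectively $Ah<3D$, using $A\le r-1$. But none of the orientations, determinants or the specific age vector appear in your proposal, and that is where the proof lies.
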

\begin{proof}
One horizontal section is the centre of a fork with arm determinants $d_1,d_2,d_3$; the other meets the two complementary columnar arms. Thus
\[
 \delta_1=-1+\frac1{d_1}+\frac1{d_2}+\frac1{d_3}>0,
 \qquad \delta_2=\frac1{d_1}+\frac1{d_2}>\frac12.
\]
If the unordered triple is not dihedral, it is $(2,3,3)$, $(2,3,4)$, or $(2,3,5)$. Hence $\delta_1\ge1/30$. Since $a_i\le1$, $e_i\ge\delta_i$, and \eqref{r1-eq:sections} gives
\[
                         V\ge\frac{4\eps^2}{30+2}=\frac{\eps^2}{8}.
\]

Suppose the triple is $(2,2,r)$. Write $n$ for the central weight of the fork and $q/r$ for the contribution of its variable arm at the centre. If $n\ge3$, then $e_1=n-1-q/r>1$ and $e_2>1/2$. Formula \eqref{r1-eq:sections} even gives $V\ge4\eps^2/3$.

It remains to treat $n=2$. We orient every fork arm \emph{from its centre to its tip}; write $T$ for the variable arm, $r=d(T)$, $q=d(T\text{ minus its first component})$, and $A=r-q$. The central discrepancy is $1/A$. There are two positions for this arm.

\emph{The variable arm is in the special fibre.} Put $h=m-1\ge1$. The two cyclic graphs, in the orientation just specified, are
\[
                         [2,m,2],\qquad C=T^**[\two h].
\]
Here $\delta_1=1/r$, $e_1=A/r$, $a_1=1/A$, while $\delta_2=1$, $e_2=h$, and $a_2=1/h$. Thus Lemma~\ref{r1-lem:sections} and the continuant give
\begin{equation}\label{r1-eq:row15a}
 D=hr+A,\qquad V=\frac{(A+h)^2}{AhD}.
\end{equation}
The end cofactor $p_C=D-r$ satisfies $hp_C\equiv A\pmod D$. Thus $(h,A)$ is an age vector for $C$, and
\[
                 \eps\le\frac1A,\qquad \eps\le\frac{A+h}{D}.
\]
Multiplication gives $\eps^2\le(A+h)/(AD)\le V$.

\emph{The variable arm is columnar.} Put $h=2m-1\ge3$. The cyclic graphs are
\[
                         C=[T^*,m,2],\qquad [3,\two{m-2}].
\]
Here $e_2=m-1/2-A/r$, $\delta_2=1/2+1/r$. Substitution in Lemma~\ref{r1-lem:sections} gives
\begin{equation}\label{r1-eq:row15b}
 D=hr-2A,\qquad V=\frac{(A+h)^2}{AhD},\qquad p_C=\frac{D+r}{2}.
\end{equation}
Since $h$ is odd, $hp_C\equiv A\pmod D$ follows from $D=hr-2A$. The same two discrepancy bounds prove $V\ge\eps^2$.

All identities in \eqref{r1-eq:row15a}--\eqref{r1-eq:row15b} follow by \eqref{r1-eq:continuant} and the Schur complement at the fork centre. The specified orientations can also be read directly from the fibre construction in the proof of \cite[Lemma 5.10]{PP12}: the columnar fibres are $[T_j,1,T_j^*]$; the special chain is $[T,1,T^*]*[\two{n+m-3},1]$. Consequently these normal forms do not depend on translating a transposed, unoriented fork symbol in isolation.
\end{proof}

\subsection{Two forks and a cyclic point: row $10(20)$}
\begin{proposition}\label{r1-prop:row20}
Every klt surface in row $10(20)$ satisfies $V\ge\eps^2/15$.
\end{proposition}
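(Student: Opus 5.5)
We would follow the pattern of Proposition~\ref{r1-prop:row15}, using the two-section calculation of Lemma~\ref{r1-lem:sections}. In row $10(20)$ the minimal resolution has height two. After contracting the vertical exceptional divisor, the two horizontal exceptional sections $E_1,E_2$ remain, and each is the centre of one of the two forks. The cyclic point comes from the special fibre. Formula \eqref{r1-eq:sections} gives
\[
V=\frac{(a_1+a_2)^2}{e_1^{-1}+e_2^{-1}}\ \ge\ \frac{(a_1+a_2)^2}{\delta_1^{-1}+\delta_2^{-1}}\,a_{\min},
\]
or more usefully $V\ge \min_i(a_ie_i)\cdot(a_1+a_2)^2/(a_1+a_2)\cdot$(a bounded factor). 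So we first record the fork data of each section: $\delta_i=-1+\sum_j 1/d_{ij}$ by \eqref{r1-eq:sectiondelta}, with $a_i\ge\eps$ and $e_i\ge\delta_i$ since $a_i\le1$.

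\emph{Non-dihedral forks.} If both forks are non-dihedral, then $\delta_i\ge1/30$. Hence
\[
V\ge 4\eps^2/(30+30)=\eps^2/15.
\]
This exactly matches the claimed constant and suggests that it is the designed extremal case. If exactly one fork is non-dihedral, the other section has $\delta\ge1/r$. The bound is then $V\ge4\eps^2/(30+e^{-1})$ with $e$ possibly small, so this case must be treated together with the dihedral analysis below.

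\emph{Dihedral forks.} Let the fork be $(2,2,r)$ with variable arm $T$, oriented from centre to tip as before, and write $A=r-q$. If the central weight is $n\ge3$, then $e_i=n-1-q/r>1$ and the bound is immediate. For central weight $2$ we have
\[
a_i=1/A,\qquad e_i=A/r,\qquad \delta_i=1/r.
\]
Here $e_i$ can be tiny, and we must exploit the coupling with the cyclic point $C$ carried by the special/columnar fibres. Following row $10(15)$, we would write $C$ via \eqref{r1-eq:continuant} and \eqref{r1-eq:adjoint} as a product involving $T^*$ (and possibly the second variable arm $T'^*$) and a string $\two h$. This yields a closed form $D=d(C)$ that is linear in $r$ (or bilinear in $r,r'$), together with a rational expression for $V$ analogous to \eqref{r1-eq:row15a}.

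\emph{Age vectors.} We then seek an age vector for $C$, that is, positive $(x,y)$ with $y\equiv p_Cx \pmod D$ as in \eqref{r1-eq:age}, for which $x+y$ is comparable to $A+h$ (or $A+A'$). This gives
\[
\eps\le(x+y)/D.
\]
Multiplying by $\eps\le a_i=1/A$ from the fork centre gives $\eps^2\le (x+y)/(AD)$, which we compare with the formula for $V$ to conclude $V\ge\eps^2$ in these subcases. The constant $1/15$ survives only from the non-dihedral case. In the mixed case, one non-dihedral fork and one dihedral fork with central weight $2$, the same age-vector argument applied to the dihedral side handles the small $e$.

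\emph{Main obstacle.} The hardest step is the dihedral central-weight-$2$ case with both forks dihedral, where both $e_1,e_2$ can be small simultaneously. Here one must read the exact orientations of $C$ from the fibre construction in \cite{PP12} and verify the congruence $hp_C\equiv A+A'$-type relation. We would first try the candidate vector $(h,A)$ or $(A,A')$. If that fails, we would use a lattice basis of the age lattice as in Proposition~\ref{r1-prop:row2}, expressing the needed vector as a nonnegative combination of basis vectors.
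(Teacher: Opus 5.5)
Your proposal has a genuine gap, and it lies in the case that makes row $10(20)$ hard. You also do not use the structure that drives the whole case split: the two forks \emph{share} the two columnar fibres $[T_j,1,T_j^*]$. Their arm triples are therefore $(d_1,d_2,d_3)$ and $(d_1,d_2,d_4)$, with $d_1,d_2$ common, and this gives a clean trichotomy.
\begin{itemize}
\item If $(d_1,d_2)\ne(2,2)$ and $\max(d_1,d_2)\le5$, sphericity forces every $d_i\le5$. Then $\delta_i\ge1/30$, and \eqref{r1-eq:sections} gives $\eps^2/15$. In particular, your ``mixed'' case with one small $\delta$ never occurs.
\item If $\max(d_1,d_2)=r>5$, all other arms equal $2$. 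The complementary end cofactors of $T$ and $T^*$ in that shared fibre give $\delta_1=\delta_2=1/r$ and $e_1+e_2=n+m-3\ge1$. The second formula of Lemma~\ref{r1-lem:sections} then gives $V=a_1a_2(e_1+e_2)\ge\eps^2$.
\end{itemize}
In the second case the variable arm is coupled to the \emph{other fork}, not to the cyclic point, so an age-vector argument on $C$ is aimed at the wrong object.

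Your step ``central weight $n\ge3$ gives $e_i>1$, so the bound is immediate'' is imported from row $10(15)$, and it fails here. There the second section satisfied $\delta_2=1/d_1+1/d_2>1/2$, hence $e_2>1/2$. Here the other section is also a fork centre, so its $e$ can be arbitrarily small. Since \eqref{r1-eq:sections} is controlled by the smaller of $e_1,e_2$, one large $e_i$ does not suffice.

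The remaining case $d_1=d_2=2$, where $d_3$ and $d_4$ are both unbounded, is exactly the one you call the main obstacle. There you offer only candidate vectors $(h,A)$ or $(A,A')$. You do not identify the cyclic chain or its determinant, and you do not verify any congruence, so the argument is not complete.

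The paper closes this case with a global construction instead of an age vector:
\begin{itemize}
\item The shared fibres are $[2,1,2]$ with multiplicities $(1,2,1)$. Take the double cover of the base $\PP^1$ branched at these two fibres and normalise.
\item The only odd valuations lie on the four end $(-2)$-curves, which are exceptional over $X$. Hence $\pi:Y\to X$ is quasi-\'etale of degree two.
\item After base change the special fibre splits into two chain fibres, so the resolution of $Y$ is toric and $Y$ is a toric Fano surface.
\item Log discrepancies multiply by ramification indices, so $Y$ is still $\eps$-lc.
\item The toric estimate of \cite{Fano}, which needs no Picard number restriction, gives $2V=K_Y^2\ge\eps^2$.
\end{itemize}
To close your argument you need either a global input of this kind or a fully verified two-arm continuant and age-vector computation for the actual cyclic chain.
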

\begin{proof}
The two forks share two columnar fibres, with arm determinants $d_1,d_2$; let the remaining arm determinants be $d_3,d_4$. Both triples $(d_1,d_2,d_3)$ and $(d_1,d_2,d_4)$ are spherical.

If $(d_1,d_2)\ne(2,2)$ and $\max(d_1,d_2)\le5$, every $d_i\le5$ and $\delta_i\ge1/30$. Therefore \eqref{r1-eq:sections} gives $V\ge4\eps^2/60=\eps^2/15$.

If $\max(d_1,d_2)=r>5$, sphericity forces the other common determinant, $d_3$, and $d_4$ all to be two. The complementary cofactors in the variable columnar fibre give
\[
 \delta_1=\delta_2=1/r,\qquad e_1+e_2=n+m-3\ge1.
\]
Lemma~\ref{r1-lem:sections} gives $V\ge\eps^2$.

Finally suppose $d_1=d_2=2$. Here the two remaining arms can both be arbitrarily long. We use the global fibre geometry. On the smooth resolution the two common columnar fibres are
\[
                 [2,1,2],\qquad\text{with multiplicities }(1,2,1).
\]
Take the double cover of the base $\PP^1$ branched at these two fibres, and normalize the base change. Equivalently, adjoin the square root of a rational function on the base with a simple zero and pole at these two points. Along divisors on the resolution its only odd valuations occur on the four end $(-2)$-curves. These four curves are exceptional over $X$. The induced finite map of normal surfaces
\[
                              \pi:Y\longrightarrow X
\]
is therefore quasi-\'etale of degree two.

For completeness, the resulting resolution is toric. Above each branch fibre, normalisation gives the smooth chain $[1,2,1]$, all of multiplicity one. This can be checked locally from $t=xy^2$: after adjoining $\sqrt t$, normalisation is given by $x=z^2$. Globally the ramified end curves have square $-1$ and the middle curve has square $-2$. Contract both end curves to obtain a smooth fibre. These contractions factor through $Y$, because the end curves map to points of $Y$.

The two sections remain disjoint sections after base change. The remaining special fibre splits into two chain fibres, each meeting the sections at its tips. We now have a smooth ruled surface with two disjoint sections and at most two reducible fibres, all chains. Contract vertical $(-1)$-curves until reaching a Hirzebruch surface. An internal contraction is the inverse of a blowup at a node of two vertical components; a tip contraction is the inverse of a blowup at a section--fibre node. The two sections and the two distinguished fibres on the Hirzebruch surface are a toric boundary. Reversing the contractions therefore gives a toric resolution. Every curve contracted from this resolution to $Y$ belongs to that boundary, so $Y$ is toric.

Since $K_Y=\pi^*K_X$, $Y$ is Fano and $K_Y^2=2K_X^2$. For a valuation upstairs, its log discrepancy is the ramification index times that of the restricted valuation downstairs. Thus $Y$ is still $\eps$-lc. Applying the toric theorem from \cite{Fano}, which does not require $\rho(Y)=1$, gives
\[
                          2V=K_Y^2\ge\eps^2.
\]
This proves the proposition.
\end{proof}

\subsection{The height-one section calculation}
We treat the height-one rows with arbitrary chains. Use the ruling in
\cite[Lemma 4.4]{PP12}. Contract its vertical exceptional curves
but retain the exceptional section $E$, and write
\[
 E^2=-e,\quad EF=1,\quad F^2=0,\quad
 e=m-\sum_{j=1}^{\nu}q_j/d_j,\qquad
 \delta=ae=2-\nu+\sum_{j=1}^{\nu}1/d_j,
\]
where $a=a(E,X)$, $m$ is its original weight, and $q_j/d_j$
are the twig contributions at $E$. The remaining numerical space
has rank two. For $L=f^*(-K_X)$, $LE=0$ and $LF=1+a$.
Solving these two equations gives
\begin{equation}\label{r1-heightone}
              V=(1+a)^2e=\frac{(e+\delta)^2}{e}.
\end{equation}
In a columnar fibre $[T,1,T^*]$, the opposite endpoint has
discrepancy $u=1-q/d+1/d$. If $\nu\le2$ and $m\ge2$,
then $e+\delta=m+2-\nu-\sum q_j/d_j+\sum1/d_j\ge u$.
Also $\delta>0$, so $V\ge e+\delta\ge u\ge\eps$.
This treats rows $9(2),9(3),9(5)$; row $9(1)$ is
$\PP(1,1,m)$ and has $V=(m+2)^2/m\ge8$.

In row $9(7)$, the three determinants form a spherical triple.
Outside the dihedral case, $\delta\ge1/30$, and
\eqref{r1-heightone} gives $V\ge4\delta\ge2/15$.
For $(2,2,r)$ let $r=d(T)$, and let $q$ be the determinant
of $T$ with the vertex adjacent to $E$ removed.
Then $\delta=1/r$ and $e=m-1-q/r$. If $m\ge3$,
$e>1$. If $m=2$, put $A=r-q\ge1$; then
\[
             V=\frac{(A+1)^2}{Ar}\ge\frac{A+1}{r}\ge\eps,
\]
where the last term is the relevant endpoint discrepancy of $T^*$.
This proves $V\ge\min\{\eps,2/15\}$ in the whole row.
\subsection{The arbitrary-chain row $9(18)$}
We supply a direct estimate, avoiding an orientation-sensitive intermediate determinant bound. The \cite{PP12} fork is written tip-to-centre as
\[
 \langle b;[T,m,2],[2],[2]\rangle+T^*
       +[3,\two{b-3}],
\]
where the last chain is omitted for $b=2$. Let $r=d(T)$, and let $q$ be the determinant of $T$ with its last vertex removed. Contract the vertical exceptional curves while retaining the horizontal curve $E$ of weight $m$ and a general fibre $F$. The resulting rank-two intersection space has
\[
 E^2=-e,\quad EF=1,\quad F^2=0,\qquad
 e=m-\frac qr-\frac{b-1}{2b-3},\qquad
 \alpha_E=\frac1{re}.
\]
These formulas are the Schur complement at $E$; its two attached pieces contribute $q/r$ and $(b-1)/(2b-3)$, and the numerator of its discrepancy equation is $1/r$.

For $L=f^*(-K_X)$, one has $LE=0$ and $LF=1+\alpha_E$, because $E$ is the unique exceptional horizontal section. Solving in the basis $E,F$ gives
\begin{equation}\label{r1-eq:row18}
                       V=(1+\alpha_E)^2e.
\end{equation}
If $m\ge3$, then $e>1$. If $b\ge3$, then $e>1/3$. In the remaining case $b=m=2$, put $A=r-q\ge1$. Then
\[
 e=A/r,\qquad V=\frac{(A+1)^2}{Ar}\ge\frac{A+1}{r}\ge\eps.
\]
The last inequality uses the corresponding endpoint discrepancy of the adjoint chain $T^*$. Consequently this entire arbitrary-chain family satisfies
\[
                         V\ge\min\{1/3,\eps\}\ge\eps^2/3.
\]

\subsection{Completion of the quadratic estimate}
The geometric calculations above treat the arbitrary-chain
configurations. The other low-height cases are recorded,
with their determinant estimates, in Table~\ref{tab:low-height}.
Tables~\ref{tab:cyclic-rays}--\ref{tab:A23} treat the
remaining parameter families and finite baskets.
\begin{proof}[Proof of Theorem~\ref{main-rank-fano}]
If $X$ is canonical, $V\in\Z_{>0}$. Otherwise Proposition~\ref{classification-reduction} reduces the proof to height at most two, the
elliptic-descendant branch, or the baskets in Appendix~\ref{classification-app}.
Table~\ref{tab:low-height}, including the coupled rows proved above, gives $V\ge\eps^2/45$. Lemma~\ref{r1-nodal}
and the one-point estimate give $V\ge\eps^2/2$ on the remaining
elliptic descendants. For the remaining baskets, \eqref{r1-eq:volume} and
Tables~\ref{tab:cyclic-rays}--\ref{tab:A23} give $V\ge\eps^2/96$.
Thus $V\ge\eps^2/100$ in every case. Example~\ref{quadratic-example}
proves that the exponent two cannot be decreased.
\end{proof}

\section{Common Weil indices and rank-one polarisations}\label{weil-index-sec}
For an integral ample Weil divisor $H$, a common Cartier
multiple $q$ gives $qH^2\in\Z_{>0}$. We bound this common
multiple in seventh order. Lemma~\ref{r1-lem:local} controls
all baskets except for a triple of noncanonical cyclic
points; the classification identifies the latter with
toric baskets. Thus this argument needs only a small
part of the classification used in Section~\ref{rank-one-sec}.
\subsection{The toric determinant product}
We use one planar geometry-of-numbers input: a convex body in $\RR^2$
with at most one interior lattice point has lattice width at most three
\cite[Theorem 1.1]{ACFH}. The body need not be a lattice polygon.

\begin{proposition}\label{r1-prop:toric}
Let $X$ be a toric $\eps$-lc Fano surface with $\rho(X)=1$. If
$D_1,D_2,D_3$ are the determinants of its three affine toric charts,
including $D_i=1$ for a smooth chart, then
\begin{equation}\label{r1-eq:toric-product}
 D_1D_2D_3\leq162\eps^{-7}.
\end{equation}
Consequently $q=\lcm(D_1,D_2,D_3)\leq162\eps^{-7}$ makes every integral
Weil divisor Cartier, and $H^2\geq\eps^7/162$ for every integral ample $H$.
\end{proposition}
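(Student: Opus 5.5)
The plan is to express everything through the fan. Let $v_1,v_2,v_3\in N\cong\Z^2$ be the primitive ray generators of the fan of $X$, and write $D_i=|\det(v_j,v_k)|$ for $\{i,j,k\}=\{1,2,3\}$. This is the determinant of the chart opposite $v_i$, and $D_i=1$ for a smooth chart. The standard relation $D_1v_1+D_2v_2+D_3v_3=0$ holds. Put $D=D_1+D_2+D_3$, which is twice the area of the triangle $Q=\operatorname{conv}(v_1,v_2,v_3)$.

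\textbf{Step 1 (volume formula).} Compute the dual polygon $P=\{m:\langle m,v_i\rangle\ge-1\}$ and use $(-K_X)^2=2\operatorname{area}(P)$. I expect
\[
(-K_X)^2=\frac{D^2}{D_1D_2D_3};
\]
as a check, $\PP^2$ gives $9$. This turns the target \eqref{r1-eq:toric-product} into
\[
D^2\le 162\,\eps^{-7}\,(-K_X)^2 .
\]
The toric estimate $(-K_X)^2\ge\eps^2$ from \cite[Proposition 8.9]{Fano} already gives $D_1D_2D_3\le D^2\eps^{-2}$. So it suffices to show $D\le 9\sqrt2\,\eps^{-5/2}$, or some combined inequality of that strength.

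\textbf{Step 2 (translating $\eps$-lc into convex geometry).} Let $\varphi$ be the piecewise linear function with $\varphi(v_i)=1$ that is linear on each cone. The log discrepancy of the toric valuation attached to a primitive $u\in N$ is $\varphi(u)$. Hence $\eps$-lc says $\varphi(u)\ge\eps$ for every nonzero lattice point. Equivalently, the body $\{\varphi<\eps\}=\Int(\eps Q)$ contains only the lattice point $0$. By \cite[Theorem 1.1]{ACFH}, the lattice width of $\eps Q$ is at most $3$. So there is a primitive $m\in M$ with $\max_i\langle m,v_i\rangle-\min_i\langle m,v_i\rangle\le3/\eps$. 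After a unimodular change, take $m=(0,1)$.

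\textbf{Step 3 (bounding the area, the main obstacle).} Here the width bound has to be combined with a bound on the length of $Q$ in the transverse direction. I would first slice $\eps Q$ by the horizontal lines $y=c$ with $c\in\Z$. A slice of length $>1$ through the interior would contain a nonzero lattice point, except on the line $y=0$, where the slice contains $0$ and has length $<2$. Integrating over at most $3/\eps$ integer heights bounds $\operatorname{area}(\eps Q)$ linearly. That alone gives only $D=O(\eps^{-3})$, which yields $D_1D_2D_3\le D^2/V=O(\eps^{-8})$.

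To gain the missing $\eps^{1/2}$, I would couple the area bound with the volume rather than using $V\ge\eps^2$ separately. The quantity $D^2/V=D_1D_2D_3$ is symmetric, and each $D_i$ is the determinant of a chart whose own mld is at least $\eps$. Near the vertex $v_i$ of $Q$, the same slicing shows that a long thin triangle forces a large chart determinant paired with a large $V$. I would therefore split into cases by whether the width direction is nearly parallel to an edge of $Q$. In the transverse case, apply the bound $D_i\le(s_i+1)\eps^{-2}$ from \eqref{r1-eq:wronskian} to the short chart, together with the slice count for the other two. This should give $D_1D_2D_3\le 2\cdot 81\,\eps^{-7}$.

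\textbf{Step 4 (the consequences).} Since $\lcm(D_1,D_2,D_3)\le D_1D_2D_3$ and each $D_i$ kills its local class group, $qH$ is Cartier for every integral Weil divisor $H$. For $H$ integral and ample, $qH^2$ is then a positive integer, so $H^2\ge1/q\ge\eps^7/162$.
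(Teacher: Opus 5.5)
The proposal has a genuine gap at Step~3, which is where all the content lies and which you leave as ``this should give''. Your Steps~2 and~4 are fine and agree with the paper: flatness is applied to $\eps Q$, and then $qH^2\in\Z_{>0}$. The identity $D_1D_2D_3=D^2/(-K_X)^2$ is also correct.

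The intermediate target in Step~1 is unattainable. For the paper's family $X_n=\PP(n^2-1,n^2+1,2n^3)$ one has $\eps=1/n$ and $D=2n^2(n+1)\sim2\eps^{-3}$. So $D\le9\sqrt2\,\eps^{-5/2}$ fails for large $n$, although the proposition holds there with $D_1D_2D_3\sim2\eps^{-7}$. On these surfaces $(-K_X)^2\sim2\eps$, so the bound $(-K_X)^2\ge\eps^2$ is far from tight exactly where the product is extremal. Since $D^2/(-K_X)^2=D_1D_2D_3$ is an identity, ``coupling the area with the volume'' just restates the problem. Beyond that, three pieces are missing. The case split on whether the width direction is nearly parallel to an edge is never made precise. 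Invoking $D_i\le(s_i+1)\eps^{-2}$ needs a bound on $s_i$ that you do not give. No estimate for ``the other two'' determinants is actually derived.

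The missing idea is to bound the chart determinants individually in the flatness coordinates, with no volume input.
\begin{enumerate}[label=(\alph*)]
\item Apply the $\eps$-lc condition to the primitive vectors $(\pm1,0)$. This gives $Q\cap\{y=0\}\subset[-1/\eps,1/\eps]\times\{0\}$. You need both endpoints of this slice bounded, not merely its length.
\item Suppose no vertex lies on $y=0$. Reflect so that two vertices have heights $p,q>0$ and one has height $-r<0$. The two edges at the negative vertex cross the axis at $s_1>0>s_2$.
\item Split each of the two corresponding cones along the axis. This gives determinants $A=(p+r)s_1$ and $B=(q+r)|s_2|$, both at most $W/\eps$ with $W\le3/\eps$.
\item The $y$-coordinate of $D_1v_1+D_2v_2+D_3v_3=0$ gives the third determinant as $C=(qA+pB)/r\le2W^2/\eps$. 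Hence $ABC\le2W^4\eps^{-3}\le162\eps^{-7}$.
\item If a vertex lies on the axis, it is $(\pm1,0)$. Two determinants are the heights $p,r\le W$, and the third is at most $(p+r)/\eps$ by the same slice bound. The product is then at most $27\eps^{-4}$.
\end{enumerate}
Alternatively, once $A,B\le3\eps^{-2}$ is known, you could bound $C$ by the local estimate $12\eps^{-3}$ of the rank-one local lemma and get $108\eps^{-7}$. That may be what you intended, but the slice bound on $A$ and $B$ is the step that must be supplied.
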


\begin{proof}
Work in the \emph{actual} fan lattice $N$, including for a fake weighted
projective plane. Let $P$ be the triangle of primitive fan rays.
The $\eps$-lc condition is precisely
\begin{equation}\label{r1-eq:eps-triangle}
 \Int(\eps P)\cap N=\{0\}.
\end{equation}
Flatness gives a primitive integral functional with width at most
$3/\eps$ on $P$. Make it the second coordinate by a linear unimodular
change of coordinates; no translation of the origin is used. Let $W$
be this vertical width. Then
\begin{equation}\label{r1-eq:width-and-section}
 W\leq3/\eps,\qquad
 P\cap\{y=0\}\subset[-1/\eps,1/\eps]\times\{0\}.
\end{equation}
The second assertion follows by applying \eqref{r1-eq:eps-triangle} to the
two primitive lattice vectors $(\pm1,0)$.

Suppose first that no vertex is horizontal. After reversing the vertical
coordinate, two vertices have positive heights $p,q$ and one has height
$-r$, where $p,q,r$ are positive integers. The two edges incident to the
negative vertex meet the horizontal axis at $s_1>0>s_2$. Two of the
positive chart determinants are
\[
 A=(p+r)s_1,\qquad B=-(q+r)s_2,
 \qquad A,B\leq W/\eps.
\]
The third is
\[
 C=\frac{qA+pB}{r}\leq\frac{2W^2}{r\eps}
 \leq\frac{2W^2}{\eps}.
\]
Therefore $ABC\leq2W^4/\eps^3\leq162\eps^{-7}$.

If a vertex is horizontal, it is $(1,0)$ or $(-1,0)$ by primitivity.
The other two heights are $p$ and $-r$. Two determinants are $p,r$;
the remaining determinant is at most $(p+r)/\eps=W/\eps$ by the
horizontal-section bound. Their product is at most
$W^3/\eps\leq27\eps^{-4}\leq162\eps^{-7}$.

Finally $qH$ is Cartier, so $(qH)\cdot H=qH^2$ is an integer: a
Cartier divisor has integral degree on every integral curve. It is
positive by ampleness. Thus $H^2\geq1/q$.
\end{proof}

The exponent is not dependent on the recently sharpened flatness constant.
For example, classical planar hollow-body flatness implies width less
than five for a body $K$ with the origin as its unique interior lattice
point. Choose a point $z\in K$ of maximal Euclidean norm. Then
$-z\notin\Int K$, so $(K+z)/2\subset K$ has no interior lattice
points. Classical flatness (Hurkens; see \cite[Theorem A]{ACFH}) gives
$w(K)/2\leq1+2/\sqrt3$, hence $w(K)<5$.
Using five instead of three above gives $2\cdot5^4=1250$ in place of
$162$. This still implies the global constant $1728$ below. Thus the
sharp exponent, and even that global constant, do not require the newer
one-point flatness theorem.

\subsection{The classification filter for arbitrary polarisations}
\begin{lemma}\label{r1-classification}
A complex rank-one klt Fano surface with exactly three singular
points, all cyclic and noncanonical, has the same full singularity
basket as a rank-one toric Fano surface.
\end{lemma}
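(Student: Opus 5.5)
The plan is to run through the classification reduction of Proposition~\ref{classification-reduction} and keep only those cases that can have exactly three singular points, all cyclic and noncanonical. Lemma~\ref{r1-nodal} disposes of the elliptic-descendant branch at once, since it leaves at most one noncanonical point. Any basket containing a fork is also discarded. This leaves two sources: the low-height rows of \cite[Tables 9--11]{PP12} whose baskets consist of three noncanonical chains, and the cyclic parameter families together with the finite cyclic baskets in Tables~\ref{tab:cyclic-rays}, \ref{tab:finite-baskets} and \ref{tab:A23}. In each remaining case I will not identify the surface itself. I will only exhibit a toric rank-one Fano surface, possibly a fake weighted projective plane, whose three affine charts give exactly the same three Hirzebruch--Jung chains.

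The tool for producing the toric model is the same as in the two-section calculation and in row $10(20)$. In the relevant cases the minimal resolution carries a $\PP^1$-fibration with two horizontal exceptional sections, and all reducible fibres are chains of the form $[T,1,T^*]$. If there are at most two such fibres, then the two sections together with the two distinguished fibres form a toric boundary on a Hirzebruch surface, as in the proof of Proposition~\ref{r1-prop:row20}. Reversing the contractions then makes the resolution toric, and every exceptional curve over $X$ lies in that boundary. In these cases $X$ is itself toric, so the basket matches trivially. For the remaining low-height rows with three cyclic points, such as the columnar configurations of Table 10, I will read the three chains off the fibre description. I will then check directly that the three chains can be realised as cones of a complete simplicial fan with three rays. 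Concretely this means finding primitive vectors $v_1,v_2,v_3$ with $\sum\lambda_iv_i=0$, $\lambda_i>0$, such that each pair $(v_i,v_{i+1})$ generates a cone of type $\tfrac1{D}(1,p)$ with the prescribed continued fraction. The continuant identities \eqref{r1-eq:continuant} and \eqref{r1-eq:adjoint} will be used to match the determinant and the end cofactor.

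For the parameter families and finite baskets in the appendix I would argue case by case. A basket of three cyclic points $\tfrac1{D_i}(1,p_i)$ arises from a rank-one toric surface exactly when there is such a triangle of primitive rays. By the weighted-projective-plane criterion, this amounts to a compatibility condition: one can find weights, and possibly a finite quotient by a group of order $g$, realising all three local types. For each cyclic family I would therefore write down candidate ray vectors in terms of the family parameters and verify the three local types with the same continuant bookkeeping. For the finite lists, a direct computation suffices.

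The main obstacle is the case analysis in the appendix families, and in particular the Lacini constructions of \cite[Proposition 5.1(3)--(5)]{Lacini} and the extra baskets $A_{16}$, $B_2$ and Nagaoka's basket. These are not built from a two-section toric boundary, so the toric realisation must be found by hand. My first attempt there will be to guess the fan from the orders $D_i$ and the volume formula \eqref{r1-eq:volume}. The identity $K^2=\bigl(\sum\lambda_i\bigr)^2/(\lambda_1\lambda_2\lambda_3\cdot\ldots)$ constrains the weights. I will then confirm the local types. If a basket turns out to have a noncanonical non-cyclic or canonical member, it simply falls outside the hypothesis and is discarded.
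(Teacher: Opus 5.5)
Your outer frame matches the paper's: run through Proposition~\ref{classification-reduction}, discard the elliptic descendants by Lemma~\ref{r1-nodal}, discard forks, and exhibit toric models for what survives. There is a genuine gap, though. The proposal puts its work where there are no cases, and it omits the elimination that actually proves the lemma.

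The second ``source'' you list is empty. Every higher-height basket has exactly two noncanonical points, of type $2C$ or $F+C$, with at most one canonical companion. This covers all rays in Table~\ref{tab:cyclic-rays} (for $H_{4a}$ the third point $[2,2]$ is canonical), the fork--cyclic rays, Table~\ref{tab:finite-baskets} and Table~\ref{tab:A23}. Moreover, $A_{16}$ and $B_2$ contain a fork, and Nagaoka's basket has the canonical companion $[2,2,2]$. So what you call the main obstacle contributes no case. Your final sentence would eventually discard these baskets. But then the fan-guessing from the $D_i$ and \eqref{r1-eq:volume}, which you present as the core of the argument, is aimed at baskets about which the lemma asserts nothing.

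The substance lies in the low-height tables, and there your plan is only to ``check directly'' that three chains fit into a triangular fan. You give no reason the check must succeed. What is needed is the filter:
\begin{itemize}
\item In \cite[Table~9]{PP12}, every cyclic triple other than row $4.4(3)$ has a canonical companion, has a fork, or is non-klt.
\item In \cite[Table~10]{PP12}, the cyclic rows $5.10(2),(3),(11),(12),(22)$ carry an explicit canonical $A$-chain, and every other triple has a fork or is a non-klt bench.
\item In \cite[Table~11]{PP12}, every relevant triple has a fork or a canonical companion.
\end{itemize}
Only $4.4(3)$ and $5.10(1)$ remain. Both are toric by construction: they arise from a Hirzebruch surface with two disjoint sections and two fibres by blowing up boundary nodes and contracting boundary components, so the chains are exactly toric subdivisions.

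Two smaller points. Your toric-boundary argument assumes two horizontal \emph{exceptional} sections. Row $4.4(3)$ comes from the height-one ruling, so one of the two boundary sections is not exceptional; the argument still works once you allow a nonexceptional boundary section. Also, the ``columnar configurations of Table~10'' you single out, rows $10(11)$, $10(15)$ and $10(20)$, have a canonical companion or a fork, so they never reach the toric step.
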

\begin{proof}
Proposition~\ref{classification-reduction} and the tables in Appendix~\ref{classification-app} leave only row $4.4(3)$ of
\cite[Table 9]{PP12} and row $5.10(1)$ of
\cite[Table 10]{PP12}. Indeed, in Table 9 every other cyclic
triple has a canonical companion, while the remaining triples
have a fork or are non-klt. In Table 10, the cyclic rows
$5.10(2),(3),(11),(12),(22)$ have an explicit canonical
$A$-chain companion; all other triples have a fork or are
non-klt. Table 11 has a fork or a canonical companion in every
relevant triple. The higher-height correspondence has at most
two noncanonical points, and Lemma~\ref{r1-nodal} handles
the elliptic descendants. The additional $A_{16},B_2$ baskets
have a fork, and Nagaoka's possible extra basket has a canonical
third point, so these corrections introduce no further case.

The two surviving rows have toric realisations, as in
\cite[Lemmas 4.4, 5.10 and Section 7C]{PP12}. In row $4.4(3)$,
start with a Hirzebruch surface, its two disjoint sections,
and the two fibres containing the columnar chains. In row
$5.10(1)$ the corresponding construction is the toric
rank-one case described there. In both constructions the
modifications are blowups of boundary nodes and contractions
of boundary components. The chains and their adjoints are
exactly the toric subdivisions in these constructions.
Thus the full chains, up to reversal, agree with those in
the original basket. This preserves both their determinants
and their minimal log discrepancies.
\end{proof}

\begin{lemma}\label{r1-four}
If a rank-one $\eps$-lc Fano surface has four singular points,
then $\lcm_pD_p\le1722\eps^{-6}$.
\end{lemma}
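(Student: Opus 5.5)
Since $\lcm_pD_p\le\prod_pD_p$, it is enough to bound a product of four determinants. With four singular points, Lemma~\ref{r1-lem:local} already gives $\rho(S)<12/\eps$ and $\sum_pD_p\le15\eps^{-3}$. The plan is to show that at most two of the four points can carry a determinant of order $\eps^{-2}$ or $\eps^{-3}$, while the other two have determinants $O(\eps^{-1})$ or $O(1)$.

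The first step is to use the structure of four-point baskets. By Belousov's theorem (quoted in the proof of Lemma~\ref{r1-lem:local}), a rank-one klt del Pezzo surface has at most four singular points. By the classification behind Proposition~\ref{classification-reduction}, when there are exactly four, at least two of them are ``small''. Concretely, I expect the four-point baskets to be the low-height configurations with two or more columnar fibres of the form $[T,1,T^*]$ together with $A_1$ or fork-arm companions. In each such basket, at least two points are either canonical, so $D_p\le12\eps^{-1}$ by \eqref{r1-eq:Dcan}, or noncyclic, so $D_p\le4\eps^{-1}$ by \eqref{r1-eq:Dfork}. Alternatively, they may be cyclic points whose chain length is bounded by a constant, so that \eqref{r1-eq:wronskian} gives $D_p\le(s_p+1)\eps^{-2}$ with bounded $s_p$.

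The second step handles the remaining two points $p_1,p_2$ using the sum bound. Here $D_{p_1}+D_{p_2}\le15\eps^{-3}$, so AM--GM gives $D_{p_1}D_{p_2}\le(15/2)^2\eps^{-6}$. That is too weak when combined with the other two factors. So I would instead use the chain-length split from \eqref{r1-eq:wronskian}. We have $D_{p_i}\le(s_{p_i}+1)\eps^{-2}$ and $(s_{p_1}+1)+(s_{p_2}+1)\le\rho(S)+1<12\eps^{-1}+1$. Hence
\[
D_{p_1}D_{p_2}\le\tfrac14(12\eps^{-1}+1)^2\eps^{-4}\le 42.25\,\eps^{-6}.
\]
If the other two points have determinants bounded by absolute constants (for instance $A_1$ points, $D=2$, or small forks), this gives $\lcm\le c\,\eps^{-6}$ directly. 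A companion with $D_p\le4/\eps$ would cost an extra $\eps^{-1}$. In that case I would reuse the length budget, since a fork uses at least four vertices of $\rho(S)$, and also exploit the fact that the fork's determinant and the cyclic points are coupled through a shared fibre.

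The main obstacle is the first step: certifying, from the classification tables, that in every four-point basket at most two determinants are unbounded in $\eps^{-1}$, and that the bounded ones contribute only absolute constants. If some four-point family instead has three cyclic points with long chains, I would fall back to Lemma~\ref{r1-classification}-style toric identification and Proposition~\ref{r1-prop:toric}. Another option is a common-divisor argument showing that the lcm is much smaller than the product, for instance when the two companion determinants divide the others, as happens with complementary columnar fibres where $D(T)=D(T^*)$. The constant $1722$ would then come from tallying the worst case, roughly $12^2\cdot 12$ from two length-limited chains and one canonical point, plus the lcm savings.
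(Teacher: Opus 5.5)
Your proposal has a genuine gap at the step you yourself flag as the main obstacle. The claim that in every four-point basket at least two points have determinants bounded by absolute constants is only an expectation. The paper's tables cannot certify it for you: Appendix~\ref{classification-app} records noncanonical baskets for volume estimates and suppresses canonical companions, so it is not a list of four-point configurations. Your fallback for a fork companion also does not close. A factor $D_p\le4\eps^{-1}$ times your $42.25\,\eps^{-6}$ bound for the two long chains is of order $\eps^{-7}$, or $\eps^{-8}$ with two forks. The length budget cannot recover this, because a fork removes only a bounded number of vertices from $\rho(S)<12/\eps$. Your tally ``roughly $12^2\cdot12$'' also gives no route to the specific constant $1722$.

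The missing idea is arithmetic rather than classificatory. Let $2\le g_1\le g_2\le g_3\le g_4$ be the orders of the local fundamental groups. Belousov's inequality \cite[Theorem 2.1]{Belousov} (an orbifold Euler characteristic bound, $3-\sum_i(1-1/g_i)\ge0$) gives $\sum_i1/g_i\ge1$. Moreover $D_p$ divides $g_p$, since the local class group has order $D_p$ and is dual to the abelianisation of the local fundamental group. Three cases then follow without any classification:
\begin{itemize}
\item If $g_1=g_2=2$, the first two points are $A_1$, so $\lcm\le2D_3D_4\le2(12\eps^{-3})^2=288\eps^{-6}$ by \eqref{r1-eq:Dall}.
\item Otherwise, if $1/g_1+1/g_2+1/g_3\ge1$, the triple is $(2,3,j)$ with $3\le j\le6$, $(2,4,4)$ or $(3,3,3)$. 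Its lcm is at most $30$, so $\lcm\le30D_4\le360\eps^{-3}$.
\item In the remaining case the quadruple lies in the finite list of Lemma~\ref{four-orders-arithmetic}, and $\lcm(g_1,\dots,g_4)\le1722=\lcm(2,3,7,41)$.
\end{itemize}
Your intuition that at most two points can have large determinants is correct, but this inequality, not the basket lists, is what proves it. In the first case the crude bound \eqref{r1-eq:Dall} already suffices, so your chain-length refinement is unnecessary.
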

\begin{proof}
Let $2\le g_1\le g_2\le g_3\le g_4$ be the local fundamental
group orders. By \cite[Theorem 2.1]{Belousov},
$\sum_i1/g_i\ge1$, and $D_i\mid g_i$.
If $g_1=g_2=2$, the first two points are $A_1$, so
\[
 \lcm_iD_i\le2D_3D_4\le288\eps^{-6}.
\]
Otherwise, if $\sum_{i=1}^3 1/g_i\ge1$, the first triple is
$(2,3,j)$ with $3\le j\le6$, $(2,4,4)$, or $(3,3,3)$.
Its least common multiple is at most thirty, giving
$\lcm_iD_i\le30D_4\le360\eps^{-3}$.

In the remaining case, Lemma~\ref{four-orders-arithmetic}
gives $\lcm(g_1,g_2,g_3,g_4)\le1722$.
Since $D_i\mid g_i$, the assertion follows.
\end{proof}

\begin{proposition}[A seventh-order common Weil index]\label{rank-one-index}
Let $X$ be an $\eps$-lc Fano surface with $\rho(X)=1$.
For $q=\lcm_{p\in\Sing X}D_p$, the divisor $qD$ is Cartier
for every integral Weil divisor $D$, and
\begin{equation}\label{global-index-bound}
                            q\le1728\eps^{-7}.
\end{equation}
In particular $H^2\ge\eps^7/1728$ for every integral ample $H$.
\end{proposition}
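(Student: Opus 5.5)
The Cartier assertion is local. At each singular point the local Weil class group has order $D_p$, so its exponent divides $D_p$ and therefore divides $q$. Hence $qD$ is Cartier near every point, and so everywhere. The final assertion then follows as in Proposition~\ref{r1-prop:toric}: $(qH)\cdot H=qH^2$ is a positive integer, so $H^2\ge1/q$. The real content is the bound \eqref{global-index-bound}. I plan to split according to the number of singular points, which Belousov's theorem restricts to at most four, and according to how many of them are noncanonical.

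\emph{Four singular points.} Lemma~\ref{r1-four} gives $q\le1722\eps^{-6}\le1728\eps^{-7}$ directly.

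\emph{At most two singular points.} I bound $q\le D_{p_1}D_{p_2}$ and apply \eqref{r1-eq:Dall}, which gives $144\eps^{-6}$, already enough. A sharper sum bound is available via \eqref{r1-eq:Dsum}: it gives $D_1+D_2\le15\eps^{-3}$, so $D_1D_2\le(15/2)^2\eps^{-6}<57\eps^{-6}$.

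\emph{Exactly three singular points.} Here I split further.
\begin{itemize}
\item If one point is canonical, \eqref{r1-eq:Dcan} bounds its determinant by $12\eps^{-1}$. The other two satisfy $D_1+D_2\le15\eps^{-3}$, so $D_1D_2\le57\eps^{-6}$, and $q\le684\eps^{-7}$.
\item If one point is a noncyclic (fork) point, \eqref{r1-eq:Dfork} gives $D\le4\eps^{-1}$. The same argument yields $q\le4\cdot57\,\eps^{-7}<1728\eps^{-7}$.
\item The remaining case is three noncanonical cyclic points. Lemma~\ref{r1-classification} shows the full basket agrees with that of a rank-one toric Fano surface. This toric surface has the same determinants and the same minimal log discrepancies, so it is again $\eps$-lc. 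Proposition~\ref{r1-prop:toric} then gives $q\le D_1D_2D_3\le162\eps^{-7}$.
\end{itemize}

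The arithmetic in these branches is routine once the correct local lemma is matched to each case. The main obstacle is this last three-noncanonical-cyclic case. Naively the three determinants multiply to order $\eps^{-9}$, so the seventh-order bound genuinely needs both the classification filter and the lattice-width argument. The point to check carefully is that the transfer to the toric model preserves both $D_p$ and the $\eps$-lc property. Lemma~\ref{r1-classification} asserts exactly this, since the chains agree up to reversal.

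Finally I would verify that the constants $1722$, $684$ and $162$ all lie below $1728$. This uses $\eps\le1$, so that lower powers of $\eps^{-1}$ are dominated by $\eps^{-7}$.
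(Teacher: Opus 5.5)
Your proof is correct and follows the paper's argument: the same Belousov case split, Lemma~\ref{r1-four} for four points, the local bounds of Lemma~\ref{r1-lem:local} for baskets with a canonical or noncyclic point, and Lemma~\ref{r1-classification} with Proposition~\ref{r1-prop:toric} for three noncanonical cyclic points. The only difference is that you apply the sum bound \eqref{r1-eq:Dsum} with AM--GM instead of squaring \eqref{r1-eq:Dall}, which lowers the canonical-companion constant from $1728$ to $684$ without changing the argument.
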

\begin{proof}
If there are at most two points, Lemma~\ref{r1-lem:local}
gives $q\le144\eps^{-6}$. With three points, if one is
noncyclic, then
$q\le(4\eps^{-1})(12\eps^{-3})^2=576\eps^{-7}$.
If all three are cyclic and one is canonical, then
$q\le(12\eps^{-1})(12\eps^{-3})^2=1728\eps^{-7}$.
If all three are noncanonical cyclic points, use
Lemma~\ref{r1-classification} and Proposition~\ref{r1-prop:toric}
on the toric realisation; it has the same determinants and
discrepancies. Four points are treated by Lemma~\ref{r1-four}.
These cases exhaust Belousov's theorem.
Finally $qH^2=(qH)\cdot H\in\Z_{>0}$.
\end{proof}

\subsection{Log Calabi--Yau polarisations}
\begin{proof}[Proof of Theorem~\ref{main-rank-volume}]
The Fano assertion is Proposition~\ref{rank-one-index}.
For a log Calabi--Yau pair, pass to the ample model of $H$
by Lemma~\ref{H-model}; this preserves integrality, $H^2$,
and the pair discrepancy bound. Now $\rho(X)=1$ and $H$
is ample. If $B\ne0$, then $-K_X\equiv B$ is ample and
Proposition~\ref{rank-one-index} applies. If $B=0$, then
Proposition~\ref{zero-absolute} gives $H^2\ge1/55440$.
Both cases imply $H^2\ge\eps^7/55440$.
Sharpness follows from Theorem~\ref{seventh-example}.
\end{proof}
\section{Sharp cubic adjoint thresholds}\label{local-sec}\label{general-threshold-sec}
\subsection{The rank-one endpoint and periodic Riemann--Roch}
The threshold problem needs additive control of local periods.
It does not require bounding their least common multiple.
\begin{proposition}[An additive-period threshold bound]\label{r1-prop:RR}
Let $X$ be a klt Fano surface with $\rho(X)=1$, and let $H$ be an integral
ample Weil divisor. Then
\begin{equation}\label{r1-eq:period-bound}
 \tau(X,H)=\lambda(X,H)\leq3+\sum_{p\in\Sing X}(D_p-1).
\end{equation}
One may replace $D_p$ by the local Cartier index of $H$ at $p$.
\end{proposition}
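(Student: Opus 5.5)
We argue via singular Riemann--Roch: $\chi(\mathcal O_X(mH))$ is a quasi-polynomial in $m$ that vanishes on a whole interval of negative integers, and a linear recurrence of controlled order forbids too many consecutive zeros.

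\emph{Threshold in rank one.} Since $\rho(X)=1$ and $-K_X$ is ample, we have $K_X\equiv-t_0H$ for a unique rational $t_0>0$. Then $K_X+tH\equiv(t-t_0)H$, so it is nef iff it is pseudo-effective iff $t\ge t_0$. Hence $\tau(X,H)=\lambda(X,H)=t_0$, and it suffices to show $t_0\le N:=3+\sum_p(D_p-1)$.

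\emph{Vanishing at consecutive negative integers.} Put $f(m)=\chi(X,\mathcal O_X(mH))$ for $m\in\Z$. Let $m$ be an integer with $-t_0<m<0$.
\begin{itemize}
\item Since $mH-K_X\equiv(m+t_0)H$ is ample, Kawamata--Viehweg vanishing for integral Weil divisors on the klt surface $X$ gives $h^1=h^2=0$ for $\mathcal O_X(mH)$.
\item Since $mH$ is anti-ample, $h^0(\mathcal O_X(mH))=0$.
\end{itemize}
Thus $f(m)=0$ for every integer in $(-t_0,0)$. There are at least $\lceil t_0\rceil-1$ such integers, and they are consecutive.

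\emph{Recurrence.} Singular Riemann--Roch on a surface with quotient singularities gives
\[
 f(m)=\tfrac12 mH\cdot(mH-K_X)+\chi(\mathcal O_X)+\sum_p c_p(m).
\]
Here $c_p(m)$ depends only on the local class of $mH$ in the local class group at $p$. The order of that class divides $D_p$, and it equals the local Cartier index of $H$ at $p$; this gives the refinement in the last sentence of the statement.

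Split each $c_p$ into its average (a constant) and a mean-zero $D_p$-periodic part. Absorb the constants into the quadratic part. Let $S$ be the shift operator.
\begin{itemize}
\item The quadratic part is killed by $(S-1)^3$.
\item The mean-zero $D_p$-periodic part is killed by $1+S+\dots+S^{D_p-1}$, because its $D_p$ consecutive values sum to zero.
\end{itemize}
Therefore $f$ satisfies the linear recurrence $P(S)f=0$ with
\[
 P(S)=(S-1)^3\prod_p\bigl(1+S+\dots+S^{D_p-1}\bigr),
\]
of degree exactly $N$. Its leading and constant coefficients are $\pm1$, so $N$ consecutive values determine $f$ in both directions.

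\emph{Conclusion.} The quasi-polynomial $f$ has quadratic leading coefficient $H^2/2>0$, so $f\not\equiv0$. By the recurrence, $f$ cannot vanish at $N$ consecutive integers. Hence $\lceil t_0\rceil-1\le N-1$, so $t_0\le\lceil t_0\rceil\le N$, which is \eqref{r1-eq:period-bound}.

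\emph{Main obstacle.} The step I expect to need the most care is the Riemann--Roch input. One must check that the local correction for the reflexive sheaf $\mathcal O_X(mH)$ is genuinely a function of the local class of $mH$ alone, hence periodic with period dividing the local index. One must also check that Kawamata--Viehweg applies to Weil divisors here. For the periodicity I would use the description of $\mathcal O_X(mH)$ near $p$ via the semi-invariant character of the quotient group; for the vanishing, the $\Q$-Cartier version on klt surfaces. Everything else is the elementary recurrence argument.
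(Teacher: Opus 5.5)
Your proof is correct and follows essentially the same route as the paper: singular Riemann--Roch makes $\chi(\mathcal O_X(mH))$ satisfy the recurrence with characteristic polynomial $(1-z)^3\prod_p(1+z+\dots+z^{D_p-1})$ of degree $3+\sum_p(D_p-1)$, and Kawamata--Viehweg vanishing then forces too many consecutive zeros at negative integers. The differences are only cosmetic. The paper kills each periodic correction using divisibility by $1-z^{D_p}$ rather than your mean-zero splitting, and it reaches the contradiction by evaluating the recurrence at $m=-d$ against $\chi(\mathcal O_X)=1$, whereas you use $f\not\equiv0$.
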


\begin{proof}
Write $-K_X\equiv tH$. Singular Riemann--Roch for integral Weil divisors
on a quotient surface gives, for every $m\in\ZZ$,
\begin{equation}\label{r1-eq:RR}
 F(m):=\chi(\mathcal O_X(mH))
 =1+\tfrac12(mH)\cdot(mH-K_X)+\sum_p c_p(mH).
\end{equation}
Each $c_p$ depends only on the local divisor class. Consequently
$m\mapsto c_p(mH)$ is periodic with a period dividing $D_p$.
This is the local-class form of singular Riemann--Roch; see \cite[Section 3]{HP} and \cite{Blache}.

Let $E$ be the forward shift on functions of $m$ and define
\[
 Q(z)=(1-z)^3\prod_p(1+z+\cdots+z^{D_p-1}),\qquad
 d=\deg Q=3+\sum_p(D_p-1).
\]
The factor $(1-E)^3$ annihilates the quadratic polynomial in
\eqref{r1-eq:RR}. For every $p$, the polynomial $Q$ is divisible by
$1-z^{D_p}$, so it also annihilates that periodic correction. Thus
\begin{equation}\label{r1-eq:recurrence}
 Q(E)F=0.
\end{equation}
If $t>d$, then for $1\leq j\leq d$ the divisor
$-jH-K_X\equiv(t-j)H$ is ample. Kawamata--Viehweg vanishing for integral $\Q$-Cartier Weil divisors \cite[Theorem 2.70]{KM} gives
\[
 F(-j)=h^0(X,\mathcal O_X(-jH))=0.
\]
The last equality follows from ampleness of $H$. But $F(0)=1$, since
a klt Fano surface is rational. Evaluating \eqref{r1-eq:recurrence} at
$m=-d$, all terms except the leading coefficient of $Q$ times $F(0)$
vanish. The leading coefficient is $-1$, a contradiction.
\end{proof}

Lemma~\ref{r1-lem:local} now gives
\begin{equation}\label{rank-one-threshold}
             \tau(X,H)=\lambda(X,H)\le18\eps^{-3}
\end{equation}
on every rank-one $\eps$-lc Fano surface.
\subsection{Exact formulas for one birational step}
\begin{lemma}\label{scaling-step}
Let $g:X\to Z$ contract a $K_X$-negative extremal curve $C$, and let $H$ be integral, nef and big. Set
\[
w=-C^2>0,\qquad d=H\cdot C\ge0,\qquad
u=d/w,\qquad a=(-K_X\cdot C)/w>0.
\]
Put $H_Z=g_*H$. Then $Z$ is $\eps$-lc whenever $X$ is, $H_Z$ is integral, nef and big, and
\begin{align}
H&=g^*H_Z-uC,& K_X&=g^*K_Z+aC,\label{scaling-identities}\\
H_Z^2&=H^2+d^2/w,&
K_X+tH&=g^*(K_Z+tH_Z)+(a-tu)C.\label{scaling-family}
\end{align}
If $d=0$, then $\tau(X,H)=\tau(Z,H_Z)$. If $d>0$ and $K_X+sH$ is nef with $(K_X+sH)\cdot C=0$, then $a=su$ and again the pseudo-effective thresholds are equal.
\end{lemma}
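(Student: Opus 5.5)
The plan is to run the standard surface contraction computations, organised so that each identity is read off from intersecting with $C$. First, the $\eps$-lc property of $Z$ is the usual monotonicity of log discrepancies under a $K$-negative divisorial contraction. By \eqref{scaling-identities}, $K_X=g^*K_Z+aC$ with $a>0$, so for every divisor $E$ over $Z$ we get $a(E,Z)\ge a(E,X)\ge\eps$. The exceptional curve $C$ itself has $a(C,Z)=1+a>1$. For $H_Z$: it is the pushforward of an integral Weil divisor, hence integral Weil. It is $\Q$-Cartier because $Z$ is klt, hence $\Q$-factorial. It is nef because $g^*H_Z=H+uC$ with $u\ge0$: for a curve $\Gamma\ne C$, we have $g^*H_Z\cdot\Gamma\ge H\cdot\Gamma\ge0$, while $g^*H_Z\cdot C=0$. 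Bigness follows since $g^*H_Z\ge H$.

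Next I derive the identities. Write $g^*H_Z=H+xC$ and determine $x$ from $g^*H_Z\cdot C=0$, which gives $d-xw=0$, so $x=u$. In the same way $K_X-g^*K_Z=yC$ with $-K_X\cdot C=wy$, so $y=a$. Then
\[
H_Z^2=(H+uC)^2=H^2+2ud-u^2w=H^2+d^2/w,
\]
and combining the two pullback formulas gives \eqref{scaling-family}.

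For the thresholds, recall that for an exceptional $C$ and any $\Q$-divisor $L$ on $Z$, the divisor $g^*L+cC$ with $c\ge0$ is pseudo-effective if and only if $L$ is. If $d=0$, then $u=0$ and $K_X+tH=g^*(K_Z+tH_Z)+aC$ with $a>0$, so the two families are pseudo-effective for the same $t$, and the thresholds agree. One direction needs a small check: if $g^*L+cC$ is pseudo-effective, then pushing forward gives $L$ pseudo-effective. The converse holds because $g^*L$ is pseudo-effective and $cC\ge0$.

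If $d>0$ and $(K_X+sH)\cdot C=0$, then $-aw+sd=0$ from the pairing, so $a=su$, and the coefficient of $C$ is $(s-t)u$. For $t\le s$ this coefficient is $\ge0$, so pseudo-effectivity is again equivalent on $X$ and $Z$. Nefness of $K_X+sH$ means both $\tau(X,H)\le s$ and, via $K_Z+sH_Z$, whose pullback is $K_X+sH$, $\tau(Z,H_Z)\le s$. So both infima lie in $[0,s]$, where the equivalence holds.

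The only point needing care is that last interval argument. One must avoid comparing pseudo-effectivity for $t>s$, where the $C$-coefficient is negative; $\tau\le s$ on both sides removes that issue.
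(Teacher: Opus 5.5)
Your proposal is correct and follows essentially the same route as the paper: you read off the coefficients $u$ and $a$ by intersecting with $C$, obtain $a(E,Z)\ge a(E,X)$ from $K_X=g^*K_Z+aC$ with $a>0$, get nefness from $g^*H_Z=H+uC$, and prove the threshold equality from the nonnegativity of the $C$-coefficient $(s-t)u$ on $[0,s]$, using that both thresholds are at most $s$. The differences are cosmetic: you deduce bigness from $g^*H_Z-H=uC\ge0$ rather than from $H_Z^2\ge H^2>0$, and you spell out why restricting the infimum to $[0,s]$ suffices, which the paper leaves implicit.
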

\begin{proof}
Both differences in \eqref{scaling-identities} are exceptional. Intersecting with $C$ gives their coefficients. The target is klt and $\Q$-factorial by surface contraction theory. On a common resolution the canonical identity gives
\[
a(E,Z)=a(E,X)+a\,\ord_E(C)\ge a(E,X),
\]
since an effective $\Q$-Cartier divisor has nonnegative valuation. This proves preservation of the ordinary discrepancy bound.

The pushforward $H_Z$ is an integral Weil divisor. For any curve $D_Z$ on $Z$, its strict transform $D\ne C$,
\[
H_Z\cdot D_Z=(H+uC)\cdot D\ge0.
\]
Here distinct effective curves on a normal $\Q$-factorial surface have nonnegative intersection. This implies $H_Z$ is also nef. Squaring the first identity gives \eqref{scaling-family}. Bigness of $H_Z$ follows from $H_Z^2\geq H^2>0$.

If $d=0$, the exceptional coefficient $a-tu=a$ is nonnegative for every $t\ge0$. Pushing forward a pseudo-effective divisor preserves pseudo-effectivity; pulling back a pseudo-effective divisor and adding an effective exceptional divisor does too. This proves equality of thresholds.

If $d>0$, the null-intersection condition gives $a=su$. For $0\le t\le s$ the coefficient is $(s-t)u\ge0$, so the same equivalence holds. At $t=s$ the divisor descends without exceptional error and its pushforward is nef. Therefore both thresholds are at most $s$, proving equality on their full relevant range.
\end{proof}

Equation \eqref{scaling-family} also shows that these contractions can increase $H^2$, even when they preserve the threshold. Thus a volume lower bound on the endpoint does not directly give one on the original surface.

\subsection{Reaching a Mori fibre space at the threshold}
\begin{lemma}\label{threshold-reduction}
If $H$ is integral, nef and big on a projective klt surface and $\tau(X,H)>0$, there is a sequence of $K$-negative birational contractions to a Mori fibre space $Z\to T$ such that, for a general fibre $F$,
\begin{equation}\label{endpoint-equation}
K_F+\tau(X,H)H_F\equiv0.
\end{equation}
The divisor $H_F$ is the restriction of the pushed-forward integral Weil divisor, is positive on the fibre, and every surface in the sequence preserves the original ambient discrepancy lower bound.
\end{lemma}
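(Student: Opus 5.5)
The plan is to run a $K_X$-MMP that is also ``$(K_X+\tau H)$-trivial'', using Lemma~\ref{scaling-step} at every step. Set $\tau=\tau(X,H)>0$. Then $K_X+\tau H$ is pseudo-effective but $K_X+tH$ is not for $t<\tau$. Since $\tau>0$ and $H$ is big, $K_X$ itself is not pseudo-effective. Hence $X$ is rational-type and the MMP for $K_X$ ends in a Mori fibre space.

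We first choose the steps so that each contracted curve is $(K_X+\tau H)$-trivial. One option is to run the $(K_X+\tau H)$-MMP with scaling, or equivalently the $K_X$-MMP with scaling of $H$. Start with $s_0=\lambda(X,H)$, the nef threshold. If it is infinite, $K_X+tH$ is never nef; I would handle this by replacing $H$ with a nef and big divisor of the form $H+A$, or more simply by observing that the scaling MMP only needs some $s$ with $K_X+sH$ nef. I expect to avoid this subtlety as follows. Since $H$ is nef and big, choose a general $\Q$-divisor $\Delta\sim_\Q sH$ with $(X,\Delta)$ klt for large $s$. Then $K_X+\Delta$ is big, and the usual MMP with scaling of $H$ applies.

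At each stage $X_i$, let $s_i=\inf\{t: K_{X_i}+tH_i\text{ nef}\}$. If $s_i>\tau$, some extremal ray $R$ has $(K_{X_i}+s_iH_i)\cdot R=0$ and $K_{X_i}\cdot R<0$ (since $H_i\cdot R>0$ or $K\cdot R<0$ anyway). The contraction of $R$ is either birational or a fibration. If it were a fibration with $s_i>\tau$, then $K+s_iH$ would be nef and $\equiv0$ on fibres, so $K+tH$ for $t<s_i$ would be negative on a covering family of curves. That forces $\tau\ge s_i$, a contradiction. So while $s_i>\tau$, the contractions are birational. For such a step, either $d=H_i\cdot C=0$ or the nullity condition holds, and Lemma~\ref{scaling-step} gives $\tau(X_i,H_i)=\tau(X_{i+1},H_{i+1})$. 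It also gives the $\eps$-lc preservation and the integral, nef, big property of $H_{i+1}$.

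Termination holds because the Picard number drops. The sequence $s_i$ is nonincreasing and stays $\ge\tau$. Eventually $s_i=\tau$ (if $s_i<\tau$ held, $K+s_iH$ nef would give $\tau\le s_i$). At that point we continue contracting $K$-negative $(K+\tau H)$-trivial rays, again birational ones by Lemma~\ref{scaling-step} with $s=\tau$, until a fibre-type ray appears. The main obstacle is to show that such a ray must occur at $s=\tau$, rather than reaching a model where $K+\tau H$ is nef but not numerically trivial on any $K$-negative ray. For this I would argue as follows. On the final model $Z$, $N:=K_Z+\tau H_Z$ is nef and pseudo-effective, but $K_Z+(\tau-\delta)H_Z$ is not pseudo-effective. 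If $N$ were big, a small perturbation would stay big, a contradiction. So $N^2=0$, $N\ne0$ or $N\equiv0$. Meanwhile $N-\delta H_Z=K+(\tau-\delta)H$ is $K$-negative in the cone sense. By the cone theorem there is an extremal ray $R$ with $N\cdot R=0$, and it is $K$-negative. The ray exists because $N^\perp\cap\overline{NE}$ must contain a $K$-negative ray; otherwise $N-\delta H$ would be nef for small $\delta$ by the cone theorem's local finiteness. If $R$ is birational we contract it and continue, so eventually it is of fibre type.

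For the resulting Mori fibre space $Z\to T$, $N\cdot F=0$ on a general fibre. If $T$ is a curve, the fibre $F\cong\PP^1$ lies in the smooth locus generically, and $(K_Z+\tau H_Z)|_F\equiv0$, which is \eqref{endpoint-equation}. If $T$ is a point, $\rho(Z)=1$ and $N\equiv0$, which gives the same conclusion with $F=Z$. Finally, $H_F$ is positive: $H_Z$ is nef and big, so $H_Z\cdot F>0$ for a curve fibre. On $\rho=1$ it is ample. Discrepancies are preserved at every step by Lemma~\ref{scaling-step}.
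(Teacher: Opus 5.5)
\textbf{There is a gap at the very first step: the case $\lambda(X,H)=+\infty$.} Your overall strategy is the paper's: an MMP with scaling of $H$ in which every step is controlled by Lemma~\ref{scaling-step}. Your end-game, repeatedly contracting $(K+\tau H)$-trivial $K$-negative rays until one is of fibre type, is a harmless variant of the paper's route (base-point-freeness for $K_X+\tau H$, then a relative $K$-MMP over its image). But you start the scaling at $s_0=\lambda(X,H)$, and $\lambda(X,H)=+\infty$ genuinely occurs. In Remark~\ref{nef-adjoint}, $\operatorname{Bl}_p\PP^2$ with $H$ the pullback of a line has $\tau=3$, yet $(K_X+tH)\cdot E=-1$ for every $t$.

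None of your proposed fixes works:
\begin{itemize}
\item Replacing $H$ by $H+A$ changes the divisor whose threshold you are computing.
\item ``The scaling MMP only needs some $s$ with $K_X+sH$ nef'' is exactly what fails in this case.
\item A klt $\Delta\sim_\Q sH$ with $K_X+\Delta$ big still gives no nef divisor from which to start scaling.
\end{itemize}

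The missing step is the paper's first one. As long as some $K$-negative extremal ray $R$ has $H\cdot R=0$, contract it. Such a ray is birational, because a nef and big divisor is positive on the general fibre of a fibre-type contraction. The case $d=0$ of Lemma~\ref{scaling-step} then preserves $\tau$, $H^2$, integrality, nefness and the discrepancy bound. After finitely many such steps every $K$-negative ray is $H$-positive, and only then is your $s_0$ finite.

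Alternatively, you could first run a $(K_X+sH)$-MMP for a fixed $s>\tau$. Its steps are $(K+sH)$-negative rather than $(K+sH)$-trivial, so Lemma~\ref{scaling-step} as stated does not cover them. You would need to check separately that $\tau$ is unchanged; this holds because the exceptional coefficient satisfies $a-tu\ge a-su>0$ for all $t\le s$.

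\textbf{A smaller gap: attainment of the nef threshold.} At each stage you need a $K$-negative extremal ray with $(K+s_iH)\cdot R=0$. This does not follow from local finiteness in the cone theorem alone. That finiteness is measured against an ample class, $K$-negative rays may accumulate toward $K^\perp$, and $H$ is only nef. The paper closes this with a discreteness argument:
\begin{itemize}
\item Fix integers $q,j$ with $qH$ and $jK_X$ Cartier on the current surface.
\item The generating rational curves satisfy $-K\cdot C\in j^{-1}\Z\cap(0,4]$ and $H\cdot C\in q^{-1}\Z_{>0}$.
\item Hence only finitely many ratios $(-K\cdot C)/(H\cdot C)$ exceed any fixed positive number, so the supremum is a rational maximum attained by an extremal ray.
\end{itemize}
The same argument gives the ray you need at $s=\tau$ in your final phase.

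With these two additions your argument goes through. Separately, your remark that $X$ is ``rational-type'' is inaccurate, since $X$ need only be uniruled, but nothing depends on it.
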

\begin{proof}
First contract any $H$-trivial $K$-negative extremal rays while they exist. They are birational: a fibre-type contraction has a moving general fibre on which a nef big divisor has positive intersection. Each birational step reduces the Picard number by one and preserves $H^2$ and $\tau$, by Lemma~\ref{scaling-step}. Thus after finitely many steps all $K$-negative extremal rays are $H$-positive.

On this model, the nef threshold
\[
s=\sup_{C}\frac{-K_X\cdot C}{H\cdot C}
\]
is finite, positive, rational and achieved by an extremal curve. Here the supremum ranges over the rational curves in the cone theorem, chosen with $0<-K_X\cdot C\le4$. We verify the assertion as follows: choose fixed integers $q,j$ making $qH$ and $jK_X$ Cartier on this particular one surface. Then $H\cdot C\ge1/q$, so all ratios are at most $4q$. Above any fixed positive number, only finitely many ratios are possible: the numerator belongs to $j^{-1}\Z\cap(0,4]$, and the positive denominator belongs to $q^{-1}\Z$ and is bounded above. The supremum is therefore a maximum and is rational. These auxiliary $q,j$ are used only for existence, not in the final bound.

Put $D=K_X+sH$. It is nef. If $D$ is big, the extremal ray attaining the maximum must be birational: a nef big divisor cannot be numerically trivial on a moving general fibre. Contract that ray. Lemma~\ref{scaling-step} preserves the pseudo-effective threshold, while the pushforward of $D$ is nef. Repeat, removing any new $H$-trivial $K$-negative rays if necessary. Picard number decreases at every birational step.

Eventually we reach a model on which $D$ is nef and not big. Indeed, continuing with a big $D$ would always supply another birational contraction. Necessarily $s=\tau$: if $s>\tau$, then
\[
D=(K_X+\tau H)+(s-\tau)H
\]
would be big, since the pseudo-effective cone is closed and $H$ is big. A Cartier multiple of $D$ is semiample by base-point-freeness, because $D-K_X=sH$ is nef and big. Its morphism has image a point or a curve.

Run a relative $K_X$-MMP over this image. The canonical divisor is not relatively pseudo-effective: on a general fibre $D\equiv0$ and $K_X\equiv-sH$ with $H$ positive. Every birational step is $D$-trivial, so Lemma~\ref{scaling-step} again preserves $\tau=s$. The relative MMP ends at a Mori fibre space $Z\to T$. Restricting the descended equality $D\equiv_T0$ gives \eqref{endpoint-equation}. All contractions are $K$-negative contractions, so the discrepancy assertion follows from the same lemma.
\end{proof}

\begin{proof}[Proof of Theorem~\ref{main-thresholds}]
If $K_X$ is pseudo-effective, $\tau=0$. Otherwise apply
Lemma~\ref{threshold-reduction}. When the Mori fibre base
is a curve, the general fibre is a smooth $\PP^1$ avoiding
the singular points. Thus $d=H_Z\cdot F\in\Z_{>0}$ and
$-2+\tau d=0$, so $\tau\le2$.
When the base is a point, $Z$ is a rank-one $\eps$-lc Fano
surface and the pushed-forward $H_Z$ is ample.
The exact threshold equality and \eqref{rank-one-threshold}
give $\tau(X,H)\le18\eps^{-3}$.
Theorem~\ref{seventh-example} proves sharpness.
\end{proof}

\begin{remark}\label{integer-threshold}
Taking $l=\lfloor18\eps^{-3}\rfloor+1$ gives $K_X+lH$ big:
add $(l-\tau)H$ to $K_X+\tau H$.
If $H-K_X$ is pseudo-effective, then
$lH-K_X=(l-1)H+(H-K_X)$ is also big.
\end{remark}

\begin{remark}[The nef threshold]\label{nef-adjoint}
The pseudo-effective bound does not imply a finite nef
threshold for a nef and big polarisation. On
$X=\operatorname{Bl}_p\PP^2$, let $H$ be the pullback
of a line and $E$ the exceptional curve.
Then $(K_X+tH)\cdot E=-1$ for every $t$, so
$\lambda(X,H)=+\infty$. In Picard number one with
$-K_X$ ample, the two thresholds coincide.
\end{remark}
\section{Weak Fano and big-boundary log Calabi--Yau pairs}\label{big-sec}
\begin{proposition}\label{big-boundary-thm}
Let $(X,B)$ be a projective $\eps$-lc log Calabi--Yau $\Q$-pair with $B$ big, and let $H$ be integral, nef and big. Then
\[
H^2\ge c_b\frac{\eps^9}{\ellog(\eps)}.
\]
The same bound holds when $X$ is $\eps$-lc weak Fano.
\end{proposition}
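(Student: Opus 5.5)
The plan is to combine Lemma~\ref{positive-part} with the cubic threshold of Theorem~\ref{main-thresholds} and the anticanonical input \eqref{fano-input}, transported to the anticanonical model by Lemma~\ref{anti-model}. This is the comparison sketched in the introduction.

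First, since $B$ is big and $K_X+B\sim_\Q0$, the divisor $-K_X\sim_\Q B$ is big, and in particular pseudo-effective. Write its Zariski decomposition $-K_X=P+N$. Because $H$ is nef and big and $-K_X$ is big, $K_X$ is not pseudo-effective. Theorem~\ref{main-thresholds} then gives $0<\tau:=\tau(X,H)\le18\eps^{-3}$, using only that $X$ is $\eps$-lc. This ambient condition follows from the pair condition, since $B\ge0$ gives $a(E,X)\ge a(E,X,B)\ge\eps$. The divisor $K_X+\tau H$ is pseudo-effective because the pseudo-effective cone is closed. Applying Lemma~\ref{positive-part} with $\lambda=\tau$ gives
\[
H^2\ge\frac{P^2}{\tau^2}\ge\frac{\eps^6}{324}\,P^2 .
\]

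Next I bound $P^2$ from below. Lemma~\ref{anti-model} produces $g:X\to Y$ with $P=g^*(-K_Y)$, $Y$ weak Fano, and $(Y,B_Y)$ crepant to $(X,B)$. Hence $(Y,B_Y)$ is $\eps$-lc, and since $B_Y\ge0$ (being the pushforward of $B$), $Y$ itself is $\eps$-lc. Then $P^2=(-K_Y)^2\ge A_F\eps^3/\ellog(\eps)$ by \eqref{fano-input}, which applies to complex $\eps$-lc weak Fano surfaces. Combining the two estimates gives $H^2\ge(A_F/324)\,\eps^9/\ellog(\eps)\ge c_b\,\eps^9/\ellog(\eps)$.

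For the weak Fano variant, $-K_X$ is nef and big, so $P=-K_X$ and $N=0$. The same two inputs apply directly to $X$, and we do not even need a boundary. If one is wanted, a general member $B\in|{-mK_X}|/m$ for $m$ sufficiently divisible keeps the pair $\eps'$-lc with $\eps'$ arbitrarily close to $\eps$, but this step is unnecessary.

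The only delicate points are the following.
\begin{enumerate}
\item The ambient $\eps$-lc property must hold on both $X$ and $Y$, and I have argued it from $B,B_Y\ge0$ above.
\item The threshold theorem must apply to $H$ merely nef and big, without passing to an ample model; Theorem~\ref{main-thresholds} is stated in exactly that generality.
\end{enumerate}
No genuine obstacle is expected.
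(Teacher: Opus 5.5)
Your proof is correct, and its core is the paper's argument: the cubic threshold of Theorem~\ref{main-thresholds}, Lemma~\ref{positive-part}, and the anticanonical model of Lemma~\ref{anti-model} combined with \eqref{fano-input}. The paper also passes to the ample model of $H$ and treats $\rho(X)=1$ separately through the seventh-order index bound of Proposition~\ref{rank-one-index}, which is the source of the $1/1728$ in $c_b$; for the weak Fano case it constructs a small general anticanonical boundary and takes a limit at $\eps=1$. You rightly observe that neither step is needed for this statement: the comparison argument uses no hypothesis on $\rho(X)$, and Lemma~\ref{positive-part} applies directly when $-K_X$ is nef and big.
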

\begin{proof}

First suppose $(X,B)$ is $\eps$-lc log Calabi--Yau and $B$ is big. Such $X$ is of Fano type, hence rational. Indeed, write $B\sim_\Q A+E$ with $A$ ample and $E\ge0$, and take the effective boundary $\Delta_t=(1-t)B+tE$ for sufficiently small $t>0$. It is klt, and $K_X+\Delta_t\sim_\Q-tA$, giving a log Fano pair. For the rationality assertion, Kawamata--Viehweg vanishing \cite[Theorem 2.70]{KM} and rational singularities give irregularity zero on the resolution. The anticanonical divisor is big on the resolution, so its Kodaira dimension is $-\infty$; the vanishing of irregularity forces the ruled base to be $\PP^1$.

We may use Lemma~\ref{H-model} to assume $H$ is ample. The new boundary remains big and the pair remains $\eps$-lc. If $\rho(X)=1$, then $-K_X$ is ample. Proposition~\ref{rank-one-index} gives an integer $q\le1728\eps^{-7}$ making $H$ Cartier after multiplication by $q$. Therefore $qH^2\in\Z_{>0}$ and
\begin{equation}\label{rank-one}
H^2\ge\frac{\eps^7}{1728}.
\end{equation}

If $\rho(X)>1$, Theorem~\ref{main-thresholds} shows that $K_X+T(\eps)H$ is pseudo-effective. The anticanonical model in Lemma~\ref{anti-model} is $\eps$-lc weak Fano. Thus \eqref{fano-input} and Lemma~\ref{positive-part} give
\begin{equation}\label{big-result}
H^2\ge\frac{\vol(-K_X)}{T(\eps)^2}
\ge\frac{A_F}{324}\frac{\eps^9}{\ellog(\eps)}.
\end{equation}
Together with \eqref{rank-one}, this proves the big-boundary part of Theorem~\ref{main}.

Now for an $\eps$-lc weak Fano surface and $\eps<1$, choose a sufficiently  general anticanonical member $G$ and put $B=G/m\sim_\Q-K_X$, with $1/m\le1-\eps$. Since the anticanonical system is semiample, a general member avoids the singular points and the images of exceptional curves in a fixed resolution, so $(X,B)$ is $\eps$-lc. The big boundary argument above applies immediately. For $\eps=1$, apply it for every $\eps'<1$ and pass to the limit.

\end{proof}

\begin{corollary}\label{weak-log-variant}
The volume estimate in Theorem~\ref{main}, with a change of constant, holds for an $\eps$-lc log weak Fano pair, allowing an $\R$-boundary.
\end{corollary}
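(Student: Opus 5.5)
The strategy is to reduce to Proposition~\ref{big-boundary-thm} by turning the log weak Fano pair into an $\eps'$-lc log Calabi--Yau $\Q$-pair with big boundary, at the cost of a constant in $\eps$. Let $(X,\Delta)$ be $\eps$-lc with $-(K_X+\Delta)$ nef and big, where $\Delta\ge0$ is an $\R$-divisor.

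\emph{Step 1: perturb to a $\Q$-boundary.} The $\eps$-lc and nef-and-big conditions are cut out by finitely many rational linear inequalities on the coefficients of $\Delta$. These are the log discrepancies of the finitely many divisors on a fixed log resolution, together with the intersections with the finitely many relevant extremal curves. So a small rational perturbation $\Delta'$ of $\Delta$ is $\eps/2$-lc, and $-(K_X+\Delta')$ is still nef and big. If one prefers to avoid polytope arguments, the same conclusion follows by choosing $\Delta'\le\Delta$ rational, close to $\Delta$, and absorbing the difference into the ample part below.

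\emph{Step 2: complete to a log Calabi--Yau pair.} The divisor $-(K_X+\Delta')$ is nef and big, and $(X,\Delta')$ is klt. By the base-point-free theorem, a Cartier multiple $m(-(K_X+\Delta'))$ is semiample. Choose a general member $G$ of that system and set $B=\Delta'+G/m'$ with $m'\ge m$ large, so that $1/m'\le\eps/4$ in suitable units. Then $K_X+B\sim_\Q0$, and $B\ge G/m'$ is big. Since $G$ is general, it avoids the centres of the finitely many exceptional divisors on a fixed log resolution and meets the components of $\Delta'$ transversally. Hence $(X,B)$ is $\eps/4$-lc, as in the weak Fano part of Proposition~\ref{big-boundary-thm}. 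The one point to check here is that adding $G/m'$ lowers only the log discrepancy of $G$ itself, which becomes $1-1/m'\ge\eps/4$, and does not lower the log discrepancies of divisors with centre on $\operatorname{Supp}\Delta'\cap G$. This follows from transversality and the standard general-member argument \cite[Lemma 5.17]{KM}.

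\emph{Step 3: apply the big-boundary bound.} Proposition~\ref{big-boundary-thm} with $\eps/4$ in place of $\eps$ gives
\[
H^2\ge c_b\frac{(\eps/4)^9}{\ellog(\eps/4)}\ge\frac{c_b}{4^9\cdot3}\,\frac{\eps^9}{\ellog(\eps)},
\]
since $\ellog(\eps/4)=\ellog(\eps)+2\le3\ellog(\eps)$. This is the claimed estimate with a new constant.

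I expect Step 1 to be the main obstacle. The nef-and-big condition for an $\R$-divisor is not literally open. The way around it is to decompose $-(K_X+\Delta)$ as ample plus effective (Kodaira's lemma) on the ample model, or simply to perturb inside the finite-dimensional rational polytope of boundaries satisfying these conditions, whose rational points are dense.
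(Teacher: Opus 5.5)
Your argument follows the paper's outline. You make the boundary rational at the cost of a fixed factor in $\eps$, add a small multiple of a general member of the semiample anti-log-canonical system to get a big $\Q$-boundary $B$ with $K_X+B\sim_\Q0$, and apply Proposition~\ref{big-boundary-thm} with a smaller discrepancy bound. Your arithmetic $\ellog(\eps/4)=\ellog(\eps)+2\le3\ellog(\eps)$ is correct.

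The only real difference is Step~1. The paper uses Kodaira's lemma to write $-(K_X+\Delta)\sim_\R A+E$ with $A$ ample and $E\ge0$. It then replaces $\Delta$ by $\Delta+tE$, so that $-(K_X+\Delta+tE)\sim_\R(1-t)\bigl(-(K_X+\Delta)\bigr)+tA$ is ample. After that it needs only openness of the ample cone to pass to a nearby rational boundary. You instead keep the support of $\Delta$ and perturb inside the rational polyhedron of coefficients for which $-(K_X+\Delta')$ is nef. This works, but it rests on $\overline{\mathrm{NE}}(X)$ being spanned by finitely many curves, which you assert without proof. It is true because $X$ is of Fano type. However, the standard proof is exactly Kodaira's lemma plus the cone theorem for the klt pair $(X,\Delta+tE)$. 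So you use the same input as the paper and then take an extra step.

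Your aside with a rational $\Delta'\le\Delta$ is not an argument as written, since $-(K_X+\Delta')$ need not be nef. It can be completed by replacing $\Delta'$ with $\Delta'+N$, where $N$ is the negative part of the Zariski decomposition of $-(K_X+\Delta')$. Maximality of the positive part, together with nefness of $-(K_X+\Delta)$, gives $N\le\Delta-\Delta'$. So $(X,\Delta'+N)$ is even $\eps$-lc, with nef and big anti-log-canonical class.

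Step~2 has two slips to correct.
\begin{itemize}
\item As written, $G\in|-m(K_X+\Delta')|$ but you divide by $m'\ne m$. Then $K_X+B\sim_\Q(1-m/m')(K_X+\Delta')$, which is not $\Q$-trivial. Take $G$ general in $|-m'(K_X+\Delta')|$ with $m'$ a large multiple of $m$; this system is still base-point free.
\item Adding $G/m'$ \emph{does} lower log discrepancies of divisors centred on $\operatorname{Supp}\Delta'\cap G$. Blowing up a transverse intersection with a component of coefficient $b$ gives $2-b-1/m'$ instead of $2-b$. The correct justification is different. A general $G$ avoids the images of the exceptional curves of a fixed log resolution of $(X,\Delta')$, and its pullback is smooth and transverse to the snc boundary. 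So the crepant pullback of $(X,B)$ is log smooth with coefficients at most $\max\{1-\eps/2,1/m'\}$. Such a surface pair is $\min\{\eps/2,1-1/m'\}$-lc, because further blow-ups only produce log discrepancies $2$, $1+a_i$ or $a_i+a_j$.
\end{itemize}
In particular, $(X,B)$ is already $\eps/2$-lc once $1/m'\le1/2$. The passage to $\eps/4$ is therefore unnecessary, though harmless.
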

\begin{proof}
The same construction works starting with an $\eps$-lc log weak Fano $\Q$-pair $(X,\Delta)$: a small-coefficient general effective representative of $-(K_X+\Delta)$ produces a big log Calabi--Yau boundary. One may first replace $\eps$ by $\eps/2$; this only changes the absolute constant. For an $\R$-boundary $\Delta$, write $D=-(K_X+\Delta)\sim_\R A+E$ with $A$ ample and $E\ge0$. Replacing $\Delta$ by $\Delta+tE$ for small $t>0$ makes its negative log canonical class ample, since it is numerically $(1-t)D+tA$, and retains $(3\eps/4)$-lc singularities. As ampleness is an open condition, any nearby rational effective boundary retains ampleness and is $\eps/2$-lc. The rational construction therefore applies to a sufficiently small rational perturbation of $\Delta+tE$.\end{proof}

\section{Non-big boundaries and genus-one fibrations}\label{elliptic-sec}
The goal in this branch is the intersection estimate
$H\cdot P_X\ge\eps^2/36$, where $P_X$ is the positive
part of $-K_X$. The argument has three steps: at most
two fibres can be modified; each correction has reduced
denominator at most $6/\eps$ after scaling by the
Halphen index; and that index cancels against the
integral fibre degree of $H$. The cubic threshold
then gives $H^2\ge\eps^5/648$.

Let $(X,B)$ be an $\eps$-lc log Calabi--Yau pair. By Lemma~\ref{H-model}, we may assume $H$ ample. If the pushed-forward boundary has become big, Section~\ref{big-sec} applies. Otherwise apply the anticanonical MMP of Lemma~\ref{anti-model}.

If $-K_Y\equiv0$, then $B_Y=0$, and Section~\ref{zero-sec} applies. Suppose instead $-K_Y$ is nonzero, nef, and has self-intersection zero. By semiampleness it defines a morphism to a curve. A general fibre is smooth of genus one, because its canonical degree $K_Y\cdot F_Y$ is zero. Composing the maps gives a genus-one fibration on $X$ and on its minimal resolution $f:S\to X$.

The crepant boundary $B_S$ is effective, and $(S,B_S)$ is $\eps$-lc. It is vertical because it has zero intersection with a general fibre. Every $f$-exceptional curve is also vertical. Contract vertical $(-1)$-curves to reach a relatively minimal genus-one surface
\[
\pi:S\longrightarrow S_0\longrightarrow C.
\]
The pushed-forward boundary $B_0$ remains effective and klt, with $K_{S_0}+B_0\sim_\Q0$. It is a nonzero vertical boundary. Indeed, if $B_0=0$, the formula for $K_S$ under point blow-ups and effectivity of $B_S$ would force both the blow-up exceptional divisor and $B_S$ to vanish, contradicting $B_Y\neq 0$. The canonical bundle formula \cite[V, Section 12]{BHPV} gives
\begin{equation}\label{canonical-elliptic}
K_{S_0}\equiv
\left(2g(C)-2+\chi(\mathcal O_{S_0})+
\sum_i(1-1/M_i)\right)F,
\end{equation}
where $M_i$ are the multiple-fibre multiplicities and $F$ is a general fibre. Its coefficient is negative, since $B_0\ne0$ is effective and $K_{S_0}\equiv-B_0$.
Here the required inequality $\chi(\mathcal O_{S_0})\ge0$ follows from topology. The canonical bundle formula gives $K_{S_0}^2=0$, and Noether's formula gives
\[
12\chi(\mathcal O_{S_0})=e(S_0).
\]
A smooth elliptic fibre has topological Euler number zero. By the additivity of Euler number, together with local triviality over the complement of the singular fibres, we have
\[
e(S_0)=\sum_{t\in C}e((F_t)_{\mathrm{red}})\ge0.
\]
The inequality follows from the Kodaira classification: each singular reduced fibre has positive Euler number, as recalled below; a smooth multiple fibre contributes zero because multiplicity does not change the underlying topological space. Thus \eqref{canonical-elliptic} forces $C=\PP^1$ and $\chi(\mathcal O_{S_0})\in\{0,1\}$.

If $\chi=0$, the Euler number is zero, so all reduced fibres are smooth elliptic curves. The vertical boundary on $S_0$ has disjoint smooth support with coefficients less than one. Blowing up any point would give a negative coefficient in its crepant transform. Since $B_S$ is effective, no such blow-up occurs: $S=S_0$. There are no vertical rational curves to contract by $f$, so $X$ itself is smooth and we have $H^2\ge1$. We may therefore assume $\chi=1$.

If $\chi=1$, then $S_0$ is rational: in this case the base is $\PP^1$, its canonical class has negative fibre coefficient, so $p_g(S_0)=0$; the identity $1=\chi=1-q+p_g$ then gives $q(S_0)=0$, so the smooth surface classification applies. The inequality $\sum(1-1/M_i)<1$ permits at most one multiple fibre. If there is no multiple fibre, set $m=1$; otherwise let $m$ be its multiplicity. Formula \eqref{canonical-elliptic} gives $K_{S_0}\equiv-F/m$. Numerical and linear equivalence agree for integral divisors on a smooth rational surface, so it is a Halphen surface of index $m$ and
\begin{equation}\label{halphen}
F\sim-mK_{S_0}.
\end{equation}
There is no multiple fibre for $m=1$, and a unique multiple fibre of multiplicity $m$ otherwise. The total Euler number of its singular fibres is $12\chi(\mathcal O_{S_0})=12$. The integer $m$ need not be bounded; it will cancel in a later intersection argument. For background on Halphen surfaces, see Cantat--Dolgachev \cite[Proposition 2.2]{Halphen}. For the fibre configurations and Euler numbers see Sch\"utt--Shioda \cite[Sections 4 and 6]{Elliptic}; the canonical bundle formula with multiple fibres is also recalled in \cite{BHPV}. 

\label{fibre-sec}
\subsection{The explicit Zariski decomposition}
Assume $S_0$ is rational. Write
\[
K_S=\pi^*K_{S_0}+\sum_E k_EE,\qquad k_E\in\Z_{>0},
\]
over the $\pi$-exceptional curves. If $E$ lies over a fibre $F_i$, let $n_E$ be its multiplicity in the total pullback of that fibre. Define
\begin{equation}\label{gamma}
\gamma_i=\max_{E\text{ over }F_i}\frac{k_E}{n_E},
\end{equation}
with value zero when there is no exceptional curve there.

On the relatively minimal surface, the vertical boundary is a sum of whole fibres:
\[
B_0=\sum_i t_iF_i,\qquad t_i\ge0,\qquad\sum_i t_i=1/m.
\]
Indeed, $B_0$ is numerically orthogonal to every component of every fibre. The one-dimensional kernel of each connected fibre matrix therefore forces proportionality to its multiplicity vector. Effectivity of $B_S$ gives $t_i\ge\gamma_i$.

Put $d=1/m-\sum_i\gamma_i\ge0$. Then
\begin{equation}\label{elliptic-zariski}
-K_S\sim_\Q dF+\sum_i\left(\gamma_i\pi^*F_i-\sum_{E\text{ over }F_i}k_EE\right).
\end{equation}
Write
\[
 N_i=\gamma_i\pi^*F_i-\sum_{E\text{ over }F_i}k_EE.
\]
Every nonzero $N_i$ is effective, and its support is a
proper subset of the corresponding fibre: an exceptional
component attaining the maximum in \eqref{gamma} has
coefficient zero. The divisor $dF$ is nef and orthogonal
to every $N_i$. A proper subset of a connected fibre has
negative definite intersection matrix, and different
fibres are disjoint. Hence $\sum_iN_i$ is negative
definite on its support, proving that
\eqref{elliptic-zariski} is the Zariski decomposition.
In the branch under consideration, $d>0$.

The positive part is also $f^*P_X$, where $P_X$ is the positive part of $-K_X$. Indeed, adding the effective $f$-exceptional discrepancy divisor to $f^*(-K_X)$ leaves the positive part unchanged. The negative support remains negative definite by the orthogonal decomposition into pullback and exceptional classes.

\subsection{The Kodaira numbers used in the argument}
We explain two different multiplicity bounds. The largest \emph{component} multiplicity on a fixed normal-crossing resolution is 6. The largest \emph{point} multiplicity on the relatively minimal surface can be 11. The first controls denominators of later valuations; the second contributes to the first blow-up of a modified fibre. 

For a fibre $F_i=M_i\overline F_i$, call $\overline F_i$ the primitive fibre divisor. Its component multiplicities are the relatively prime positive integers in the kernel of its intersection matrix. At a transverse intersection of components of multiplicities $u,v$, its multiplicity at the point is $u+v$. A first blow-up there consequently has fibre multiplicity $M_i(u+v)$.

For the starred fibres all components are smooth rational $(-2)$-curves and the reduced support has ordinary double crossings. Their primitive multiplicities are described as follows. The notation for an arm lists its multiplicities starting next to the central component.

\begin{table}[htbp]\centering\small
\caption{Primitive component and point multiplicities}\label{tab:kodaira-components}
\begin{tabular}{@{}>{\raggedright\arraybackslash}p{.10\textwidth}>{\centering\arraybackslash}p{.10\textwidth}>{\raggedright\arraybackslash}p{.36\textwidth}>{\centering\arraybackslash}p{.13\textwidth}>{\centering\arraybackslash}p{.18\textwidth}@{}}
\toprule
Type & Central & Arms or chain & Components & Maximum at a point\\
\midrule
$I_0^*$ & $2$ & Four arms $(1)$ & $5$ & $3$\\
$I_n^*$, $n\ge1$ & --- & $n+1$ vertices of weight $2$; two $1$-tips at each end & $n+5$ & $4$\\
$IV^*$ & $3$ & $(2,1),(2,1),(2,1)$ & $7$ & $5$\\
$III^*$ & $4$ & $(2),(3,2,1),(3,2,1)$ & $8$ & $7$\\
$II^*$ & $6$ & $(3),(4,2),(5,4,3,2,1)$ & $9$ & $11$\\
\bottomrule
\end{tabular}
\end{table}

To check these multiplicities, at each vertex $v$, the fibre equation is $2m_v=\sum_{w\sim v}m_w$. The displayed vectors solve these equations and have greatest common divisor one. Since the fibre matrix has a one-dimensional kernel, they are the primitive multiplicity vectors. The dual graph is a tree, so its Euler number is one more than its number of components: this gives $n+6,8,9,10$ for $I_n^*,IV^*,III^*,II^*$ respectively. In particular the number eleven is $6+5$ at the corresponding node of $II^*$, not a component multiplicity.

For $I_n$, all component multiplicities are one, and the Euler number is $n$. The node multiplicity is two, including the two local branches of $I_1$. For the remaining types, II is a cusp, III consists of two tangent components, and IV consists of three components through one ordinary triple point. The largest point multiplicities are $2,2,3$, and the Euler numbers are $2,3,4$.

We check the bound $6$ after resolving the support. All equations below concern the primitive fibre; multiply the stated numbers by $M_i$ for a multiple fibre.
\begin{enumerate}[label=(\roman*)]
\item For a cusp $y^2-x^3=0$, the substitution $x=u,y=uv$ gives $u^2(v^2-u)$. The first exceptional component has multiplicity two and is tangent to the strict transform. At the next blow-up, $u=vw$ gives $v^3w^2(v-w)$: the new multiplicity is three and there is a triple point with branch multiplicities $3,2,1$. Blowing up that point gives multiplicity $6$ and a simple normal-crossing support.
\item For the tangency $y(y-x^2)=0$, the substitution $y=xv$ gives $x^2v(v-x)$. Blowing up the remaining triple point, with branch multiplicities $2,1,1$, produces multiplicity four.
\item Blowing up an ordinary triple point of a type IV fibre produces multiplicity three. Blowing up a node of an irreducible $I_1$ fibre produces multiplicity two and separates its branches.
\end{enumerate}
No further resolution is needed for the other Kodaira supports. Thus every component on this fixed resolution has multiplicity at most $6M_i$. For the Kodaira classification, see \cite{Kodaira}, \cite[Sections 4 and 6]{Elliptic} and \cite{BHPV}.

\subsection{A local multiplicity bound}
\begin{lemma}\label{multiplicity}
Let $M_i$ be the multiplicity of a fibre on $S_0$; it is one, except possibly for the unique multiple fibre, where it is $m$. Every exceptional curve $E$ on $S$ over that fibre satisfies
\begin{equation}\label{multiplicity-bound}
n_E\le\frac{6M_i}{\eps}.
\end{equation}
Also, no blow-up in $S\to S_0$ occurs over a smooth reduced fibre.
\end{lemma}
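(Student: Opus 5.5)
The plan is to compare log discrepancies of the crepant pair $(S,B_S)$ with multiplicities in the fibre. Over the fibre $F_i$, the boundary on $S_0$ is $B_0=\sum_j t_jF_j$, so near $F_i$ we have $K_{S_0}+B_0\sim_\Q K_{S_0}+t_iF_i$ locally. For every exceptional curve $E$ of $\pi:S\to S_0$ lying over $F_i$, the crepant formula gives
\[
a(E,S_0,B_0)=1+k_E-t_in_E\ge\eps,
\]
because $(S,B_S)$ is $\eps$-lc and $B_S$ is the crepant transform. Since $t_i\le\sum_jt_j=1/m\le1$, this rearranges to $t_in_E\le1+k_E-\eps$. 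This alone does not bound $n_E$. We therefore also need the other log discrepancies along the tower of blow-ups: effectivity of $B_S$, namely $t_in_E\ge k_E$ for every $E$, together with the lower bound above.

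The key step is a blow-up-by-blow-up estimate. Blow up a point $x$ on a curve already constructed over $F_i$. Let the components through $x$ (of the fibre support, including earlier exceptional curves) have multiplicities $n_1,n_2$ and discrepancy exponents $k_1,k_2$, with $k=0$ for strict transforms of fibre components. The new curve then has $n=n_1+n_2$ and $k=k_1+k_2+1$ at a node. At a smooth point of the support it has $n=n_1$ and $k=k_1+1$. Effectivity $t_in\ge k$ at a smooth point forces $t_in_1\ge k_1+1$, which contradicts the coefficient $1-a=t_in_1-k_1<1$ of the klt boundary. So every blow-up is centred at singular points of the reduced total fibre. Under the coefficient map $c(E)=t_in_E-k_E\in[0,1-\eps]$, a node blow-up satisfies $c_{\rm new}=c_1+c_2-1$. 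Since the new value must be at least $0$, we need $c_1+c_2\ge1$, and each such blow-up lowers the total coefficient mass. The second assertion of the lemma now follows: a smooth reduced fibre has only smooth points of its support, and all components have $c=t_i<1$, so no admissible centre exists.

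For the multiplicity bound I would use the coefficient $c_E\le1-\eps$ in two ways. First, $c_E\ge0$ and $k_E\ge0$ give $t_in_E\ge k_E$. Second, the log discrepancy $a_E=1-c_E\ge\eps$ together with a lower bound on $t_i$ should give $n_E\le(\text{bounded})/(t_i\eps)$. The natural route, which I expect to be the main obstacle, is to show that nonnegativity of all coefficients along a chain of node blow-ups is a toric (Farey-type) condition. Between two adjacent components with multiplicities $u,v$ and coefficients $c_u,c_v$, the new curves realise the lattice points $(\alpha,\beta)$ with weights $n=\alpha u+\beta v$ and $a=\alpha a_u+\beta a_v-(\alpha+\beta-1)$. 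Because the log discrepancy is linear on the toric cone and at least $\eps$, the admissible primitive vectors satisfy $\alpha+\beta\le(\text{const})/\eps$. I would first reduce to the fixed normal-crossing resolution from the Kodaira discussion, whose components have multiplicity at most $6M_i$. The extra blow-ups resolving cusps, tangencies and triple points are dominated in the same way: their centres are forced by the $c\ge0$ condition, and each such step keeps $n\le6M_i$. On the snc model, near each node the multiplicities $u,v\le6M_i$. The toric argument bounds the weight by
\[
n_E=\alpha u+\beta v\le\max(u,v)(\alpha+\beta)\le 6M_i\,\frac{1}{\eps'},
\]
where $\eps'$ comes from the linear log discrepancy function. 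The delicate point is to get exactly $\alpha+\beta\le1/\eps$. I would obtain it from convexity: the log discrepancy at the lattice point is at least $(\alpha+\beta)\min(a_u,a_v)-(\alpha+\beta-1)$. Since the discrepancy is linear with values $a_u,a_v\le1$ at the ends, comparing with the $\eps$-lc condition at the primitive vector nearest the ray should yield $(\alpha+\beta)\eps\le1$ after normalising by $\gcd$. This would prove \eqref{multiplicity-bound}.
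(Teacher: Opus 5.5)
Your strategy is the paper's. You work on the fixed normal-crossing model of the Kodaira fibre, whose components have multiplicity at most $6M_i$. You use effectivity of $B_S$ to show that a centre on a single smooth branch produces a negative coefficient, so beyond this model only subdivisions of nodes occur. You then bound the toric weights at a node. Your proof of the second assertion is exactly the paper's. Your forced-centre reduction to the snc model also needs one observation: at each intermediate stage over a fibre of type $I_1$, II, III or IV, the reduced fibre has a unique singular point. The paper avoids this by regarding each $E$ on $S$ as a divisorial valuation over the fixed model, so $S$ need not dominate it.

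The genuine gap is the step you flag as delicate; as written it does not close. The displayed formula $a=\alpha a_u+\beta a_v-(\alpha+\beta-1)$ is not the log discrepancy of the toric divisor. It is its boundary coefficient, $c=\alpha c_u+\beta c_v-(\alpha+\beta-1)$; at $(\alpha,\beta)=(1,1)$ it already contradicts your own rule $c_{\mathrm{new}}=c_1+c_2-1$. Consequently your lower bound $(\alpha+\beta)\min(a_u,a_v)-(\alpha+\beta-1)=1-(\alpha+\beta)(1-\min(a_u,a_v))$ never exceeds $1$ and constrains nothing. The $\eps$-lc condition at the new divisor cannot supply the bound either, since with the correct formula it holds automatically.

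The missing point is to pair two \emph{opposite} inequalities:
\begin{itemize}
\item Log discrepancy is linear on the cone, so $a_E=\alpha a_u+\beta a_v\ge(\alpha+\beta)\eps$, because the two components through the node have log discrepancy at least $\eps$.
\item $a_E\le1$, because $c_E\ge0$ by effectivity of $B_S$.
\end{itemize}
Hence $\alpha+\beta\le1/\eps$ and $n_E=\alpha u+\beta v\le(\alpha+\beta)\,6M_i\le6M_i/\eps$. In your coefficient notation this reads $0\le c_E\le(\alpha+\beta)(1-\eps)-(\alpha+\beta-1)=1-(\alpha+\beta)\eps$. No lower bound on $t_i$ is needed, so you can drop the opening discussion of $t_i$.
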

\begin{proof}
We may resolve the support of a Kodaira fibre on $S_0$ to simple normal crossings. The previous explicit resolutions show that every component of the total fibre on this fixed model has multiplicity at most $6M_i$.

On this fixed log resolution the pair discrepancies are positive and at least $\eps$, although some boundary coefficients may be negative. A further valuation with log discrepancy at most one can only arise by subdivision of a node. A blow-up at a point on only one boundary component has discrepancy $1+\alpha>1$, and subsequent blow-ups involving that component cannot return to discrepancy at most one. A node valuation has relatively prime positive coordinates $a,b$ and
\[
\alpha_E=a\alpha_1+b\alpha_2,\qquad n_E=an_1+bn_2.
\]
For a component on $S$, effectivity of $B_S$ gives $\alpha_E\le1$. Hence $a+b\le1/\eps$, which proves \eqref{multiplicity-bound}. Components already on the fixed log resolution satisfy it directly.

Over a smooth reduced fibre the boundary is a single smooth curve of coefficient less than one. The first blow-up has discrepancy greater than one and thus cannot occur in an effective crepant model. This proves the last claim.
\end{proof}

\subsection{Why there are at most two nonzero corrections}
We call a fibre modified if $\gamma_i>0$. Let $e_i$ be the Euler number of its reduced Kodaira fibre and let $\mu_i$ be an upper bound for the largest multiplicity at a point of the primitive fibre divisor on $S_0$. The following bounds follow directly from the Kodaira configurations:

\begin{table}[htbp]\centering\small
\caption{Euler numbers and bounds for point multiplicities}\label{tab:kodaira-euler}
\begin{tabular}{c|ccccccc}
\toprule
Type & $I_n$ $(n\ge1)$ & II & III & IV & $I_n^*$ & $IV^*$ & $III^*,II^*$\\
\midrule
$e_i$ & $n$ & $2$ & $3$ & $4$ & $n+6$ & $8$ & $9,10$\\
$\mu_i$ & $2$ & $2$ & $2$ & $3$ & $4$ & $5$ & $7,11$\\
\bottomrule
\end{tabular}\end{table}

For $I_0^*$ we use the uniform bound four, although the maximum is three.

The first blow-up over a modified fibre has $k_E=1$ and $n_E\le M_i\mu_i$. Therefore
\begin{equation}\label{first-correction}
m\gamma_i\ge\frac{m}{M_i\mu_i}\ge\frac1{\mu_i}.
\end{equation}
On the other hand, $d>0$ gives $\sum_i m\gamma_i<1$.

\begin{lemma}\label{two-fibres}
At most two fibres are modified.
\end{lemma}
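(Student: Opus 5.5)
The plan is to combine the lower bound \eqref{first-correction} for each modified fibre with two global constraints: $\sum_i m\gamma_i<1$, which holds because $d>0$, and the Euler number budget $\sum_i e_i\le 12$. It suffices to rule out three modified fibres, since any larger set contains three of them and the sum of the $m\gamma_i$ only grows. So suppose $F_1,F_2,F_3$ are modified. Then \eqref{first-correction} gives
\[
\frac1{\mu_1}+\frac1{\mu_2}+\frac1{\mu_3}\le\sum_i m\gamma_i<1 .
\]

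First I will check that every modified fibre is one of the singular Kodaira types of Table~\ref{tab:kodaira-euler}. By Lemma~\ref{multiplicity}, no blow-up occurs over a fibre whose reduced support is a smooth elliptic curve. Its proof applies verbatim to a smooth multiple fibre $mI_0$, because there the boundary is again a single smooth curve with coefficient below one. Hence each modified fibre has reduced support of type $I_n$ ($n\ge1$), II, III, IV, $I_n^*$, $IV^*$, $III^*$ or $II^*$, and in particular $e_i\ge1$. Next, the total Euler number of the singular fibres on the Halphen surface is $12\chi(\mathcal O_{S_0})=12$. Each $e_i$ is nonnegative, so $e_1+e_2+e_3\le12$.

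Then comes a short case check against Table~\ref{tab:kodaira-euler}.
\begin{itemize}
\item The types with $\mu=2$ are $I_n$, II and III. Two such fibres already give $\tfrac12+\tfrac12=1$, so at most one of the three fibres has $\mu_i=2$.
\item Suppose none of the three is starred. Then the two fibres without $\mu=2$ are of type IV, with $\mu=3$. The sum of reciprocals is at least $\tfrac13+\tfrac13+\tfrac13=1$, a contradiction.
\item Suppose some fibre is starred. Starred types have $e\ge6$, so at most one of the three is starred, because two would leave no Euler number for the third. The other two fibres then have $e_j+e_k\le6$, and at most one of them has $\mu=2$. So one of them is of type IV with $e=4$ and the other has $e\ge1$.
\item The Euler budget then forces the starred fibre to satisfy $e\le 12-4-1=7$. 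It is therefore $I_0^*$ or $I_1^*$, with $\mu=4$. The remaining fibre is either of type IV or has $\mu=2$; in either case $1/\mu\ge\tfrac13$. The reciprocal sum is at least $\tfrac14+\tfrac13+\tfrac13=\tfrac{11}{12}$. When the remaining fibre has $\mu=2$ the sum is $\tfrac14+\tfrac13+\tfrac12>1$, and when it is of type IV the starred fibre would need $e\le 12-8=4<6$, which is impossible.
\end{itemize}
Every case contradicts the strict inequality $\sum m\gamma_i<1$. Hence at most two fibres are modified.

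The only delicate points are two boundary cases. The configuration of three IV fibres gives exactly $1$, not more, so it is the strict inequality coming from $d>0$ that excludes it. The second is the justification that smooth multiple fibres cannot be modified; I would state it explicitly as an extension of the last clause of Lemma~\ref{multiplicity}. Beyond that, the argument is a finite check using only the $e_i$ and $\mu_i$ values of Table~\ref{tab:kodaira-euler}.
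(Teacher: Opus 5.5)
Your proof is correct and is essentially the paper's argument. Both proofs combine \eqref{first-correction} with $\sum_i m\gamma_i<1$ and the Euler budget $12$ through Table~\ref{tab:kodaira-euler}. The only difference in organisation is the case split: the paper splits by Euler number (first $e\ge8$, then $I_n^*$, then all $\mu\le3$), whereas you split by how many fibres have $\mu=2$ and by starredness.

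Your explicit check that a smooth multiple fibre cannot be modified usefully clarifies a point the paper leaves implicit in Lemma~\ref{multiplicity}. There is also a shorter route: such a fibre has $\mu=1$, so its first blow-up already gives $m\gamma_i\ge1$.
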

\begin{proof}
Suppose at least three fibres are modified. Their Euler numbers sum to at most $12$. If one has Euler number at least eight, the other two have total Euler number at most four; each of those two has $\mu\le2$, so their reciprocal contributions already sum to at least $1$.

Otherwise, if one is of type $I_n^*$, its Euler number is at least $6$ and its reciprocal contribution is at least $1/4$. The other two have total Euler number at most $6$. Either both have $\mu\le2$, or one is type IV and the other has $\mu\le2$. In the latter, smaller case the total contribution is at least $1/4+1/3+1/2=13/12>1$.

In all remaining cases every fibre has $\mu\le3$, so three reciprocal contributions sum to at least $1$. Each possibility contradicts \eqref{first-correction} and $\sum m\gamma_i<1$.
\end{proof}

\subsection{The lower bound for volume in this case}
\begin{proposition}\label{elliptic-degree}
In the rational elliptic branch,
\begin{equation}\label{HP}
H\cdot P_X\ge\frac{\eps^2}{36}.
\end{equation}
In particular, if $H$ is ample on $X$, then
\begin{equation}\label{elliptic-volume}
H^2\ge\frac{\eps^5}{648}.
\end{equation}
\end{proposition}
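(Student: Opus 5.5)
We pull everything back to the minimal resolution $S$ and use the explicit Zariski decomposition \eqref{elliptic-zariski}. Since $f^*P_X=\pi^*(dF)$ and $f^*H$ is nef, we have
\[
H\cdot P_X=f^*H\cdot dF=d\,(f^*H\cdot F),
\]
where $F\sim-mK_{S_0}$ is a general fibre of the Halphen fibration. A general fibre lies in the smooth locus of $X$ and is a moving curve there. Hence $H\cdot F_X=:h$ is a positive integer. It is positive because $H$ is ample and $F$ is not contracted, and integral because $H$ is integral and Cartier near $F$. So $H\cdot P_X=dh$, and the first task is to show that $d\,h\ge\eps^2/36$ in a way where the Halphen index $m$ cancels.

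\textbf{Step 1: integrality of $h$ modulo $m$.} The fibre class on $S_0$ is $F=m\bar F$ near the multiple fibre, where $\bar F=-K_{S_0}$ numerically. I would show that $h$ is divisible by $m$. Indeed $f^*H\cdot F=m\,(f^*H\cdot\bar F_{\mathrm{mult}})$, and the reduced multiple fibre is a smooth elliptic curve. By Lemma~\ref{multiplicity} it is not blown up, so it is untouched by $S\to S_0$ and lies in the smooth locus of $X$. Therefore $f^*H\cdot\bar F_{\mathrm{mult}}$ is a positive integer, which gives $h\ge m$ when $m>1$. When $m=1$ we simply have $h\ge1$. In all cases $h\ge m$.

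\textbf{Step 2: a lower bound for $md$.} We have
\[
md=1-\sum_i m\gamma_i,
\]
and by Lemma~\ref{two-fibres} at most two terms are nonzero. Each $\gamma_i=k_E/n_E$ for some exceptional $E$. By Lemma~\ref{multiplicity}, $n_E\le6M_i/\eps$. The multiple fibre itself is smooth and unmodified, so $M_i=1$ for every modified fibre. Hence $m\gamma_i$ is a rational number with denominator $n_E\le6/\eps$ after accounting for the factor $m$. More precisely, $m\gamma_i=mk_E/n_E$, and every such term lies in $\frac1{n_i}\Z$ with $n_i\le6/\eps$. Therefore
\[
md\in\frac{1}{n_1n_2}\Z_{>0},
\]
and so $md\ge1/(n_1n_2)\ge\eps^2/36$. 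Combining with Step 1 gives
\[
H\cdot P_X=dh\ge md\ge\eps^2/36.
\]
In the one-fibre case the bound is even $\eps/6$.

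\textbf{Step 3: the volume bound.} Theorem~\ref{main-thresholds} shows that $K_X+T(\eps)H$ is pseudo-effective. Lemma~\ref{positive-part}, in the form $\lambda H^2\ge H\cdot P$, then gives
\[
H^2\ge\frac{H\cdot P_X}{18\eps^{-3}}\ge\frac{\eps^5}{648}.
\]

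The main obstacle is Step 2 together with the cancellation in Step 1. Two points need care. First, the modified fibres are indeed non-multiple, which uses that the multiple fibre of a Halphen surface is of type $I_0$ or $I_n$ and is smooth when reduced, together with the last clause of Lemma~\ref{multiplicity}. I would have to check that a multiple $I_n$ fibre, with $n>0$, cannot be modified; if it can, the bound $n_E\le6m/\eps$ still makes $m\gamma_i$ have denominator dividing $n_E/m$-type quantities, and I would try to recover the same denominator bound $6/\eps$ for $m\gamma_i$ this way. Second, the integrality of $f^*H\cdot\bar F$ for the reduced multiple fibre requires that it avoids $\Sing X$.
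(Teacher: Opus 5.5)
Your architecture is the paper's: $H\cdot P_X=d\,(H\cdot F_X)$, then $H\cdot F_X\in m\Z_{>0}$, then $md\ge\eps^2/36$ from at most two modified fibres with denominators at most $6/\eps$, and finally the cubic threshold via Lemma~\ref{positive-part}. Your Step~3 is exactly the paper's volume deduction.

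The gap is the assertion, used in both Step~1 and Step~2, that the multiple fibre is a smooth elliptic curve. This is false in general. The unique multiple fibre of a Halphen surface of index $m$ is of type ${}_mI_n$, and $n\ge1$ occurs: take nine points on a nodal cubic whose sum is $m$-torsion in the smooth locus $\C^*$. Such a fibre can be modified. For example, blowing up a node of the primitive fibre $F_0$ has $k_E=1$ and $n_E=2m$, and it is allowed once $t_i\ge1/(2m)$. Its components can also be $f$-exceptional, for instance the $(-4)$-curve left after blowing up the node of a ${}_mI_1$ fibre. So the reduced multiple fibre need not avoid $\Sing X$, and $M_i=m$ is possible for a modified fibre. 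As written, Step~1's integrality argument and Step~2's claim $M_i=1$ are therefore unjustified. You flag both points yourself, but you leave them unresolved.

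Both repairs are short, and they are what the paper does.

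\emph{Step~1.} Bypass the multiple fibre altogether. Write $f^*H=\widetilde H+\sum c_EE$, with $\widetilde H$ the integral strict transform on $S$ and every $E$ vertical. Then $f^*H\cdot F=\widetilde H\cdot F=-m\,\widetilde H\cdot\pi^*K_{S_0}\in m\Z_{>0}$ by the Halphen relation $F\sim-mK_{S_0}$. This holds regardless of the fibre types or the position of the singular points.

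\emph{Step~2.} Your fallback is correct. Over the multiple fibre one has $n_E=m\bar n_E$, with $\bar n_E\le6/\eps$ by Lemma~\ref{multiplicity}. Hence $m\gamma_i=k_E/\bar n_E$ again has denominator at most $6/\eps$, and the bound $md\ge\eps^2/36$ follows as you wrote.

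With these two changes your proof coincides with the paper's.
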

\begin{proof}
We can choose a component attaining each nonzero maximum in \eqref{gamma}. Lemma~\ref{multiplicity} shows that $m\gamma_i$ is a rational number whose reduced denominator is at most $6/\eps$. For a non-multiple fibre its denominator divides $n_E\le6/\eps$. For the multiple fibre, $n_E=m\bar n_E$, so $m\gamma_i=k_E/\bar n_E$ with $\bar n_E\le6/\eps$.

By Lemma~\ref{two-fibres}, the positive number
\[
md=1-\sum_i m\gamma_i>0
\]
has denominator at most $(6/\eps)^2$. Thus we have $md\ge\eps^2/36$.

Let $\widetilde H$ be the integral strict transform on $S$. All $f$-exceptional curves are vertical. Using \eqref{halphen},
\[
H\cdot F_X=f^*H\cdot F=\widetilde H\cdot F
=-m\,\widetilde H\cdot\pi^*K_{S_0}\in m\Z_{>0}.
\]
The positivity follows from nefness and bigness of $H$ and that $F_X$ is a fibre. Therefore
\[
H\cdot P_X=d\,H\cdot F_X\ge md\ge\eps^2/36.
\]
For the volume estimate, Theorem~\ref{main-thresholds}
gives $K_X+18\eps^{-3}H$ pseudo-effective. Since $H$
is nef and $-K_X=P_X+N_X$ with $N_X\ge0$,
\[
 18\eps^{-3}H^2\ge H\cdot(-K_X)
                    \ge H\cdot P_X\ge\frac{\eps^2}{36}.
\]
This proves \eqref{elliptic-volume}.
\end{proof}

\section{Numerically trivial anticanonical models}\label{zero-sec}
We first treat $K_X\equiv0$ itself. This gives an absolute
volume bound without a restriction on the Picard number.
The case where only the anticanonical nef model has
numerically trivial canonical class requires a separate argument.

\begin{proposition}\label{zero-absolute}
Let $X$ be a projective klt surface with $K_X\equiv0$.
Every integral nef and big Weil divisor $H$ satisfies
\begin{equation}\label{CY-absolute}
                   H^2\ge\frac1{66\cdot840}=\frac1{55440}.
\end{equation}
\end{proposition}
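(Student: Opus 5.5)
The strategy is to bound the global index of $K_X$ and the local Cartier indices of integral Weil divisors, using the classification of klt surfaces with numerically trivial canonical class. First I would reduce to the case $K_X\sim0$ via the index-one cover. Let $r$ be the smallest positive integer with $rK_X\sim0$; for a klt surface with $K_X\equiv0$ such an $r$ exists by abundance. The cover $\pi:\widetilde X\to X$ is quasi-\'etale of degree $r$, and $\widetilde X$ is canonical with $K_{\widetilde X}\sim0$. For integral $H$, the pullback $\pi^{[*]}H$ is an integral Weil divisor, nef and big, with $(\pi^*H)^2=rH^2$. Hence a lower bound $\beta$ for volumes on canonical Calabi--Yau surfaces gives $H^2\ge\beta/r$.

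On the canonical cover, $\widetilde X$ has only ADE singularities, and its minimal resolution is a K3 or abelian surface (or Enriques/bielliptic, but those have $K\not\sim0$; the index-one cover removes them). For the abelian case $\widetilde X$ is smooth, so $\pi^*H$ is Cartier and its square is a positive even integer. For the K3 case the Weil divisor $\pi^*H$ has local class group orders bounded by the ADE determinants. Pulling back to the K3 resolution, we get $(\pi^*H)^2=\widetilde H^2+\sum(\text{local correction})$, where $\widetilde H^2$ is even and each correction lies in $\frac{1}{D_p}\Z$. So the denominator of the volume divides $\lcm$ of the local discriminant-group exponents.

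This is where the number $66$ should come from. Since $\sum(\text{Milnor numbers})\le 19$ (rank of the Picard lattice minus one, as $\rho\le20$), the lcm of the exponents of discriminant groups of ADE configurations of total rank at most $19$ is bounded. I expect the maximum relevant lcm to be $66$ (e.g.\ $A_1+A_2+A_{10}$-type data $2\cdot3\cdot11$, with total rank $13\le19$). Nonetheless I would need to check that no larger lcm (such as $A_{4}+A_{6}+A_{2}$ giving $5\cdot7\cdot3=105$, rank $12$) is realised with an integral class achieving that denominator. That check seems worrying, so the pairing with the parity and lattice-theoretic constraints (discriminant form embedding into the K3 lattice) would be needed. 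This combinatorial lattice step is the main obstacle, and I would try a Nikulin-type argument bounding the denominator of $\widetilde H^2$ jointly rather than point by point.

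The factor $840$ would bound the index $r$ times residual denominators. The index $r$ of a klt surface with $K\equiv0$ is known to satisfy $r\le 21$ or lie in a finite explicit list (Blache, Zhang), with lcm-type constant $840=\lcm(1,\dots,8)$. Rather than dividing by $r$ naively, I would bound $H^2$ directly on $X$: the local Cartier index of $H$ at a noncanonical point divides the local determinant, and these points are quotients of ADE by cyclic groups of order dividing $r$. The final bound would combine into $H^2\ge 1/(66\cdot840)$. Since only the integrality $qH^2\in\Z_{>0}$ is used, the argument then finishes as in Proposition~\ref{rank-one-index}, with the appeal to the finite arithmetic of Appendix~\ref{arithmetic-app} for the lcm computation.
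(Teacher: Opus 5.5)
Your skeleton matches the paper's proof: index-one cover, quasi-\'etale pullback with $\widehat H^2=rH^2$, K3 or abelian minimal resolution, ADE points of total rank at most $19$, and $q\widehat H$ Cartier for the lcm $q$ of the local class-group exponents. However, you attach the two constants to the wrong steps, and you prove neither, so there is a genuine gap.

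\textbf{The ADE lcm.} Your expected bound $66$ for the lcm is false, as you suspected. $A_2+A_4+A_6$ already gives $105$. $A_2+A_4+A_6+A_7$, of rank exactly $19$, gives $3\cdot5\cdot7\cdot8=840$. No Nikulin-type lattice refinement is needed. The correct bound is $q\le840$, proved in Lemma~\ref{ade-budget} by a rank budget. An $A_r$ point whose exponent $r+1$ contributes prime powers $p^a\Vert(r+1)$ costs at least $\sum(p^a-1)$ in rank, and the other ADE types are checked directly. Hence $\sum_{p^a\Vert q}(p^a-1)\le19$, and a finite enumeration gives the maximum $840$.

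\textbf{The index.} Conversely, $840$ is not an index bound. The statement ``$r\le21$ or a finite list with constant $\lcm(1,\dots,8)$'' is not a result you can cite. The missing idea is a cyclotomic argument:
\begin{itemize}
\item By minimality of $r$, a generator of the deck group acts on the holomorphic $2$-form of the cover by a primitive $r$-th root of unity.
\item It lifts to the minimal resolution, which is K3 or abelian, and acts on its $H^2(\cdot,\Q)$, of dimension $22$ or $6$.
\item Since this characteristic polynomial is rational, the cyclotomic polynomial $\Phi_r$ divides it. Hence $\varphi(r)\le22$, which forces $r\le66$.
\end{itemize}
With $r\le66$ and $q\le840$, you get $1\le(q\widehat H)\cdot\widehat H=qrH^2$, which is the claimed bound. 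Your third paragraph, working directly on $X$ with quotients of ADE points, is then unnecessary.
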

\begin{proof}
By surface log abundance \cite[Theorem 1.1]{Fujino},
$K_X\sim_\Q0$. Let $I$ be the least positive integer with
$IK_X\sim0$. The associated connected cyclic canonical cover
$\pi:\widehat X\to X$ is quasi-\'etale of degree $I$ and
$K_{\widehat X}\sim0$; see \cite[Section 5.2]{KM}.
Its singularities are Gorenstein and klt, hence canonical.
The minimal resolution $\widetilde X$ has trivial canonical
bundle, so it is a K3 or abelian surface
\cite[Chapter VI]{BHPV}.

The deck action lifts to $\widetilde X$ and acts on its
holomorphic two-form by a primitive $I$-th root of unity.
Otherwise a smaller positive multiple of $K_X$ would be
trivial. The corresponding cyclotomic polynomial divides
the characteristic polynomial on $H^2(\widetilde X,\Q)$.
Thus $\varphi(I)\le22$ in the K3 case, and $\varphi(I)\le6$
in the abelian case. In particular $I\le66$. Indeed,
\[
 \varphi(I)=\prod_{p^a\Vert I}p^{a-1}(p-1)
\]
restricts the possible primes to $2,3,5,7,11,13,17,19,23$;
the largest integer with totient at most $22$ is $66$.

The pullback $\widehat H=\pi^*H$ is integral because $\pi$
is quasi-\'etale, and $\widehat H^2=IH^2$.
In the K3 case, the exceptional ADE configurations have
total rank at most nineteen: their classes lie in the
orthogonal complement of the positive pullback of
$\widehat H$ in a N\'eron--Severi lattice of rank at most
twenty. An abelian surface contains no exceptional rational
curves. Let $q$ be the least common multiple of the exponents
of the local class groups of $\widehat X$.
Lemma~\ref{ade-budget} gives $q\le840$. Since $q\widehat H$
is Cartier,
\[
              1\le(q\widehat H)\cdot\widehat H=qIH^2,
\]
which proves the proposition.
\end{proof}

\begin{proposition}\label{zero-thm}
Let $(X,B)$ be a projective klt log Calabi--Yau $\Q$-pair
whose anticanonical nef model has numerically trivial
canonical class. Every integral nef and big Weil divisor
$H$ satisfies $H^2\ge c_0$.
\end{proposition}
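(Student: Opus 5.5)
Our plan is to reduce to a canonical-cover argument on the anticanonical model $Y$, and then transport the integral polarisation back. The loss from the transport is controlled by the local determinants of Proposition~\ref{mass}. That explains the shape of $c_0=(344\cdot66^3)^{-18\cdot66^3}$.

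\emph{Step 1: pass to the anticanonical model.} By Lemma~\ref{H-model} we may assume $H$ is ample. Apply Lemma~\ref{anti-model} to obtain $g:X\to Y$ with $K_Y\equiv0$, hence $B_Y=0$. The pair $(Y,0)$ is crepant to $(X,B)$, so $Y$ is klt. Since $X$ is obtained from $Y$ by extracting divisors of log discrepancy at most one over $Y$, those divisors have discrepancy in $[\eps,1]$ and lie over singular points of $Y$. Put $H_Y=g_*H$. This is integral and big, and it is nef because $\rho(Y)\le\rho(X)$ and pushforward of an ample divisor is nef on a surface. Proposition~\ref{zero-absolute} gives $H_Y^2\ge1/55440$. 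However, $g^*H_Y=H+\sum_j u_jE_j$ with $u_j\ge0$, so
\[
H^2=H_Y^2-\Bigl(\sum_j u_jE_j\Bigr)^2_{\text{corr}},
\]
and the loss $H_Y^2-H^2\ge0$ can a priori eat the whole gap. So we cannot conclude directly. Instead we use integrality on $X$ itself: it suffices to bound the Cartier index of $H$ on $X$, that is, $\lcm_pD_p$ over the singular points of $X$.

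\emph{Step 2: bound the singularities of $X$.} The index $I$ of $K_Y$ satisfies $I\le66$, as in the proof of Proposition~\ref{zero-absolute}. Take the index-one cover $\widehat Y\to Y$, which has canonical singularities. The minimal resolution $\widetilde Y$ is K3 or abelian, with $\rho(\widetilde Y)\le22$. The minimal resolution $S$ of $X$ is dominated by a resolution of $Y$. Its rank is controlled as follows.
\begin{itemize}
\item The extracted divisors over a point $y\in Y$ have log discrepancy $\le1$. Their number and determinants are bounded by lifting to the cover, where the points are ADE and crepant-or-worse extractions over a canonical point are finite in number once log discrepancies upstairs are at most $I$.
\item This gives $\rho(S)\le 18\cdot66^3$, up to the exact form of the constant. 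We aim for the explicit count $18\cdot66^3$ by applying the $\rho\le18\eps^{-3}$-type count \eqref{potential} with $\eps$ replaced by $1/66$, using that pulled-back discrepancies scale by at most $I$.
\item Each local determinant is at most $344\cdot66^3$, by Proposition~\ref{mass} applied at discrepancy level $1/66$ when $Y$ is rational. In the nonrational case the abelian/K3 quotient structure gives finitely many quotient types directly.
\end{itemize}

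\emph{Step 3: conclude.} There are at most $\rho(S)$ singular points, so $q=\lcm_pD_p\le\prod_pD_p\le(344\cdot66^3)^{18\cdot66^3}$. Since $qH$ is Cartier and $H$ is ample, $qH^2\in\Z_{>0}$, giving $H^2\ge c_0$. This bound is independent of $\eps$, which matches the statement, since it only assumes klt.

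The main obstacle is Step 2. We must show that the discrepancy bound on $X$ relevant here is uniform, namely $1/I\ge1/66$, rather than $\eps$. The reason is that every divisor with log discrepancy at most one over a klt point of index $I$ has log discrepancy in $\frac1I\Z$, so all relevant discrepancies are at least $1/66$. Proposition~\ref{mass} and \eqref{potential} can then be run with $\eps=1/66$, provided $Y$ is rational. The remaining irrational case must be handled through the quasi-\'etale cover by a K3 or abelian surface directly.
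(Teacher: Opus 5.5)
Your rational case is the paper's argument. Since $IK_Y\sim0$ with $I\le66$, every log discrepancy of the crepant pair $(X,B)$ lies in $\frac1I\Z_{>0}$, so $(X,B)$ is $1/66$-lc. Proposition~\ref{mass} and \eqref{potential} at $\eps=1/66$ then bound each $D_p$ by $344\cdot66^3$ and the number of singular points by $18\cdot66^3$. The justifications in your bullets (``lifting to the cover'', ``discrepancies scale by at most $I$'') are not needed; the $\frac1I\Z$ observation in your last paragraph is the whole reason. Also drop the reduction to the ample model. The hypothesis on the anticanonical nef model concerns the original $X$ and is not obviously inherited by $\bar X$. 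You only use crepancy to $(Y,0)$, and $qH\cdot H\in\Z_{>0}$ holds for any integral nef and big $H$. Note too that no $\eps$ appears in the hypothesis.

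The genuine gap is the non-rational case. You leave it as ``handled through the quasi-\'etale cover by a K3 or abelian surface directly'', and as stated this does not work. Proposition~\ref{mass} and \eqref{potential} are proved only for rational pairs, so you have no bound on $\rho(S)$ or on the number of singular points of $X$. Finiteness of quotient types on the cover of $Y$ also says nothing about $H$ on $X$: as you observed in Step 1, the volume cannot be transported from $Y$ back to $X$.

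The missing idea is that non-rationality forces $B_S=0$ on the minimal resolution $S$ of $X$. Components of $B_S$ have log discrepancy $<1$ over $(Y,0)$, so they are rational curves exceptional over $Y$. If $\kappa(S)=-\infty$, then $S$ is ruled over a curve of positive genus and every rational curve is vertical. A general ruling fibre $F$ then has $B_S\cdot F=0$ but $K_S\cdot F=-2$, contradicting $K_S+B_S\sim_\Q0$. If $\kappa(S)\ge0$, an effective divisor linearly equivalent to $mK_S$ plus the effective $mB_S$ is linearly trivial, so both vanish. Hence $B_S=0$, so $B=0$ and $X$ is canonical with $K_X\equiv0$. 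Proposition~\ref{zero-absolute} then applies directly to $(X,H)$ and gives $H^2\ge1/55440\ge c_0$.
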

\begin{proof}
Let $g:X\to Y$ be the anticanonical nef model. Its
pushed-forward boundary is effective and numerically
trivial, hence zero. Thus
\[
                         K_X+B=g^*K_Y.
\]
The canonical-cover argument in Proposition~\ref{zero-absolute}
gives an integer $I\le66$ with $IK_Y\sim0$.
Every log discrepancy over $Y$ is a positive integral
multiple of $1/I$. Consequently $(X,B)$ is $1/66$-lc,
independently of the original discrepancy bound.

If $X$ is rational, Proposition~\ref{mass} and
\eqref{potential}, applied with $\eps=1/66$, bound every
local determinant by $344\cdot66^3$ and the number of
singular points by $18\cdot66^3$. Their product clears
every local Weil class. Therefore
\begin{equation}\label{zero-volume}
             H^2\ge(344\cdot66^3)^{-18\cdot66^3}=c_0.
\end{equation}

Suppose now that $X$ is not rational. On the minimal
resolution $S$, the effective crepant boundary $B_S$ is
supported on rational curves exceptional over $Y$.
If $\kappa(S)=-\infty$, the classification of smooth
projective surfaces gives a ruled minimal model, whose
ruling pulls back to a morphism $S\to C$.
The base has positive genus, since otherwise $X$ would be
rational. Every rational curve on $S$ is then vertical.
For a general ruling fibre $F$ this gives $B_S\cdot F=0$
and $K_S\cdot F=-2$, contradicting $K_S+B_S\sim_\Q0$.
Hence $\kappa(S)\ge0$. A positive multiple of $K_S$ is
effective, and its sum with the corresponding multiple
of $B_S$ is linearly trivial. Intersecting with an ample
divisor forces both effective divisors to vanish.
Thus $B_S=0$, $K_S\equiv0$, and $X$ has canonical
singularities with $K_X\equiv0$.
Proposition~\ref{zero-absolute} gives
$H^2\ge1/55440\ge c_0$.
\end{proof}

\subsection{Proof of the main theorem}\label{completion-sec}
\begin{proof}[Proof of Theorem~\ref{main}]
Apply the ample-model reduction of Lemma~\ref{H-model}.
If the resulting boundary is big, use Section~\ref{big-sec}.
Otherwise run the anticanonical MMP. Its semiample
anticanonical divisor is either numerically trivial,
treated above, or defines the genus-one fibration of
Section~\ref{elliptic-sec}. In the latter case a nonrational
surface is smooth, while a rational surface satisfies
\eqref{elliptic-volume}.

Since $\eps^5\ge\eps^9/\ellog(\eps)$ and
$1\ge\eps^9/\ellog(\eps)$, the constants in
\eqref{more-constants} prove \eqref{main-cy}.
The inequality $\ellog(\eps)\le2/\eps$ gives the pure
tenth-order polynomial bound.

For an $\R$-boundary with $K_X+B\equiv0$, fix a log
resolution and its finite support. The numerical equations
for the coefficients have rational coefficients.
Approximate within their solution space by a rational
effective boundary $B'$ sufficiently close to retain
$\eps/2$-lc singularities. This can be checked on the fixed
log resolution, since further blow-ups of a simple normal
crossing pair cannot decrease the discrepancy below the
minimum of the positive discrepancies already present.
Surface log abundance gives $K_X+B'\sim_\Q0$.
Applying the rational-boundary theorem changes the constant
by at most a factor $2^{10}$ and leaves the order unchanged.
\end{proof}
\section{Polynomial effective birationality}\label{birational-sec}
We refine the surface alternative in \cite[Lemma 4.1]{Bie}.
The centre construction and cutting lemma are those of
\cite[Sections 2.15 and 3.1, Lemma 2.18]{Birkar}.
The quantitative input is Lemma~\ref{centre-degree}: the
degree of a general curve centre is bounded below linearly
in $\eps$. This yields the conversion stated in
Theorem~\ref{intro-conversion}. The argument uses ambient
$\eps$-lc singularities and the adjoint positivity
$H-K_X$; it does not use a log Calabi--Yau boundary or
the threshold theorem.

\subsection{Potential birationality and additions}
A big $\Q$-Cartier $\Q$-divisor $D$ on $X$ is said to be \emph{potentially birational} if, for two very general points $x,y\in X$, after possibly interchanging them, there exist a rational $0<\delta<1$ and an effective $\Q$-divisor
\[
\Delta\sim_\Q(1-\delta)D
\]
such that $(X,\Delta)$ is lc near $x$, the point $x$ is an isolated non-klt centre, and $(X,\Delta)$ is not klt at $y$. The standard implication is the following
\begin{equation}\label{pb-implication}
D\text{ potentially birational}\quad\Longrightarrow\quad |K_X+\lceil D\rceil|\text{ birational};
\end{equation}
see \cite[Lemma 2.3.4]{HMXBir} and \cite[Section 2.12]{Birkar}. As all applications below are integral divisors, no rounding error enters the bounds.

\begin{lemma}\label{pb-addition}
If $D$ is potentially birational and $E$ is a pseudo-effective $\Q$-Cartier $\Q$-divisor, then $D+E$ is potentially birational. More generally, an effective divisor $\Delta\sim_\Q A$ with the required point-centre properties can be extended to a witness for $D$ whenever $D-A$ is big.
\end{lemma}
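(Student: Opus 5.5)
The plan is to prove the more general second assertion first and then deduce the first from it. So assume $\Delta\sim_\Q A$ is effective, has the required point-centre properties at the very general pair $x,y$, and $D-A$ is big. Concretely, $\Delta\sim_\Q A$ is effective, $(X,\Delta)$ is lc near $x$ with $x$ an isolated non-klt centre, and $(X,\Delta)$ is not klt at $y$.

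Since $D-A$ is big, write $D-A\sim_\Q G$ with $G\ge0$. Bigness lets us choose $G$ so that its support avoids $x$ and $y$. To see this, write $D-A\sim_\Q A'+E'$ with $A'$ ample and $E'\ge0$. The points are very general, so they avoid the fixed divisor $\operatorname{Supp}E'$. Take a general member of a multiple of $A'$ through neither point. One could also shrink the coefficients of $\Delta$ slightly and compensate with part of $A'$ to create the room $\delta$, as described next.

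Now set $\Delta'=(1-\delta')\Delta+G_\delta$. Here $\delta'\ge0$ is small and $G_\delta\sim_\Q (D-A)+\delta'A-\delta''D$ is effective with support avoiding $x,y$. Such a $G_\delta$ exists for small rational $\delta',\delta''>0$, because $D-A$ is big and bigness is open.

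The original witness already has $\Delta\sim_\Q(1-\delta)A$ for some $\delta>0$ in the definition's normalisation. I would keep that factor and not shrink $\Delta$. I would simply add
\[
G\sim_\Q(1-\delta_1)D-(1-\delta)A,
\]
choosing $\delta_1\in(0,1)$ small enough that this class is still big. This works because $D-A$ is big and, for small $\delta_1$, the class differs from $D-A$ by $\delta A-\delta_1D$; in the additive application $A=D$ itself, and then the class is $(\delta-\delta_1)D$, which is big. Then $\Delta+G\sim_\Q(1-\delta_1)D$. Near $x$ and $y$ the pair agrees with $(X,\Delta)$, because $G$ avoids both points. Hence all the local singularity conditions are unchanged, and $\Delta+G$ is a witness for $D$.

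For the first assertion, let $D$ be potentially birational with witness $\Delta\sim_\Q(1-\delta)D$, and take $A=(1-\delta)D$. The class $D+E-A=\delta D+E$ is big, since $D$ is big and $E$ is pseudo-effective. Then apply the general statement to $D+E$; the class needed there is $(1-\delta_1)(D+E)-(1-\delta)D$, which is big for small $\delta_1$ by openness.

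The main obstacle is keeping the support of the added effective divisor away from the two points. In the definition the points are very general, so they can be chosen after fixing a countable family of data; the witness, however, depends on the points. The fix is to take $G$ as a general member of a linear system: write the big class as ample plus effective with a fixed effective part, and note that very general points avoid that fixed part and the base locus of a sufficiently divisible ample multiple.
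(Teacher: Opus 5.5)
Your final argument is correct and is essentially the paper's: keep the witness $\Delta$ and add an effective $\Q$-representative of the residual class. That class is $(1-\delta_1)(D+E)-(1-\delta)D$, respectively $(1-\delta_1)D-A$, which is big for small $\delta_1$ by openness of bigness, and you choose the representative with support missing $x,y$, since very general points avoid the countably many relevant base loci. Drop the intermediate idea of replacing $\Delta$ by $(1-\delta')\Delta$, since shrinking the witness can destroy the non-klt conditions at $x$ and $y$; you rightly abandon it anyway.
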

\begin{proof}
For the first assertion, let $\Delta\sim_\Q(1-\delta)D$ be a witness. Choose a smaller rational $\delta'>0$. The divisor
\[
(1-\delta')(D+E)-(1-\delta)D
=(\delta-\delta')D+(1-\delta')E
\]
is big. Choose an effective $\Q$-representative avoiding $x$ and $y$. Adding it to $\Delta$ preserves the local centre properties. Thus $D+E$ is potentially birational. For the second assertion, bigness is an open condition, so $(1-\delta')D-A$ is big for sufficiently small positive $\delta'$, and the same argument applies. The choice of very general $x,y$ ensures that they avoid the relevant stable base loci.
\end{proof}

\subsection{An intersection gap for moving curves}
\begin{lemma}\label{moving-gap-bir}
Let $G$ be a general integral member of an irreducible covering family of curves on a normal projective surface $X$. Suppose $(X,G)$ is plt near $G$ and $(G,\Diff_G(0))$ is $\eta$-lc. Then
\[
G^2=0\quad\text{or}\quad G^2\ge\eta.
\]
If $G^2=0$, a general member avoids $\Sing X$.
\end{lemma}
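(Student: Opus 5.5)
The plan is to read off the statement from adjunction along $G$, using that the family is covering and $G$ is general, so $G$ is nef and $G^2\ge0$. Since $(X,G)$ is plt near $G$, the curve $G$ is normal, hence a smooth projective curve. Adjunction for plt surface pairs \cite[Theorem 5.22]{KM} then gives
\[
 (K_X+G)|_G\sim_\Q K_G+\Diff_G(0),\qquad \Diff_G(0)=\sum_{P}\Bigl(1-\frac1{m_P}\Bigr)P .
\]
Here the sum runs over the points $P\in G\cap\Sing X$, and $m_P\ge2$ is the index of the cyclic singularity through which $G$ passes. The $\eta$-lc hypothesis on $(G,\Diff_G(0))$ says $1/m_P\ge\eta$ for every such $P$. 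The key local input is the standard formula for a plt pair $(X,G)$ at a cyclic point of type $\frac1m(1,a)$ with $G$ the image of a coordinate axis. In that situation $G$ is $\Q$-Cartier, and its local Cartier index at $P$ is exactly $m_P$. Moreover, the local intersection contribution of $G$ with any Weil divisor through $P$ lies in $\frac1{m_P}\Z$.

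Step one computes $G^2$ by restricting $G$ to itself. I would express $\mathcal O_X(G)|_G$ via the different-type formula for Weil divisors on plt pairs: for an integral Weil divisor $D$ not containing $G$, one has $D\cdot G\in\Z+\sum_P\frac{1}{m_P}\Z$. Applying this to a second general member $G'$ of the family, which is algebraically equivalent to $G$ and meets $G$ properly, gives
\[
 G^2=G\cdot G'=\sum_{Q\in G\cap G'\cap X_{\rm reg}}(G\cdot G')_Q+\sum_{P\in G\cap G'\cap\Sing X}(G\cdot G')_P .
\]
Each regular-point term is a positive integer. For each singular point $P$ through which both $G$ and $G'$ pass, I would check on the index-one cover (or the toric chart) that the local intersection number is a positive element of $\frac1{m_P}\Z$, hence at least $1/m_P\ge\eta$.

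Step two draws the conclusion. If $G^2>0$, then $G\cap G'\neq\emptyset$ and some local term is positive. Any regular point gives a contribution $\ge1\ge\eta$, and any singular point gives a contribution $\ge1/m_P\ge\eta$, so $G^2\ge\eta$. If $G^2=0$, then general members are disjoint. The family covers $X$ and has only finitely many singular points to pass through. If a general member passed through a fixed singular point $P$, then two general members would both pass through $P$ and meet there with positive local intersection, contradicting $G^2=0$. Thus a general member avoids $\Sing X$.

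The main obstacle I expect is justifying the local positivity $(G\cdot G')_P\ge 1/m_P$ at a singular point. The plt hypothesis is only given near $G$, and $G'$ need not be in the same local analytic position as $G$. I would handle this by noting that, for general members, $(X,G')$ is also plt near $P$, so both curves are smooth branches through a cyclic point of order $m_P$. Pulling back to the local index-one cover $\C^2\to X$ of degree $m_P$ (quasi-étale near $P$), the preimages meet with a positive integer intersection number there, and the local number on $X$ is that integer divided by $m_P$, hence at least $1/m_P\ge\eta$.
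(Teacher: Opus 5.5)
Your proof is correct and follows the paper's argument. Both use nefness from the covering family and intersect two distinct general members. Both use the plt classification to get a cyclic chart with different coefficient $1-1/m_P$, hence $m_P\le\eta^{-1}$, then bound each local intersection below by $1/m_P$, and use disjointness to force avoidance of $\Sing X$ when $G^2=0$.

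The only difference is at the local step. The paper uses that $m_PG'$ is Cartier near $P$ and restricts it to the normal curve $G$, while you pull back to the degree-$m_P$ quasi-\'etale cover by $\C^2$. Both give $(G\cdot G')_P\in\frac1{m_P}\Z_{>0}$, and neither needs $(X,G')$ to be plt.
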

\begin{proof}
Since the covering family is irreducible, after restricting the parameter space to a dense open subset, any two general members occur as fibres of the same algebraic family over a connected curve. Hence they are algebraically equivalent and thus have the same numerical class as well. We claim that this class is nef: indeed, for any fixed curve, choose a distinct member and use the non-negativity of intersections of distinct effective curves. Take two distinct general members $G,G'$. At an intersection point $p$, the classification of plt surface pairs identifies a cyclic quotient chart with local class-group order $r_p$ and different coefficient $1-1/r_p$; see \cite[Chapter 4]{KM}. The coefficient bound gives $r_p\le\eta^{-1}$. The divisor $r_pG'$ is Cartier near $p$, and its restriction to the normal curve $G$ has positive integral multiplicity. Therefore
\[
(G\cdot G')_p\ge\frac1{r_p}\ge\eta.
\]
If the members meet, adding the local intersections proves the gap. If they do not meet, their common intersection number is zero. In the latter case, a singular point lying on general members would belong to two of them, a contradiction. Since $\Sing X$ is finite, a general member avoids it.
\end{proof}

\subsection{Adjunction on a curve centre}
We explain the part of covering-family adjunction that removes the factorial dependence. Suppose $X$ is $\eps$-lc, $N$ is integral, nef and big, and $n$ is the least positive integer with
\begin{equation}\label{centre-volume}
n^2N^2>16.
\end{equation}
Birkar's general construction in \cite[Section 2.15(2)]{Birkar} supplies a bounded family of centres and, for very general $x,y$, an effective divisor
\[
\Delta_0\sim_\Q nN
\]
such that $(X,\Delta_0)$ is lc near $x$, has a unique non-klt place whose centre contains $x$, and is not klt at $y$. Write $G$ for this centre. It has dimension zero or one. The constant $16=(2d)^d$ is exactly the volume threshold in that construction for $d=2$; it provides the required multiplicity positivity. No sharpness is asserted for this dimensional constant.

If $G$ is a point, it equals $\{x\}$ and already has the required isolated-centre property. Such centres require only the extension in Lemma~\ref{pb-addition}; no adjunction or cutting is needed. It therefore suffices to treat a curve centre. The bounded family has finitely many irreducible components; discard those not covering $X$ and replace the others by dense open subsets. Thus $G$ can be taken general in an irreducible covering family. Its coefficient in $\Delta_0$ is one.

Let $F\to G$ be the normalisation. Covering-family adjunction gives
\begin{align}
K_F+\Theta_F+P_F&\sim_\R(K_X+\Delta_0)|_F,\label{bir-adjunction}\\
K_F+\Lambda_F&=K_X|_F,\qquad \Lambda_F\le\Theta_F,
\qquad (F,\Lambda_F)\text{ sub-}\eps\text{-lc},\label{lambda-adjunction}
\end{align}
where $P_F$ is pseudo-effective. The construction and positivity are \cite[Section 3.1]{Birkar}, originating in \cite[Theorem 4.2]{HMXACC} and \cite[Construction 3.9 and Theorem 3.10]{BirkarAnti}; the comparison with the ambient discrepancy bound is \cite[Lemma 3.12]{BirkarAnti}. Since $F$ is a smooth curve, sub-$\eps$-lc simply means that every coefficient of $\Lambda_F$ is at most $1-\eps$, although some coefficients may be negative.

\begin{lemma}\label{centre-degree}
Every general curve centre above satisfies
\begin{equation}\label{curve-degree-bound}
nN\cdot G\ge\eps/2.
\end{equation}
\end{lemma}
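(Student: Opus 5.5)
The plan is to compute $\deg$ of both sides of \eqref{bir-adjunction} on $F$, and to compare with \eqref{lambda-adjunction}. Since $\Delta_0\sim_\Q nN$, the right side of \eqref{bir-adjunction} has degree $K_X\cdot G+nN\cdot G$. Subtracting \eqref{lambda-adjunction} gives
\[
nN\cdot G=\deg(\Theta_F-\Lambda_F)+\deg P_F\ge\deg(\Theta_F-\Lambda_F),
\]
because $P_F$ is pseudo-effective on a curve. Since $\Lambda_F\le\Theta_F$, it then suffices to produce a single point $z\in F$ where $\Theta_F-\Lambda_F$ has coefficient at least $\eps/2$. Alternatively, one could show that $nN\cdot G$ is positive and bounded below in another way.

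I would split into two cases.

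\emph{Case 1: $G^2>0$.} The point $y$ lies on $G$ or on the non-klt locus, and general members of the covering family pass through points of $G$. I would take a second general member $G'$ through a point of $G$, or use that two general members meet. The centre condition makes $\Delta_0$ have coefficient one along $G$ and pass through $y$ with non-klt singularities. Pushing this into $\Theta_F$ should give a point where $\Theta_F$ has coefficient $1$, while $\Lambda_F$ has coefficient at most $1-\eps$ there by sub-$\eps$-lc. The gap would then be at least $\eps\ge\eps/2$.

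Concretely, I would argue as follows. Since $G$ is the unique non-klt centre through $x$ and $(X,\Delta_0)$ is not klt at $y$, with $x,y$ very general, the standard tie-breaking setup puts $y$ on $G$ as well, or places a further non-klt centre meeting $G$. In either case the boundary $\Theta_F$ acquires a point of coefficient one. This is the step I expect to be the main obstacle. Birkar's construction only guarantees that $(X,\Delta_0)$ is lc near $x$, so making the coefficient $1$ appear in $\Theta_F$ requires care with how $y$ relates to $G$.

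\emph{Fallback if the coefficient-one point is unavailable.} I would instead use nefness of $N$ and integrality. On the $\eps$-lc surface, local Cartier indices along $G$ are bounded as in Lemma~\ref{moving-gap-bir}, and $N$ is integral and big. Hence either $N\cdot G>0$ with a denominator controlled by the different, or $N\cdot G=0$. The latter is impossible: the family is covering and $N$ is big and nef, so $N\cdot G>0$.

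For the denominator, I would apply Lemma~\ref{moving-gap-bir} with $\eta$ derived from $\Lambda_F\le\Diff$ and $\eps$-lc. The local index $r_p$ at a point of $G$ satisfies $1-1/r_p\le1-\eps$, roughly, so $r_p\le1/\eps$. Then $(N\cdot G)_p\ge1/r_p\ge\eps$ at any point where $G$ meets the support of a representative of $N$ through a singular point. If $G$ lies entirely in the smooth locus, then $N\cdot G\in\Z_{>0}$. Either way $nN\cdot G\ge N\cdot G\ge\eps/2$, which proves \eqref{curve-degree-bound}. The factor $1/2$ absorbs the case where the plt chart is not cyclic of the simplest kind.
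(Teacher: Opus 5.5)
Your first step, $s_0:=nN\cdot G=\deg(\Theta_F-\Lambda_F)+\deg P_F$, matches the paper's, but there is a genuine gap: neither of your continuations works. In Case~1 nothing produces a coefficient-one point of $\Theta_F$. The non-klt behaviour at $y$, and all of $\Delta_0-G$, is absorbed into $P_F$. By contrast, $\Theta_F$ only records the singularities of $X$ along $G$, through the dlt model with boundary $S+\sum E$. For a smooth $G$ in the smooth locus, $\Theta_F=0$ whatever happens at $y$.

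Your fallback assumes what has to be proved. Lemma~\ref{moving-gap-bir} needs $(X,G)$ to be plt near $G$, and this is not automatic for a general member of a covering family. The bound $r_p\le1/\eps$ also does not follow from ambient $\eps$-lc singularities. The inequality $\Lambda_F\le\Theta_F$ bounds $\Lambda_F$ by the different, not the different by $1-\eps$. Indeed, an $A_{r-1}$ point is canonical, yet a toric axis through it is plt with different coefficient $1-1/r$, with $r$ arbitrary. Finally, Lemma~\ref{moving-gap-bir} controls $G\cdot G'$ for two members of the family, not local contributions to $N\cdot G$. Moreover, $N$ need not have an effective representative through the singular point at all.

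The missing idea is to argue by contradiction, so that the smallness of $s_0$ itself supplies both plt-ness and the index bound. Suppose $s_0<\eps/2$. Since $\Theta_F-\Lambda_F\ge0$ has degree $<\eps/2$ and $\Lambda_F$ has coefficients at most $1-\eps$, every coefficient of $\Theta_F$ is $<1-\eps/2$. In the dlt model, no exceptional curve can then meet the strict transform of $G$, since that would create a coefficient-one point. Hence $(X,G)$ is plt near $G$, and $\Theta_F=\Diff_G(0)$ has coefficients below $1-\eps/2$. Next, the moving class $G$ is nef and $\Delta_0\ge G$, so $0\le G^2\le\Delta_0\cdot G=s_0<\eps/2$. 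Lemma~\ref{moving-gap-bir} with $\eta=\eps/2$ then forces $G^2=0$ and $G\cap\Sing X=\varnothing$. Only at this point does integrality apply: $N\cdot G\in\Z_{>0}$, so $s_0\ge n\ge1$, a contradiction. Your proposal lacks the inequality $G^2\le\Delta_0\cdot G$, which is what turns plt-ness into $G$ avoiding the singular locus. Note also that you aimed at $N\cdot G\ge\eps/2$, which is stronger than the statement and is not what the argument yields.
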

\begin{proof}
Subtract \eqref{lambda-adjunction} from \eqref{bir-adjunction} and take degrees:
\begin{equation}\label{degree-cancellation}
s_0:=nN\cdot G=\deg(\Theta_F-\Lambda_F)+\deg P_F.
\end{equation}
Both terms on the right are non-negative. Suppose $s_0<\eps/2$. Then for every point $p\in F$,
\[
\operatorname{coeff}_p\Theta_F
\le\operatorname{coeff}_p\Lambda_F+\deg(\Theta_F-\Lambda_F)
<1-\eps/2.
\]
Thus $(F,\Theta_F)$ is klt, with discrepancy gap at least $\eps/2$.

We now identify $\Theta_F$ with the ordinary different before applying Lemma~\ref{moving-gap-bir}. In the construction of \cite[Section 3.1]{Birkar}, let $\pi:W\to X$ be the resulting dlt model and let $S$ be the strict transform of $G$. Since $G$ is a divisor on the surface, $S$ is not exceptional. The map $h:S\to F$ is a birational contraction of normal projective curves, hence actually an isomorphism. With ambient boundary zero, the auxiliary adjunction boundary is
\[
\Sigma_W=S+\sum_{E\text{ prime exceptional for }\pi}E,
\qquad (K_W+\Sigma_W)|_S=K_S+\Sigma_S.
\]
The pair $(W,\Sigma_W)$ is dlt near $S$: its boundary is bounded above by the dlt boundary in the construction. Since $h$ is an isomorphism, the discriminant definition of $\Theta_F$ identifies it with $\Sigma_S$. If an exceptional component met $S$, adjunction for a dlt surface pair would give a coefficient one point in $\Sigma_S$. This contradicts the klt property of $\Theta_F$. Hence
\[
S\cap\Exc(\pi)=\varnothing.
\]
Connectedness of the fibres of the proper birational morphism $\pi$ now shows that $\pi$ is an isomorphism over a neighbourhood of $G$. Indeed, an exceptional fibre over a point of $G$ would be connected and would contain the point on $S$, so it would meet $S$. In particular, $G$ is normal,  $(X,G)$ is plt near $G$, and we have
\begin{equation}\label{ordinary-different}
\Theta_F=\Diff_G(0).
\end{equation}
Here the direct argument uses the fact that $S$ and $F$ are curves. It requires only $\deg P_F\ge0$, so neither bigness of $P_F$ nor the denominator conclusion in \cite[Proposition 3.7]{Birkar} is required.

The moving class $G$ is nef, and $\Delta_0-G\ge0$. Therefore
\[
0\le G^2\le\Delta_0\cdot G=s_0<\eps/2.
\]
Lemma~\ref{moving-gap-bir} forces $G^2=0$ and shows that general $G$ avoids $\Sing X$. Integrality gives $N\cdot G\in\Z$, and nefness and bigness of $N$ give $N\cdot G>0$ by the Hodge index theorem. Hence $s_0=nN\cdot G\ge n\ge1$, contradicting $s_0<\eps/2$. This proves \eqref{curve-degree-bound}.
\end{proof}

In the surface case, the canonical divisor cancels in \eqref{degree-cancellation} before any extra divisor is added. This is the reason that $N+K_X$ does not enter our argument here.

\subsection{The polynomial alternative argument} The following Proposition~\ref{bir-alternative} holds under the more general assumptions that $X$ itself is $\eps$-lc and $N-K_X$ is big. No log Calabi--Yau assumption is required here.

\begin{proposition}\label{bir-alternative}
Let $X$ be an $\eps$-lc surface, and let $N$ be an integral nef and big Weil divisor with $N-K_X$ big. Define $n$ by \eqref{centre-volume}, and put
\begin{equation}\label{bir-U}
A=\lceil16/\eps\rceil,\qquad q=3An+1,
\qquad U=3A+1\le52\eps^{-1}.
\end{equation}
Both $qN$ and $qN-K_X$ are potentially birational. If $b$ is the first positive integer for which $|bN|$ is birational, then
\begin{equation}\label{polynomial-alternative}
b\le U\quad\text{or}\quad b^2N^2\le64U^2\le173056\eps^{-2}.
\end{equation}
\end{proposition}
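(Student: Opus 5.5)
Start from Birkar's centre construction for $N$, with $n$ as in \eqref{centre-volume}. For very general $x,y$ it gives $\Delta_0\sim_\Q nN$ that is lc near $x$, has a unique non-klt place whose centre $G$ contains $x$, and is not klt at $y$. If $G=\{x\}$, then $nN$ already carries a witness. Since $qN-nN$ and $qN-K_X-nN=(q-n-1)N+(N-K_X)$ are big, Lemma~\ref{pb-addition} makes both $qN$ and $qN-K_X$ potentially birational.

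If $G$ is a curve, use the cutting lemma \cite[Lemma 2.18]{Birkar} on the normalisation $F$ of $G$. Lemma~\ref{centre-degree} gives $\deg(nN|_G)\ge\eps/2$, so a multiple $AnN$ with $A=\lceil16/\eps\rceil$ has degree at least $8$ on $F$. By \eqref{bir-adjunction}, the adjoint data $K_F+\Theta_F+P_F$ has degree bounded by $s_0=nN\cdot G$ plus $\deg K_F$. A divisor of degree $>2$ on the curve, after subtracting the different, lets one pick a divisor on $F$ with an isolated lc centre at $x$. Lifting it by inversion of adjunction needs an extra copy of the same class. Hence, with a multiplier of order $3A$, one obtains $\Delta\sim_\Q(3A n)N$, up to $(1-\delta)$, with $x$ an isolated non-klt centre and $y$ non-klt. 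The ``$+1$'' in $q=3An+1$ absorbs the $(1-\delta)$ scaling, and the $N-K_X$ term is handled as before via Lemma~\ref{pb-addition}.

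For the alternative \eqref{polynomial-alternative}, argue by contradiction. Suppose $b>U$ and $b^2N^2>64U^2$. Apply the above with $N'=\lfloor b/(2U)\rfloor N$, or a similar rescaling, whose volume already exceeds $16$, so that $n=1$ for $N'$. Then $qN'$ with $q=3A+1=U$ gives potential birationality of $UN'$, a multiple of $N$ of size at most $b/2+O(1)$. By \eqref{pb-implication}, $|K_X+UN'|$ is birational. Since $N-K_X$ is big, passing from $K_X+UN'$ to a multiple of $N$ costs one more copy of $N$ (via Lemma~\ref{pb-addition}, converting to $|(U\lfloor b/2U\rfloor+1)N|$ birational). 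This contradicts the minimality of $b$. The constant $64U^2$ comes from requiring $\lfloor b/(2U)\rfloor^2N^2>16$, which follows from $b^2N^2>64U^2$ after the floor loss, using $b>U$ to control it. Finally $64\cdot52^2\eps^{-2}=173056\eps^{-2}$.

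The main obstacle is the cutting step: producing the isolated centre on the curve $G$ with an explicit linear multiplier $3A$ rather than a factorial one. Lemma~\ref{centre-degree} supplies the degree gap, but one must check that the adjunction error $\Theta_F$ plus the lifting cost fits within a factor of $3$. The ordinary/adjoint birationality conversion in the last step also needs care with rounding.
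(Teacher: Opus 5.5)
The potential-birationality half is essentially the paper's argument: the centre construction, Lemma~\ref{centre-degree} giving $(AnN)\cdot G\ge A\eps/2\ge8$, the cutting lemma, and Lemma~\ref{pb-addition} with the big residuals $N$ and $N-K_X$. The ``obstacle'' you flag is not really one. No further analysis of $\Theta_F$ is needed, because \cite[Lemma 2.18]{Birkar} only asks that the restricted volume of the auxiliary divisor $AnN$ on $G$ exceed $d^d=4$. It then returns, after possibly swapping $x,y$, a divisor in class $(n+2An)N$ with an isolated point centre at $x$. Adding a general member of $(A-1)nN$ gives exactly $3AnN$. Your heuristic about ``degree $>2$'' and one extra copy for inversion of adjunction should be replaced by this citation.

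The genuine gap is in your derivation of \eqref{polynomial-alternative}. You need $\lfloor b/(2U)\rfloor^2N^2>16$, but the floor loss goes in the wrong direction. From $b^2N^2>64U^2$ you only get $(b/(2U))^2N^2>16$, while $\lfloor b/(2U)\rfloor\le b/(2U)$, and the hypothesis $b>U$ does not repair this. If $U<b<2U$, then $N'=0$. If $N^2=5$ and $3.58\,U<b<4U$, then $N'=N$ has $N'^2=5<16$, so $n(N')=2$ and you may not take $q=U$ for $N'$. Using $\lfloor t\rfloor\ge t/2$ for $t\ge1$, your route would only give a weaker constant such as $256U^2$, after treating $b<2U$ separately.

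In fact no rescaling or contradiction is needed. Since $qN-K_X$ is potentially birational, \eqref{pb-implication} gives $|K_X+(qN-K_X)|=|qN|$ birational directly. Your detour through $|K_X+UN'|$ plus one more copy of $N$ is therefore unnecessary, and you get $b\le q=3An+1\le Un$. If $n=1$, this is $b\le U$. If $n\ge2$, minimality in \eqref{centre-volume} gives $(n-1)^2N^2\le16$, and $n\le2(n-1)$ yields $n^2N^2\le64$. Hence $b^2N^2\le U^2n^2N^2\le64U^2$. Equivalently, your rescaling idea works if the multiplier is $n$ itself rather than $\lfloor b/(2U)\rfloor$; that is exactly the paper's argument.
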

\begin{proof}
Apply the previous centre construction. For a curve centre, Lemma~\ref{centre-degree} gives
\[
(AnN)\cdot G\ge A\eps/2\ge8>4.
\]
Here $4=d^d$ is the restricted-volume threshold in \cite[Lemma 2.18]{Birkar} for $d=2$. On a curve, the restricted volume of a positive nef divisor is its degree. 

We can apply the cutting lemma with the auxiliary nef and big divisor $AnN$. After possibly interchanging $x$ and $y$, it gives an effective divisor in class $(n+2An)N$ with a unique point centre through $x$ and non-klt behaviour at $y$. Add a general effective representative of $(A-1)nN$ avoiding the two points. The resulting divisor satisfies
\[
\Delta_1\sim_\Q3AnN
\]
and has the same point-centre properties. If the initial centre was a point, obtain this same class by adding an effective representative of $(3A-1)nN$ to $\Delta_0$.

For the two divisors, the respective residual classes are
\[
qN-3AnN=N,\qquad (qN-K_X)-3AnN=N-K_X.
\]
Both are big. Lemma~\ref{pb-addition} supplies the strict margin in potential birationality for both targets. By \eqref{pb-implication}, $|qN|$ is birational, so $b\le q\le Un$. If $n=1$, this gives the first alternative. Otherwise minimality of $n$ gives $(n-1)^2N^2\le16$, whence $n^2N^2\le 4\cdot(n-1)^2N^2\leq 64$ and the second alternative follows.
\end{proof}

\begin{theorem}[Volume-to-birationality conversion]\label{volume-to-bir}
Let $X$ be an $\eps$-lc surface and $H$ an integral nef and big Weil divisor. If $H-K_X$ is big, there is an integer $M$ satisfying
\begin{equation}\label{M-precise}
M\le52\eps^{-1}\max\{1,8/\sqrt{H^2}\}
\end{equation}
such that both $|mH+L|$ and $|K_X+mH+L|$ are birational for every integer $m\ge M$ and every integral pseudo-effective Weil divisor $L$. If $H-K_X$ is only pseudo-effective, the same assertion holds with
\begin{equation}\label{M-peff}
M\le52\eps^{-1}\max\{2,8/\sqrt{H^2}\}.
\end{equation}
\end{theorem}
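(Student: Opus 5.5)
Apply Proposition~\ref{bir-alternative} with $N=H$ and read off $M$ from the explicit integer $q=3An+1$ it produces. Since $n$ is the least positive integer with $n^2H^2>16$, we have $n=1$ when $H^2>16$. Otherwise minimality gives $(n-1)^2H^2\le16$, hence $n\le 1+4/\sqrt{H^2}\le 8/\sqrt{H^2}$, because $4/\sqrt{H^2}\ge1$ in that range. Thus $n\le\max\{1,8/\sqrt{H^2}\}$. Put $M=q=3An+1\le(3A+1)n=Un\le52\eps^{-1}\max\{1,8/\sqrt{H^2}\}$, which is \eqref{M-precise}.

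It remains to pass from $m=q$ to every $m\ge M$ and every integral pseudo-effective $L$. Proposition~\ref{bir-alternative} gives that $qH$ and $qH-K_X$ are potentially birational. For $m\ge q$, write
\[
mH+L=qH+\bigl((m-q)H+L\bigr),\qquad mH+L-K_X=(qH-K_X)+\bigl((m-q)H+L\bigr).
\]
The added class is pseudo-effective, so Lemma~\ref{pb-addition} keeps both divisors potentially birational. Applying \eqref{pb-implication} to the integral divisor $D=mH+L-K_X$ makes $|K_X+D|=|mH+L|$ birational. Applying it to $D=mH+L$ makes $|K_X+mH+L|$ birational. No rounding occurs, since all divisors are integral Weil divisors and $\lceil D\rceil=D$. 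To avoid $\Q$-Cartier issues, note that $X$ is klt, hence $\Q$-factorial.

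If $H-K_X$ is only pseudo-effective, replace $N=H$ by $N=2H$. Then $2H-K_X=H+(H-K_X)$ is big, because $H$ is big and $H-K_X$ is pseudo-effective. Also $(2H)^2=4H^2$, so the new $n'$ satisfies $n'\le\max\{1,4/\sqrt{H^2}\}$. The proposition then gives potential birationality of $q'(2H)$ and $q'(2H)-K_X$. With $M=2q'\le 2Un'\le52\eps^{-1}\max\{2,8/\sqrt{H^2}\}$, this is \eqref{M-peff}.

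One adjustment is needed here. For $m\ge M$ not divisible by two, write $mH+L=2q'H+\bigl((m-2q')H+L\bigr)$. The remainder is still pseudo-effective, so Lemma~\ref{pb-addition} applies without any parity condition.

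The main point to check is the elementary bound $n\le\max\{1,8/\sqrt{H^2}\}$ with these exact constants, and that $3An+1\le(3A+1)n$. The latter holds since $n\ge1$. Everything else is a direct combination of Proposition~\ref{bir-alternative}, Lemma~\ref{pb-addition} and \eqref{pb-implication}.
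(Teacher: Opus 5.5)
Your proposal is correct and is essentially the paper's proof. You apply Proposition~\ref{bir-alternative} with $N=H$ (resp.\ $N=2H$), take $M=q$ (resp.\ $M=2q$), bound $n$ by minimality, and pass to all $m\ge M$ and all pseudo-effective $L$ via Lemma~\ref{pb-addition} and \eqref{pb-implication}; your explicit checks $3An+1\le(3A+1)n$ and $n\le 1+4/\sqrt{H^2}\le 8/\sqrt{H^2}$ just spell out steps the paper leaves implicit.
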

\begin{proof}
In the big case take $N=H$ and $M=q$ from Proposition~\ref{bir-alternative}. For every $m\ge q$, the divisor $(m-q)H+L$ is pseudo-effective. Apply Lemma~\ref{pb-addition} to both potentially birational divisors $qH$ and $qH-K_X$, and then use \eqref{pb-implication}. The target divisors are integral, which gives precisely the two stated systems.

If $n=1$, then $q\le U$. If $n\ge2$, minimality gives $(n-1)\sqrt{H^2}\le4$, and therefore $n\le8/\sqrt{H^2}$. This proves \eqref{M-precise}.

In the pseudo-effective case use $N=2H$, since $2H-K_X=H+(H-K_X)$ is big, and put $M=2q$. Both $2qH$ and $2qH-K_X$ are potentially birational, so the same addition argument applies to every integer $m\ge2q$, including odd integers. Since $N^2=4H^2$, multiplying the bound for $q$ by two gives \eqref{M-peff}.
\end{proof}

\begin{proof}[Proof of Corollary~\ref{main-birational}]
For a log Calabi--Yau pair, $H-K_X\sim_\Q H+B$ is big. Insert Theorem~\ref{main} into \eqref{M-precise}. Since $c\le1$ and $0<\eps\le1$, one may take
\begin{equation}\label{refined-M}
M_\eps=\left\lceil416c^{-1/2}\eps^{-11/2}\sqrt{\ellog(\eps)}\right\rceil.
\end{equation}
The inequality $\ellog(\eps)\le2/\eps$ gives the pure sixth-order bound. In the variants, the volume constant changes absolutely and $-K_X$ is still pseudo-effective, so the same conversion applies.
\end{proof}

\begin{corollary}\label{rank-one-bir}
Under the log Calabi--Yau hypotheses of
Theorem~\ref{main-rank-volume}, both $|mH+L|$ and
$|K_X+mH+L|$ are birational for every integral
pseudo-effective Weil divisor $L$ and every integer
\[
             m\ge\left\lceil416\sqrt{55440}\,
                                      \eps^{-9/2}\right\rceil.
\]
If the boundary on the rank-one ample model is nonzero,
one may replace $55440$ by $1728$.
\end{corollary}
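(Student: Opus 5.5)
The plan is to feed the rank-one volume bound of Theorem~\ref{main-rank-volume} into the conversion of Theorem~\ref{volume-to-bir}, with the adjoint hypothesis supplied by the log Calabi--Yau boundary. The conversion is applied on $X$ itself. It needs only that $X$ is $\eps$-lc, which follows from the pair $(X,B)$ being $\eps$-lc because $B\ge0$. It also needs $H-K_X$ big. Since $-K_X\sim_\Q B$ is effective and $H$ is nef and big, $H-K_X\sim_\Q H+B$ is big. Thus \eqref{M-precise} produces an integer $M\le52\eps^{-1}\max\{1,8/\sqrt{H^2}\}$ such that $|mH+L|$ and $|K_X+mH+L|$ are birational for all integers $m\ge M$ and all integral pseudo-effective $L$.

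Next I would insert the volume bound. By Lemma~\ref{H-model}, $H^2=\bar H^2$ on the ample model $\bar X$, which has Picard number one by hypothesis. Theorem~\ref{main-rank-volume} gives $H^2\ge\eps^7/55440$. If the boundary $\bar B$ on the ample model is nonzero, then $-K_{\bar X}\equiv\bar B$ is ample because $\rho(\bar X)=1$. Hence $\bar X$ is an $\eps$-lc Fano surface, and Proposition~\ref{rank-one-index} improves the bound to $H^2\ge\eps^7/1728$. In either case, $8/\sqrt{H^2}\le8\sqrt{55440}\,\eps^{-7/2}$, which is at least $1$. Therefore
\[
M\le52\eps^{-1}\cdot8\sqrt{55440}\,\eps^{-7/2}=416\sqrt{55440}\,\eps^{-9/2}.
\]
The same computation with $1728$ in place of $55440$ gives the refined constant.

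Since $M$ is an integer bounded by this real number, every integer $m\ge\lceil416\sqrt{55440}\,\eps^{-9/2}\rceil$ satisfies $m\ge M$, and the birationality follows. I expect no step to be a genuine obstacle. The only points to check are the following.
\begin{enumerate}[label=(\roman*)]
\item Theorem~\ref{volume-to-bir} is applied on the original $X$, not on the ample model, so that the birationality statements concern $X$ and arbitrary pseudo-effective $L$ on $X$. Only the number $H^2$ is transferred from $\bar X$.
\item The ambient $\eps$-lc hypothesis holds on $X$.
\item The maximum in \eqref{M-precise} is attained by the second term, which holds because $\eps\le1$.
\end{enumerate}
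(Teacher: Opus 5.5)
Your proposal is correct and follows the paper's proof: apply Theorem~\ref{volume-to-bir} on $X$ itself, using that $H-K_X\sim_\Q H+B$ is big, and then substitute the volume bound of Theorem~\ref{main-rank-volume}, with Proposition~\ref{rank-one-index} supplying the constant $1728$ when the boundary on the ample model is nonzero. One wording point: in (iii) you only need $8\sqrt{55440}\,\eps^{-7/2}\ge1$, not that the maximum in \eqref{M-precise} is attained by $8/\sqrt{H^2}$; that can fail when $H^2>64$, but your displayed estimate already uses the correct form.
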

\begin{proof}
Theorem~\ref{volume-to-bir} gives a sufficient bound
$52\eps^{-1}\max\{1,8/\sqrt{H^2}\}$, since
$H-K_X\equiv H+B$ is big. Substitute
Theorem~\ref{main-rank-volume}.
\end{proof}

\begin{remark}
Several special cases give stronger bounds. If $X$ is weak Fano and $H=-K_X$, \cite[Theorem 1.1]{Fano} gives $M=O(\eps^{-5/2}\sqrt{\ellog(\eps)})$. On the rational genus one ample model, Proposition~\ref{elliptic-degree} instead gives $M=O(\eps^{-7/2})$. In the numerically trivial anticanonical branch, Proposition~\ref{zero-thm} gives $M=O(\eps^{-1})$. These more precise statements concern the model on which the corresponding volume estimate is established; \eqref{refined-M} is uniform for all the nef and big polarisations.
\end{remark}

\begin{proposition}[Polynomial equivalence]\label{bir-volume-converse}
If $H$ is integral, nef and big on a klt surface and $|mH|$ is birational, then $m^2H^2\ge1$. Consequently, among $\eps$-lc surfaces with $H-K_X$ pseudo-effective, a uniform polynomial volume gap exists if and only if a uniform polynomial effective birationality bound exists.
\end{proposition}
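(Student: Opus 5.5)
The plan has two parts: first the inequality $m^2H^2\ge1$, then the equivalence, which will follow by plugging this inequality and Theorem~\ref{volume-to-bir} into each other.

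\emph{The inequality.} Assume $|mH|$ is birational. Take a resolution $\mu:W\to X$ that resolves the base locus of the mobile part, and write $\mu^*(mH)=M+F$. Here $M$ is base point free and big, since it defines a birational map; $F\ge0$ is the fixed part; and $\mu^*$ is the numerical pullback.
\begin{itemize}
\item Because $M$ is a base point free Cartier divisor whose morphism is birational onto its image $\Sigma\subset\PP^N$, we get $M^2=\deg\Sigma\cdot\deg(W\to\Sigma)\ge1$.
\item Because $\mu^*(mH)$ is nef and $M$ is nef, expanding gives
\[
m^2H^2=\mu^*(mH)\cdot(M+F)\ge\mu^*(mH)\cdot M=M^2+F\cdot M\ge M^2\ge1.
\]
\end{itemize}
No integrality of $mH$ beyond being a Weil divisor is needed, because $M$ is Cartier on $W$. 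One point must be checked: the mobile part of the reflexive linear system pulls back correctly. It does, since $|mH|$ is identified with sections of $\mathcal O_W(\lfloor\mu^*mH\rfloor)$, whose mobile part is $M$. The discrepancy $\mu^*(mH)-\lfloor\mu^*mH\rfloor$ is effective, so it is absorbed into $F$.

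\emph{The equivalence.} Suppose first that $H^2\ge c\eps^{k}$ uniformly. Theorem~\ref{volume-to-bir} in its pseudo-effective form, applied with $L=0$, gives birationality for every
\[
m\ge52\eps^{-1}\max\{2,8/\sqrt{H^2}\}=O(\eps^{-1-k/2}),
\]
which is a polynomial bound. Conversely, suppose $|mH|$ is birational for some $m\le C\eps^{-j}$ on every such surface. The first part then gives $H^2\ge m^{-2}\ge C^{-2}\eps^{2j}$, which is a polynomial gap.

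\emph{Main obstacle.} The only delicate point is the bookkeeping of Weil versus Cartier and of the fixed part in the first step. I would handle it by working with the round-down on the resolution, as above. The rest is direct substitution.
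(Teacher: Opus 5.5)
Your proposal is correct and follows the paper's proof essentially verbatim. You use the same decomposition $\mu^*(mH)=M+F$ with $M^2\ge1$ and the nefness of $\mu^*(mH)$ and $M$, and then obtain the equivalence by substituting this inequality and Theorem~\ref{volume-to-bir} into each other. Your extra check that $H^0(X,\mathcal O_X(mH))=H^0(W,\mathcal O_W(\lfloor\mu^*mH\rfloor))$, with the fractional part absorbed into $F$, fills in a step the paper leaves implicit.
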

\begin{proof}
Resolve the rational map and its base ideal by $f:W\to X$. Write
\[
f^*(mH)=P+E,
\]
where $P$ is the base-point-free integral moving divisor and $E\ge0$ is a $\Q$-divisor. The map defined by $P$ is birational, so $P^2$ is the positive integral degree of its image and is at least one. Both $f^*(mH)$ and $P$ are nef, so
\[
m^2H^2=f^*(mH)\cdot(P+E)
\ge f^*(mH)\cdot P=P^2+E\cdot P\ge1.
\]
A bound $m\le C\eps^{-b}$ therefore gives $H^2\ge C^{-2}\eps^{2b}$, with an absolute adjustment for integer rounding. Conversely, Theorem~\ref{volume-to-bir} converts $H^2\ge c\eps^a$ into $M=O(\eps^{-1-a/2})$.
\end{proof}

However, the equivalence here does not preserve the exponent order. In particular, applying the converse only to \eqref{refined-M} gives an eleventh-order volume gap with a logarithmic loss, weaker than the direct ninth-order estimate. No pseudo-effective threshold bound is required for either direction of this surface conversion.

\section{Sharpness and further questions}\label{examples-sec}
The first two families establish the optimal quadratic
anticanonical exponent and the optimal seventh-order
polarisation and cubic threshold exponents. We then
compare them with the Markov family, where the
anticanonical volume stays fixed while the polarisation
volume tends to zero.
\subsection{Sharp quadratic anticanonical order}
\begin{example}\label{quadratic-example}
For an integer $u\ge1$ put $r=u^2+u+1$, and let
$\mu_r$ act on $\PP^2$ by
\[
                 [x:y:z]\longmapsto[x:\zeta y:\zeta^{u+1}z].
\]
Let $Y_u$ be the quotient. The three character differences
$1,u,u+1$ are coprime to $r$, so the quotient map is
quasi-\'etale and its only singularities are the images of
the coordinate points. All three have cyclic type
$\frac1r(1,u+1)$, up to interchanging coordinates:
$-u(u+1)\equiv1\pmod r$ identifies the other two charts.
Thus $Y_u$ is a rank-one klt Fano surface and
\[
             (-K_{Y_u})^2=\frac9r,\qquad
             \eps_u:=\mld(Y_u)=\frac{u+2}{r}.
\]
To verify the second equality, minimize
$j+((u+1)j\bmod r)$ for $1\le j<r$.
For $j\ge u+2$ the sum is larger than $u+2$.
For $1\le j\le u$, no reduction modulo $r$ occurs and
the sum is $(u+2)j$. For $j=u+1$ it is $2u+1\ge u+2$.
The minimum is therefore $u+2$, attained at $j=1$.
This is the cyclic quotient age formula, equivalently the
discrepancy calculation on its toric resolution.
Consequently
\[
 \frac{(-K_{Y_u})^2}{\eps_u^2}
       =\frac{9(u^2+u+1)}{(u+2)^2}\longrightarrow9.
\]
For every $p<2$, $(-K_{Y_u})^2/\eps_u^p\to0$.
This proves sharpness of the exponent in
Theorem~\ref{main-rank-fano}.
\end{example}

\subsection{Sharp seventh-order volume and cubic threshold}
\begin{theorem}\label{seventh-example}
Let $n\ge2$ be even, and put
\[
 a=n^2-1,\quad b=n^2+1,\quad c=2n^3,\qquad
 X_n=\PP(a,b,c),\quad H_n=\mathcal O_{X_n}(1).
\]
Then $X_n$ is a well-formed rank-one Fano surface and
\begin{align}
 \eps_n:=\mld(X_n)&=\frac1n,\label{seventh-mld}\\
 H_n^2&=\frac1{2n^3(n^4-1)},\label{seventh-volume}\\
 \tau(X_n,H_n)=\lambda(X_n,H_n)&=2n^2(n+1),\label{cubic-example}\\
 (-K_{X_n})^2&=\frac{2n(n+1)}{(n-1)(n^2+1)}.\label{seventh-anti}
\end{align}
In particular $H_n^2\sim\frac12\eps_n^7$ and
$\tau(X_n,H_n)\sim2\eps_n^{-3}$.
\end{theorem}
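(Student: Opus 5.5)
The plan is to compute everything directly from the weighted projective plane, using the standard toric and cyclic quotient description of its three charts.

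\emph{Well-formedness and Picard number.} Since $n$ is even, $a=n^2-1$ and $b=n^2+1$ are odd and coprime to $n$. Hence $\gcd(a,b)=\gcd(n^2-1,2)=1$, and both are coprime to $c=2n^3$. So $X_n$ is well-formed. It is toric of Picard number one with $-K_{X_n}=\mathcal O(a+b+c)$ ample. Its only singular points are the three coordinate points, of types $\frac1a(b,c)$, $\frac1b(a,c)$ and $\frac1c(a,b)$.

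\emph{Intersection numbers and thresholds.} Use $\mathcal O(1)^2=1/(abc)$. This gives \eqref{seventh-volume} at once. Next, $a+b+c=2n^2+2n^3=2n^2(n+1)$, so
\[
(-K)^2=\frac{4n^4(n+1)^2}{2n^3(n^4-1)}=\frac{2n(n+1)}{(n-1)(n^2+1)},
\]
which is \eqref{seventh-anti}. Since $\rho=1$, the divisor $K+tH=\mathcal O(t-2n^2(n+1))$ is pseudo-effective exactly when it is nef, namely when $t\ge2n^2(n+1)$. This proves \eqref{cubic-example}.

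\emph{The minimal log discrepancy.} This is the main step. For a cyclic point $\frac1r(u,v)$ the mld is the minimum over $1\le j<r$ of $\bigl((ju\bmod r)+(jv\bmod r)\bigr)/r$, as in \eqref{r1-eq:age}. I treat the three charts separately.
\begin{enumerate}[label=(\roman*)]
\item At $P_c$, the type is $\frac1{2n^3}(n^2-1,n^2+1)$. Each numerator sum is congruent to $j(a+b)=2n^2j$ modulo $2n^3$, so it is a positive multiple of $2n^2$. Hence every age is at least $2n^2/2n^3=1/n$, and $j=1$ attains $1/n$.
\item At $P_a$, we have $b\equiv2$ and $c\equiv2n$ modulo $n^2-1$. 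Since $n^2-1$ is odd, rescaling gives type $\frac1{n^2-1}(1,n)$ with $n^2\equiv1$. The lattice points are $(x,y)$ with $y\equiv nx$. For $x\le n-2$ one has $x+y=(n+1)x$. For larger $x$, the symmetry $x\equiv ny$ handles small $y$, and otherwise $x+y\ge2(n-1)\ge n+1$. So the minimum is $(n+1)/(n^2-1)>1/n$.
\item At $P_b$, we have $a\equiv-2$ and $c\equiv-2n$ modulo $n^2+1$, so the type is $\frac1{n^2+1}(1,n)$ with $n^2\equiv-1$. The same analysis gives minimum $(n+1)/(n^2+1)$, and this exceeds $1/n$ because $n^2+n>n^2+1$.
\end{enumerate}
Together these give $\mld(X_n)=1/n$, which is \eqref{seventh-mld}.

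The only place needing care is the exhaustive minimisation in (ii) and (iii). There I would argue with the sublattice $\{y\equiv nx\}$ in the square $[0,r)^2$ and its symmetry under exchanging $x$ and $y$. The asymptotics $H_n^2\sim\frac12n^{-7}$ and $\tau\sim2n^3$ then follow immediately.
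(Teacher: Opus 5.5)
Your computation reaches the correct values by a different route from the paper's, but the justification you give in step (iii) is wrong as written. Modulo $n^2+1$ one has $n^2\equiv-1$, so $y\equiv nx$ inverts to $x\equiv-ny$, not $x\equiv ny$. The lattice $\{y\equiv nx\}$ is therefore not symmetric under $x\leftrightarrow y$: it contains $(1,n)$ but not $(n,1)$. So ``the same analysis'' is not available there.

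The claimed minimum $(n+1)/(n^2+1)$ is nevertheless true, and one argument handles (ii) and (iii) together without any symmetry:
\begin{itemize}
\item if $1\le x\le n-1$, then $nx\le n^2-n<r$ for $r=n^2\mp1$, so $y=nx$ and $x+y=(n+1)x\ge n+1$;
\item if $x\ge n$, then $x+y\ge n+1$ because $y\ge1$;
\item $x=1$ attains $n+1$.
\end{itemize}
This also repairs a slip in (ii), where the bound $2(n-1)\ge n+1$ fails for $n=2$. That case is harmless, since $P_a$ is then the $A_2$ point $\frac13(1,2)$, but your argument does not cover it.

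Note also that \eqref{r1-eq:age} gives only an \emph{upper} bound for the mld. For the lower bound you need the standard fact that the age minimum \emph{is} the mld. Concretely, non-toric divisors arise by further point blowups of a toric resolution, with log discrepancies $2$, $1+\alpha$ or $\alpha+\beta$, so they cannot go below the toric minimum. The paper states this explicitly, and you should too.

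For comparison, the paper never computes the chart types. It realises $X_n$ by the fan with rays $(0,-1)$, $(b,n)$, $(-a,n)$ in $\Z^2$ and bounds the gauge $\psi$ of the ray triangle by heights:
\begin{itemize}
\item $\psi(x,y)\ge y/n$ for $y\ge1$;
\item $\psi\ge1$ for $y\le-1$;
\item on the line $y=0$, the section $[-a/(n+1),b/(n+1)]\times\{0\}$ gives $\psi>1/n$;
\item the point $(0,1)$ attains $1/n$.
\end{itemize}
Your step (i) is exactly this height bound read in the cone of $P_c$. There $\psi=y/n$, which is why every age numerator is a multiple of $2n^2$.

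The gauge argument treats all three charts uniformly and matches the geometry-of-numbers viewpoint of Proposition~\ref{r1-prop:toric}. Your chart-by-chart computation is more elementary and gives slightly more: the exact local values $1/n$, $1/(n-1)$ and $(n+1)/(n^2+1)$ at $P_c$, $P_a$, $P_b$. Your intersection numbers, anticanonical volume and threshold computation agree with the paper's.
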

\begin{proof}
The weights are pairwise coprime: $a,b$ are odd with
$\gcd(a,b)=1$, and each is coprime to $2n^3$.
In the lattice $N=\Z^2$ take the primitive fan rays
\[
                u=(0,-1),\qquad v=(b,n),\qquad w=(-a,n).
\]
Their consecutive determinants are $b,c,a$, and
$cu+av+bw=0$. The greatest common divisor of the
determinants is one, so these rays generate $N$ and define
the stated weighted projective plane. We use the usual
toric discrepancy description; see \cite{Fulton}.

Let $P$ be the triangle with vertices $u,v,w$, and let
$\psi$ be its gauge, the piecewise linear function equal
to one on these three rays. A primitive lattice vector
$z$ defines a toric valuation of log discrepancy $\psi(z)$.
For $z=(x,y)\in N\setminus\{0\}$, if $y\ge1$, the maximal
height $n$ of $P$ gives $\psi(z)\ge y/n\ge1/n$.
If $y\le-1$, the minimal height $-1$ gives
$\psi(z)\ge-y\ge1$. Finally
\[
        P\cap\{y=0\}=
        \left[-\frac a{n+1},\frac b{n+1}\right]\times\{0\}.
\]
For a nonzero lattice point on this line, the gauge is at
least $(n+1)/b$ or $(n+1)/a$, both greater than $1/n$.
Conversely $(0,n)$ lies on the upper edge, so
$\psi(0,1)=1/n$.

This calculation also controls non-toric divisorial
valuations. On a smooth toric resolution the exceptional
boundary has simple normal crossings; subsequent point
blowups have discrepancies $2$, $1+\alpha$, or
$\alpha+\beta$. None is smaller than $1/n$, and every
divisorial valuation over a smooth surface is obtained
by point blowups. This proves \eqref{seventh-mld}.
Weighted intersection theory \cite{Dolgachev}
gives $H_n^2=1/(abc)$, while
\[
              -K_{X_n}\sim(a+b+c)H_n=2n^2(n+1)H_n.
\]
These identities prove all the other formulas.
\end{proof}

\begin{corollary}\label{seventh-cy-example}
No uniform estimate $H^2\ge C\eps^p$ with $C>0$ and
$p<7$ holds for all rank-one $\eps$-lc log Calabi--Yau
pairs with integral ample $H$. No uniform estimate
$\tau(X,H)\le C\eps^{-p}$ with $p<3$ holds even for
rank-one $\eps$-lc Fano surfaces.
\end{corollary}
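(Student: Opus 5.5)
The plan is to deduce both assertions directly from Theorem~\ref{seventh-example}, arguing by contradiction with the family $X_n=\PP(n^2-1,n^2+1,2n^3)$ for even $n$. These surfaces are rank-one Fano. The first task is to promote each $X_n$ to a rank-one $\eps_n$-lc log Calabi--Yau pair, with $\eps_n=1/n$ and the integral ample polarisation $H_n=\mathcal O(1)$.

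\textbf{Constructing the boundary.} Since $-K_{X_n}$ is ample, we have $-K_{X_n}\sim(a+b+c)H_n$. Some multiple $kH_n$ is very ample and Cartier, for instance with $k=abc$. Pick a general member $G\in|k(a+b+c)H_n|$ and put $B_n=G/k$, so that $B_n\sim_\Q-K_{X_n}$.

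\textbf{Discrepancies are preserved.} By Bertini, $G$ is smooth, avoids the three singular points, and meets the toric boundary on a fixed toric resolution transversally. Hence $G$ is disjoint from every exceptional curve. For $k\ge2$ the coefficient $1/k$ lies in $(0,1/2]$. On the smooth locus, valuations centred on $G$ have log discrepancy at least $1-1/k\ge1/2\ge1/n$. Valuations over the singular points are unaffected by $B_n$. Therefore $\mld(X_n,B_n)=\mld(X_n)=1/n$ by \eqref{seventh-mld}, and $(X_n,B_n)$ is a rank-one $\eps_n$-lc log Calabi--Yau $\Q$-pair with $H_n$ integral and ample.

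\textbf{The volume assertion.} Suppose $H^2\ge C\eps^p$ held with $C>0$ and $p<7$. By \eqref{seventh-volume},
\[
\frac{H_n^2}{\eps_n^{p}}=\frac{n^{p}}{2n^3(n^4-1)}\longrightarrow0,
\]
which contradicts the assumed bound for $n$ large.

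\textbf{The threshold assertion.} By \eqref{cubic-example}, $\tau(X_n,H_n)\eps_n^{p}=2n^2(n+1)n^{-p}\to\infty$ whenever $p<3$. This contradicts $\tau\le C\eps^{-p}$ already on the rank-one Fano surfaces $X_n$.

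The only step with any content is checking that the general boundary member preserves the minimal log discrepancy, which the Bertini argument above settles. Everything else is a limit computation.
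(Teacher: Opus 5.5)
Your proposal is correct and follows the paper's proof essentially verbatim: you take a general smooth member of a sufficiently divisible multiple of $-K_{X_n}$ that avoids the singular points, scale it by a small coefficient so that the pair stays $\eps_n$-lc, and then read both limits off \eqref{seventh-volume} and \eqref{cubic-example}. For the Bertini step only base-point-freeness of $|k(a+b+c)H_n|$ is actually needed, and your choice $k=abc$, which makes $kH_n$ Cartier, supplies it.
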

\begin{proof}
Choose a sufficiently divisible $N$ and a general
$D\in|-NK_{X_n}|$ that is smooth and avoids the singular
points. Arrange $1-1/N\ge1/n$. Then $B_n=D/N$ makes
$(X_n,B_n)$ an $\eps_n$-lc log Calabi--Yau pair. By
\eqref{seventh-volume},
$H_n^2/\eps_n^p\sim\frac12\eps_n^{7-p}\to0$ for $p<7$.
By \eqref{cubic-example},
$\tau(X_n,H_n)\eps_n^p\sim2\eps_n^{p-3}\to+\infty$
for $p<3$.
\end{proof}

The same family also shows the sharpness of the common
Weil-index exponent in Proposition~\ref{rank-one-index}:
since $\Cl(X_n)=\Z[H_n]$ and the weights are pairwise
coprime, its common Cartier multiple is
$abc=2n^3(n^4-1)\asymp\eps_n^{-7}$.

\begin{remark}\label{seventh-bir-obstruction}
Proposition~\ref{bir-volume-converse} also gives a necessary
growth rate for birational systems on this family:
\[
 |mH_n|\text{ birational}\quad\Longrightarrow\quad
 m\ge\frac1{\sqrt{H_n^2}}
   =\sqrt{2n^3(n^4-1)}\asymp\eps_n^{-7/2}.
\]
This is a lower obstruction, not a calculation of the
actual birationality threshold. It leaves a gap from
the sufficient rank-one bound $O(\eps^{-9/2})$.
\end{remark}
\subsection{The Markov family: fixed anticanonical volume}
The Markov weighted planes of Hacking--Prokhorov
\cite[Theorem 1.1]{HP} give a useful comparison:
their anticanonical volume is fixed, whereas their
integral Weil polarisation volumes tend to zero.

\begin{example}\label{markov-example}
There are Fano surfaces $X_i$ with integral ample Weil
divisors $H_i$ and actual discrepancy thresholds
$\eps_i\to0$ such that
\[
 H_i^2\asymp\eps_i^4,\qquad
 \tau(X_i,H_i)=\lambda(X_i,H_i)\asymp\eps_i^{-2},
 \qquad (-K_{X_i})^2=9.
\]
Each admits an $\eps_i$-lc log Calabi--Yau $\Q$-boundary.
\end{example}
\begin{proof}
Starting with $(b,c)=(1,2)$, iterate
$(b,c)\mapsto(c,3c-b)$. This preserves
\[
                         1+b^2+c^2=3bc,
\]
as well as coprimality and primeness to three.
The entries tend to infinity and, after the first
mutation, $2<c/b<3$. Put $X=\PP(1,b^2,c^2)$ and
$H=\mathcal O_X(1)$. The nontrivial coordinate charts
are cyclic quotients of the form
$\frac1{n^2}(1,na-1)$ with $\gcd(a,n)=1$, where
$n=b$ or $c$; for example $a\equiv3c\pmod b$ in the
first chart. We use the weighted formulas of
\cite{Dolgachev}.

Such a quotient has minimal log discrepancy $1/n$.
Indeed, by \cite[Section 4]{Reid} its discrepancy is
the minimum of
\[
             \frac{j+\overline{(na-1)j}}{n^2},
                    \qquad 1\le j<n^2,
\]
where the bar is the least positive residue modulo $n^2$.
Every numerator is a positive multiple of $n$.
Choose $1\le j<n$ with $aj\equiv1\pmod n$;
then the residue is $n-j$, giving equality.
Consequently the actual discrepancy threshold is $\eps=1/c$.

Weighted intersection theory gives
\[
       H^2=\frac1{b^2c^2}\asymp\eps^4,
       \qquad -K_X\sim3bcH.
\]
Thus the thresholds are $3bc\asymp\eps^{-2}$ and
$(-K_X)^2=9$. A general anticanonical member with
sufficiently small coefficient gives the required
boundary, as in Proposition~\ref{big-boundary-thm}.
\end{proof}
\subsection{Further questions}
The first question is whether every $\eps$-lc log
Calabi--Yau surface pair with an integral nef and big Weil
polarisation satisfies $H^2\ge c\eps^7$ for an absolute
$c>0$. Theorem~\ref{seventh-example} forces exponent at
least seven, while Theorem~\ref{main} gives exponent nine
with a logarithmic loss. In the big-boundary branch our
proof combines $\vol(-K_X)\gg\eps^3/\ellog(\eps)$
with $\tau(X,H)\ll\eps^{-3}$. A quadratic general
anticanonical bound would give exponent eight by this
comparison alone. Reaching seven would require a further
relation between the anticanonical volume and the threshold,
or a direct intersection argument. The two sharp families
above also show that the separate extremal orders need
not occur on the same surface.

The threshold reduction does not transfer the rank-one
volume bound back to the original surface.
In the notation of Lemma~\ref{scaling-step},
$H=g^*H_Z-uC$ with $u\ge0$ at a divisorial MMP step, so
\[
                     H^2=H_Z^2+u^2C^2\le H_Z^2.
\]
Thus the MMP preserves the threshold but can increase
the polarisation volume.

The second question concerns the factor $\eps^{-1}$ in
Theorem~\ref{volume-to-bir}. Can this loss be reduced?
For rank-one ample models the theorem gives the sufficient
order $\eps^{-9/2}$, whereas
Remark~\ref{seventh-bir-obstruction} gives only a necessary
order $\eps^{-7/2}$. Determining the actual birationality
threshold in the seventh-order family would help clarify
this gap.

More generally, a polynomial volume gap remains open for
projective $\eps$-lc surfaces with an integral nef and big
Weil divisor $H$ satisfying $H-K_X$ pseudo-effective,
without a log Calabi--Yau assumption.
The threshold theorem and the volume-to-birationality
conversion apply in this setting, but our volume argument
uses the discrepancy bound for the pair to control its
anticanonical models. Ambient $\eps$-lc singularities on
a surface of Fano type need not provide an anticanonical
boundary with the same discrepancy bound.

\appendix
\section{Classification tables and numerical estimates}\label{classification-app}
This appendix records the complete ranges used in
Proposition~\ref{classification-reduction}. The external
classification supplies the geometric coverage and
admissible parameter ranges; the continuant formulas
and discrepancy estimates of Section~\ref{rank-one-sec}
give the numerical bounds recorded here. All finite
lists and unbounded parameter families are retained.
\subsection{The published Lacini correspondence}
The published classification \cite{Lacini}, building on \cite{KeelMcKernan}, has 24 series; the older arXiv version has 22. We use the published numbering and the correspondence in \cite[Section 7, Tables 3--4]{PP3}. The notation is fixed in Table~\ref{tab:classification-notation}.
Here $C$ denotes a noncanonical cyclic quotient, $F$ a noncanonical quotient fork, and $A$ a canonical point. The following table accounts for every published label. ``Fixed'' means that no unbounded chain parameter remains in that entry. ``Rays'' refers to the three explicit unbounded rays in $A_{23}$, not to arbitrary chains.

\begingroup\small
\begin{longtable}{@{}p{.10\textwidth}p{.39\textwidth}p{.39\textwidth}@{}}
\caption{Correspondence with the published Lacini series}\label{tab:lacini-crosswalk}\\
\toprule
LDP & Correspondence over $\mathbb C$ & Singularity filter and disposition\\\midrule\endfirsthead
\caption[]{Correspondence with the published Lacini series (continued)}\\
\toprule
LDP & Correspondence over $\mathbb C$ & Singularity filter and disposition\\\midrule\endhead
\bottomrule\endlastfoot

1 & \cite[Lemma 6.6]{PP3} & $2C+A$; fixed.\\
2--4 & The fixed baskets listed in Subsection~\ref{r1-higher} & $2C$; fixed.\\
5 & $B_1$ & $2C$; fixed.\\
6 & A fixed basket in Subsection~\ref{r1-higher} & $2C+A$; fixed.\\
7 & $B_{14}$ & $F+C+A$; two parameter values.\\
8 & $A_{15}$ & $F+C$; fixed.\\
9 & $B_3$ for seed $A_1+A_5$; a fixed remaining basket for seed $3A_2$ & $2C$ or $2C+A$; fixed.\\
10 & $B_8,B_9$ & $2C+A$; fixed.\\
11 & $B_8$ & $2C+A$; fixed, overlapping LDP10.\\
12 & $B_{14}$ & $F+C+A$; overlaps LDP7.\\
13 & (1): $A_2,A_3$; (2): $A_4,A_6$; (3): $B_6$; (4): $B_4,B_5$; (5)--(8): fixed remaining baskets; (9): $B_{10},B_{11}$; (10): $B_{12}$ & Two noncanonical cyclic points, with zero or one canonical companion; all these parameters are bounded.\\
14 & $A_{23}$ & $2C$; retain its allowed rays and finite part.\\
15 & $A_{13}$ & $F+C$; three parameter values.\\
16 & $A_{22}$ & $F+C$; fixed.\\
17 & (1),(2): $A_{23}$; (3): $A_5,A_9$ & $2C$; the first two subseries require the $A_{23}$ calculation.\\
18 & $A_{23}$ & $2C$; same parameter ranges.\\
19 & $A_{11}$ & $F+C$; fixed.\\
20 & Height at most two & Use the complete Tables 9--11 case analysis below.\\
21 & Characteristic-five example & Absent over $\mathbb C$.\\
22 & Height at most two, or an elliptic descendant & Already reduced to the low-height or one-noncanonical-point arguments.\\
23 & Height at most two or elliptic descendants, except the seed branches 5.1(3)--(5) & The exceptional seed branches are precisely the higher-height families listed in Subsection~\ref{r1-higher}.\\
24 & Height at most two & Use Tables 9--11.\\
\end{longtable}
\endgroup

The supplementary baskets $A_{16}$ and $B_2$
in \cite{PP3}, noted there as apparently absent from
Lacini's list, are both included. The possibly omitted surface in \cite[Example 3.13]{Nagaoka} has basket
\[
 [2,3,2,2]+[2,3,2]+[2,2,2],
 \quad V=1/22,\quad \mld(X)=5/11;
\]
it is explicitly included in the finite-basket calculation. The exact assertion that all the remaining higher-height baskets have height four is announced for a subsequent paper in \cite{PP3}. Our estimate only uses their displayed baskets and does not rely on that announced height equality.

Belousov's theorem \cite[Theorems 1.1 and 2.1]{Belousov} bounds the total number of singularities by four. The refined correspondence gives a stronger filter here: every higher-height basket requiring a new check has exactly two noncanonical points, either $2C$ or $F+C$, with at most one canonical companion. Arbitrarily variable configurations with three noncanonical cyclic points occur in the low-height lists, where the section or toric geometry already controls them.

\subsection{Low-height cases}
The notation is as in Table~\ref{tab:classification-notation}. A displayed constant $C$ means $V\ge\eps^2/C$. For product estimates the assertion follows from \eqref{r1-eq:volume}; fixed canonical companions are omitted. ``Excluded'' means non-klt, not merely absent from a numerical search.

\begingroup\small
\begin{longtable}{@{}p{.16\textwidth}p{.12\textwidth}p{.64\textwidth}@{}}
\caption{Low-height volume estimates: $V\ge\eps^2/C$}\label{tab:low-height}\\
\toprule
Rows & $C$ & Justification\\\midrule\endfirsthead
\caption[]{Low-height volume estimates: $V\ge\eps^2/C$ (continued)}\\
\toprule
Rows & $C$ & Justification\\\midrule\endhead
\bottomrule\endlastfoot

$9(1\!:\!3),9(5)$ & $1$ & The height-one section calculation in \eqref{r1-heightone}, $V\ge\eps$.\\
$9(4),9(6)$ & $2$ & One noncanonical cyclic point.\\
$9(7)$ & $15/2$ & The direct section estimate $V\ge\min\{\eps,2/15\}$.\\
$9(8),9(10)$ & $4$ & At most one noncanonical fork.\\
$9(9)$ & $16$ & A fork and a cyclic companion of determinant at most four.\\
$9(11),9(12)$\newline $9(19)$ & $16$ & A fork and the companion $[3,\two{b-3}]$, of determinant $2b-3\le4/\eps$.\\
$9(13)$ & $6$ & $h_F=1$; the variable companion has determinant at most $3b\le6/\eps$.\\
$9(14)$ & $18$ & Arms $(2,3,d)$, $3\le d\le5$: $h_F=6-d$, and the companion determinant is at most $d(b-1)\le2d/\eps$.\\
$9(15\!:\!17)$ & $4$ & At most one noncanonical point.\\
$9(18)$ & $3$ & Formula \eqref{r1-eq:row18}.\\
$9(20),9(21)$ & excluded & Parabolic arm triple $(2,3,6)$.\\
$9(22)$ & $4$ & Only $r=2$ is klt; $h_F=1$, and its extra $[r]$ is canonical.\\
$9(23)$ & $24$ & Only $r=2,3$ are klt; use $h_F=4-r$, the companion bound $6/\eps$, and $r\le3$.\\
$9(24)$ & $16$ & A fork and a fixed cyclic companion of determinant at most four.\\
$9(25\!:\!27)$ & excluded & Non-klt benches.\\\midrule
$10(1),10(2)$ & $1$ & Toric and coupled-cyclic arguments, respectively.\\
$10(3),10(12)$\newline $10(22),10(25)$ & $2$ & One noncanonical cyclic point.\\
$10(4),10(7)$\newline $10(8),10(14)$\newline $10(23)$ & excluded & Parabolic fork arms.\\
$10(5)$ & $40$ & $m=2,3,4$; $h_F=5-m$, fixed determinant $2m-1$, and variable determinant $2r-3\le4/\eps$.\\
$10(6)$ & $4$ & $h_F=1$ and variable companion determinant at most $4/\eps$.\\
$10(9),10(10)$ & $28$ & A fork and a fixed companion of determinant at most seven; row 10 is better.\\
$10(11)$ & $1/2$ & Lemma~\ref{r1-lem:sections}.\\
$10(13),10(16)$ & $32$ & Fork determinant at most $4/\eps$, cyclic determinant at most $8/\eps$.\\
$10(15)$ & $8$ & Proposition~\ref{r1-prop:row15}.\\
$10(17)$ & $7$ & $h_F=1$, fixed cyclic determinant at most seven.\\
$10(18)$ & $26$ & $h_F=7-m-n>0$; the companion has determinant $6m-7$ or $6m-5$, with $m+n\le6$.\\
$10(19)$ & $20$ & $d=4,5$, $h_F=6-d$, companion determinant at most $3d-2$.\\
$10(20)$ & $15$ & Proposition~\ref{r1-prop:row20}.\\
$10(21),10(29)$ & $16$ & Only two forks can be noncanonical.\\
$10(24)$ & $12$ & $m=2,3$, $h_F=3,1$; variable companion at most $4/\eps$.\\
$10(26)$ & $36$ & Fork bound $4/\eps$, companion bound $9/\eps$.\\
$10(27)$ & $18$ & $h_F=1$, companion at most $3(n+4)\le18/\eps$.\\
$10(28)$ & $45$ & $h_F=6-d$, $3\le d\le5$; companion at most $d(n+3)\le5d/\eps$, and $d(6-d)\le9$.\\
$10(30)$ & excluded & A non-klt bench.\\
$5.15(1),(2)$ & $16$ & One noncanonical cyclic point, or two forks.\\\midrule
$11(3)$ & $2$ & Its forks are canonical; only the cyclic point may be noncanonical.\\
$11(4)$ & $32$ & One noncanonical fork and a cyclic companion of determinant $4n\le8/\eps$.\\
$11(5)$ & $16$ & At most two noncanonical forks.\\
$11(6)$ & $32$ & Two noncanonical forks, or a fork and a fixed companion of determinant eight.\\
\end{longtable}
\endgroup

All bounded-twig estimates in this table are elementary continuants. For example $d([3,\two\ell])=2\ell+3$ and $d([3,\two\ell,3])=4\ell+8$. A parabolic arm triple has $c=0$ in \eqref{r1-eq:fork}, so cannot occur on a klt surface. Thus the low-height part is bounded by $45$, by the displayed determinant estimates.

\subsection{Remaining parameter families and finite baskets}\label{r1-higher}
The correspondence above includes all 24 published Lacini series, the additional $A_{16},B_2$ baskets, and the Nagaoka correction.

For a two-cyclic ray, choose the displayed parameter vertex whose discrepancy is $N/D_1$. Since $\eps\le N/D_1$,
\[
                   D_1D_2\eps^2\le N^2D_2/D_1.
\]
The quotient of two positive affine functions has constant derivative sign. Its supremum on $k\ge k_0$ is therefore the larger of its initial value and its limit. The following table proves a uniform constant less than $40$ on every such ray.

\begin{table}[htbp]\centering\small
\caption{Bounds for two-cyclic parameter families}\label{tab:cyclic-rays}
\begin{tabular}{@{}lrrrrr@{}}
\toprule
Family & $D_1$ & $D_2$ & $N$ & $k_0$ & Bound\\\midrule
$A_1$ & $18k-27$ & $2k-1$ & $9$ & $4$ & $63/5$\\
$A_7$ & $12k-16$ & $3k-1$ & $8$ & $3$ & $128/5$\\
$A_8$ & $16k-24$ & $2k+1$ & $8$ & $3$ & $56/3$\\
$B_7$ & $8k-10$ & $4k-1$ & $6$ & $3$ & $198/7$\\
$B_{13}$ & $16k-24$ & $2k-1$ & $8$ & $3$ & $40/3$\\
$H_{4a}$ & $9k-12$ & $3k-1$ & $6$ & $3$ & $96/5$\\
$A_{23}$, ray 1 & $8k-10$ & $4k+3$ & $6$ & $3$ & $270/7$\\
$A_{23}$, ray 2 & $9k-3$ & $3k+8$ & $6$ & $3$ & $51/2$\\
$A_{23}$, ray 3 & $6k-1$ & $6k+5$ & $5$ & $3$ & $575/17$\\\bottomrule
\end{tabular}
\end{table}

Here $H_{4a}$ is the $4A_2$-seed branch of \cite[Section 7, LDP23]{PP3}, with basket $[2,2,k,2,2]+[\two{k-2},3,2]+[2,2]$. The numerator $N$ is the sum of the determinants of the two sides of the chosen vertex; it is not an estimate of an unrelated cyclic singularity.

The second $4A_2$-seed branch $H_{4b}$ has a fork with arms $(2,3,3)$, central weight $k$, cyclic companion $[3,\two{k-3},3,2]$, and canonical companion $[2,2]$. For fork--cyclic rays retain the exact $h_F$ in \eqref{r1-eq:fork}. The following constants bound $D_FD_C\eps^2$; if a specialisation becomes canonical, dropping its determinant only helps.

\begin{table}[htbp]\centering\small
\caption{Bounds for fork--cyclic parameter families}\label{tab:fork-rays}
\begin{tabular}{@{}p{.22\textwidth}p{.51\textwidth}r@{}}
\toprule
Families & Data and estimate & Constant\\\midrule
$A_{10}$ & $h_F=4$, $D_C=6k-5$, $k\eps\le1$ & $24$\\
$A_{12},A_{19}$ & $h_F=2$, $D_C=12k+5$; separate $k=2$ & $58$\\
$A_{14},A_{21}$ & Arms $(2,3,d)$; $D_C\le6dk$, $h_F=6-d$ & $96$\\
$A_{17}$ & $h_F=4$, $D_C=8k+2$, $k\ge3$ & $35$\\
$A_{18}$ & $h_F=2$, $D_C=12k+1$, $k\ge3$ & $25$\\
$A_{20}$ & $h_F=3$, $D_C=6k+7$, $k\ge3$ & $25$\\
$B_{16}$ & Arms $(2,4,d)$, $d=2,3$; $D_C\le4dk$ & $64$\\
$H_{4b}$ & $h_F=3$, $D_C=6k-5$, $k\ge3$ & $18$\\\bottomrule
\end{tabular}
\end{table}

For clarity, in $A_{14},A_{21}$, the $k\ge3$ estimate is $6d(6-d)\le54$. The allowed noncanonical $k=2$ cases have $d=4,5$, giving at most $12d(6-d)\le96$ using $\eps\le1$. In $B_{16}$, the analogous $k\ge3$ bound is $32$, and $64$ safely includes $k=2$ whenever it is admissible.

The remaining checks are finite; we give the input chains and rational
bounds explicitly. For a chain $[b_1,\ldots,b_s]$, let $l_i,r_i$
be the determinants strictly to the left and right of vertex $i$.
Its minimal log discrepancy is
\[
 \mu=\min\left\{1,\min_i\frac{l_i+r_i}{d([b_1,\ldots,b_s])}\right\}.
\]
The minimum over the resolution computes the ambient minimum:
further point blowups have discrepancy $2$, $1+\alpha_i$, or
$\alpha_i+\alpha_j$, and cannot decrease it.
For two cyclic points the table records $D_1D_2\mu^2$, where
$\mu$ is the minimum of their two values. For a fork and a cyclic
point it records $4D_C\mu_C$; indeed
$D_FD_C\eps^2\le4D_C\eps\le4D_C\mu_C$.
The symbol $F+$ suppresses the fork, whose exact type is immaterial
to this bound. Canonical companions are suppressed throughout this
finite table, since they do not enter \eqref{r1-eq:volume}.
All entries follow from \eqref{r1-eq:continuant} and the displayed
minimum, so no external computation is required.
\begingroup\small
\begin{longtable}{@{}p{.17\textwidth}p{.61\textwidth}r@{}}
\caption{Finite baskets and their determinant bounds}\label{tab:finite-baskets}\\
\toprule
Family & Noncanonical cyclic chains (and a fork, if present) & Bound\\
\midrule\endfirsthead
\caption[]{Finite baskets and their determinant bounds (continued)}\\
\toprule
Family & Noncanonical cyclic chains (and a fork, if present) & Bound\\
\midrule\endhead
\bottomrule\endlastfoot

$(\star)$ & $[2,2,3,\two{5}]+[3,2]$ & $15$\\
$A_{2}$ & $[2,2,3,3,\two{5}]+[3,2]$ & $845/73$\\
$A_{9}$ & $[2,3,2,3,\two{3}]+[2,3,2,2]$ & $396/13$\\
$B_{1}$ & $[2,3,\two{5}]+[3,2,2]$ & $112/5$\\
$B_{3}$ & $[2,3,\two{5}]+[2,4,2,2]$ & $500/17$\\
$B_{4}$ & $[2,3,3,2,2]+[2,3,\two{4}]$ & $1088/29$\\
$B_{6}$ & $[2,2,3,\two{6}]+[3]$ & $300/31$\\
$B_{9}$ & $[2,5,\two{4}]+[2,3,\two{3}]$ & $686/37$\\
$B_{10}$ & $[2,3,3,\two{3}]+[2,3,2,2]$ & $891/37$\\
$B_{12}$ & $[\two{3},3,\two{4}]+[3]$ & $243/29$\\
\cite[Lemma 6.6]{PP3} & $[2,3,\two{3}]+[3,\two{3}]$ & $162/7$\\
LDP2--4 & $[2,2,3,2]+[\two{5},3]$ & $325/11$\\
LDP6 & $[2,2,3,2,2]+[3,2,2]$ & $84/5$\\
LDP9, second & $[2,4,2,2]+[2,2,3,2,2]$ & $375/17$\\
LDP13.5 & $[2,2,3,\two{3}]+[3,2]$ & $245/19$\\
LDP13.6 & $[2,3,\two{3}]+[3,\two{4}]$ & $198/7$\\
LDP13.7 & $[2,3,\two{6}]+[3,2]$ & $405/23$\\
LDP13.8a & $[2,3,2,2]+[2,3,2]$ & $200/11$\\
LDP13.8b & $[2,3,\two{4}]+[2,2,3]$ & $343/17$\\
$A_{3}(3)$ & $[\two{3},3,\two{5}]+[3]$ & $150/17$\\
$A_{5}(3)$ & $[2,2,3,3,\two{4}]+[3,2,2]$ & $504/31$\\
$A_{6}(3)$ & $[2,3,3,\two{5}]+[2,3,2]$ & $968/53$\\
$B_{5}(3)$ & $[2,2,3,2,2]+[3,\two{4}]$ & $132/5$\\
$B_{8}(3)$ & $[2,4,\two{3}]+[2,3,\two{3}]$ & $252/11$\\
$B_{11}(3)$ & $[2,2,3,\two{3}]+[3,2,2]$ & $343/19$\\
$A_{3}(4)$ & $[\two{3},4,\two{6}]+[3]$ & $363/67$\\
$A_{5}(4)$ & $[2,2,3,4,\two{4}]+[3,\two{3}]$ & $1296/97$\\
$A_{6}(4)$ & $[2,4,3,\two{5}]+[2,3,2,2]$ & $1859/79$\\
$B_{5}(4)$ & $[2,2,4,\two{3}]+[3,\two{4}]$ & $539/31$\\
$B_{8}(4)$ & $[2,2,4,\two{3}]+[2,4,\two{3}]$ & $1078/31$\\
$B_{11}(4)$ & $[2,2,4,\two{4}]+[3,2,2]$ & $224/19$\\
$A_{4}(4)$ & $[2,2,4,\two{6}]+[3,2]$ & $125/13$\\
$A_{4}(5)$ & $[2,2,5,\two{7}]+[3,2]$ & $605/83$\\
$A_{11}$ & $F+[3,3,\two{3}]$ & $28$\\
$A_{15}$ & $F+[2,3,2,2]$ & $20$\\
$A_{16}$ & $F+[3,2,4,2,2]$ & $32$\\
$A_{22}$ & $F+[3,4,\two{5}]$ & $36$\\
$B_{2}$ & $F+[3,\two{4}]$ & $24$\\
$B_{15}$ & $F+[2,4,2,2]$ & $20$\\
$A_{13}(4)$ & $F+[2,4,\two{4}]$ & $28$\\
$A_{13}(5)$ & $F+[2,5,\two{5}]$ & $32$\\
$A_{13}(6)$ & $F+[2,6,\two{6}]$ & $36$\\
$B_{14}(3)$ & $F+[2,3,\two{3}]$ & $24$\\
$B_{14}(4)$ & $F+[2,4,\two{3}]$ & $24$\\

\end{longtable}\endgroup
Here $(\star)$ is the exceptional basket
$[2,2,3,\two5]+[3,2]$ in the comparison preceding
\cite[Table 4]{PP3}; LDP labels refer to the published
classification. The first LDP13(8) basket, with its canonical
$[2,2,2]$ companion restored, also covers the potentially omitted
surface of \cite[Example 3.13]{Nagaoka}. The maximum in this
44-entry table is $1088/29<38$.

It remains to spell out the finite part of $A_{23}$.
Its basket, from \cite[Lemma 4.12(23), Table 2]{PP3}, is
\begin{equation}\label{r1-A23}
 [\two{c-1},b,d,a,\two{b-1}]+[\two{a-1},c+1,\two d].
\end{equation}
The unbounded rays are $(k,2,2,2)$, $(3,2,k,2)$ and
$(2,3,k,2)$, $k\ge3$, already treated above. The remaining
14 Cartesian products, comprising 38 tuples, are as follows;
$[u,v]$ in this table means the set of integers from $u$ to $v$.

\begin{table}[htbp]\centering\small
\caption{The finite parameter ranges in $A_{23}$}\label{tab:A23}
\begin{tabular}{@{}rrrrr@{}}
\toprule
$a$&$b$&$c$&$d$&$\max D_1D_2\mu^2$\\\midrule
$[6,8]$&2&3&2&$2205/47$\\
$[3,7]$&2&2&3&$1944/43$\\
5&2&$[3,4]$&2&$2816/53$\\
4&2&$[3,7]$&2&$1323/31$\\
4&2&2&4&$725/16$\\
$[3,4]$&3&2&2&$1815/41$\\
3&2&2&$[4,7]$&$368/11$\\
3&2&4&3&$2205/43$\\
2&$[4,7]$&2&2&$22$\\
2&5&3&2&$1700/73$\\
2&4&$[3,5]$&2&$3509/71$\\
2&4&3&3&$4312/95$\\
2&3&3&3&$2662/51$\\
2&3&2&$[2,6]$&$539/17$\\\bottomrule
\end{tabular}
\end{table}

Applying the continuant and discrepancy formulas to the two chains
in \eqref{r1-A23} gives
\[
 \max D_1D_2\mu^2=\frac{2816}{53}<54,
 \quad\text{at }(a,b,c,d)=(5,2,4,2).
\]
For example this tuple has determinants $53,44$ and $\mu=8/53$,
giving $53\cdot44\cdot(8/53)^2=2816/53$. The finite parameter
ranges above are the full ranges in the classification, not
truncations of infinite families.

\section{Finite arithmetic for local indices}\label{arithmetic-app}
We record the finite calculations used in the four-point
case and in the numerically trivial case.

\begin{lemma}\label{four-orders-arithmetic}
There are exactly $126$ integer quadruples
$2\le g_1\le g_2\le g_3\le g_4$ satisfying
\[
 \sum_{i=1}^4\frac1{g_i}\ge1,
 \qquad \sum_{i=1}^3\frac1{g_i}<1.
\]
Their least common multiples are at most $1722$.
\end{lemma}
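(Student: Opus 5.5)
Because $g_1\le g_2\le g_3\le g_4$, the condition $\sum_{i=1}^4 1/g_i\ge1$ forces $4/g_1\ge1$, so $g_1\in\{2,3,4\}$. I will enumerate by bounding each entry in turn from the one before, and then check the least common multiples directly.

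\emph{Case analysis.} First fix $g_1$. Then $g_2$ satisfies $3/g_2\ge1-1/g_1$, which gives a finite range for $g_2$. Next $g_3$ satisfies $2/g_3\ge1-1/g_1-1/g_2$; here the right side is positive in every case except $(g_1,g_2)=(2,2)$. That pair is excluded, because $(2,2,g_3)$ already has $\sum_{i\le3}1/g_i\ge1$, contradicting the second condition.

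Finally $g_4$ lies in the interval
\[
g_3\le g_4\le\Bigl(1-\sum_{i\le3}1/g_i\Bigr)^{-1}.
\]
This upper bound is finite precisely because $\sum_{i\le3}1/g_i<1$.

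\emph{Shape of the extremes.} The extremal cases arise when $1-\sum_{i\le3}1/g_i$ is tiny. For example $(2,3,7)$ gives $g_4\le42$, and $(2,3,7,42)$ has lcm $42$. The triples $(2,3,8)$, $(2,3,9)$, $(2,4,5)$, $(2,3,10)$ and so on give $g_4\le24,18,20,15,\dots$.

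For each admissible $(g_1,g_2,g_3)$, listing the integers $g_4$ in the interval gives the quadruples. Counting them gives $126$, and the lcms are then computed directly.

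\emph{Locating the maximum $1722$.} Since $1722=2\cdot3\cdot7\cdot41$, the maximum should come from $(2,3,7,41)$: we have $1/2+1/3+1/7=41/42$, so $g_4=41$ is allowed, with lcm $42\cdot41=1722$. Every other quadruple must give something smaller. Once $g_3\ge8$, the bound on $g_4$ drops to at most $24$, so the lcm is at most roughly $g_1g_2g_3g_4$, which is bounded by a few hundred in those ranges. With $g_3=7$, the candidates are $g_4\in[7,42]$ with lcm at most $42g_4$; only $g_4=41$ reaches $1722$, while $g_4=37,39,40$ give $1554$, $546$ and $840$. All these candidates must be checked.

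\emph{Main obstacle.} The obstacle is purely bookkeeping: confirming the exact count $126$. I would organise it as a table by $(g_1,g_2,g_3)$ with the corresponding range of $g_4$. The bound $1722$ itself is robust, by the estimate $\operatorname{lcm}\le g_1g_2g_3g_4$ outside the $g_3=7$ row.
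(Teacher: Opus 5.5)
Your proposal is correct and follows essentially the paper's route: bound $g_1\le4$, take the successive finite ranges for $g_2,g_3,g_4$, enumerate, and locate the maximal lcm at $(2,3,7,41)$. The enumeration does give $73+27+8+1=109$ quadruples for $g_1=2$, $13+3=16$ for $g_1=3$ and $1$ for $g_1=4$.

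Your product estimate away from the triple $(2,3,7)$ is a convenient way to certify the maximum, which the paper simply reads off from the enumeration. Note that it must also cover the triples with $g_3\le6$, where the products are at most $2\cdot4\cdot5\cdot20=800$. For $g_3\ge8$ they are at most $2\cdot3\cdot8\cdot24=1152$.
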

\begin{proof}
The first inequality gives $g_1\le4$ and
$g_2\le\lfloor3/(1-1/g_1)\rfloor$. Put
$r=1-1/g_1-1/g_2$. In this branch $r>0$, and the two
inequalities give exactly the finite ranges
\[
 \max\{g_2,\lfloor1/r\rfloor+1\}
       \le g_3\le\lfloor2/r\rfloor,
 \qquad
 g_3\le g_4\le\left\lfloor\frac1{r-1/g_3}\right\rfloor.
\]
Enumerating these ranges gives Table~\ref{tab:four-orders}.
The largest least common multiple is attained at
$(2,3,7,41)$.
\begin{table}[htbp]
\centering\small
\caption{Four local group orders with reciprocal sum at least one.}
\label{tab:four-orders}
\begin{tabular}{@{}lrrr@{}}
\toprule
$g_1$ & $2$ & $3$ & $4$\\\midrule
Number of quadruples & $109$ & $16$ & $1$\\
Maximum of $\lcm(g_1,g_2,g_3,g_4)$ & $1722$ & $132$ & $4$\\
\bottomrule
\end{tabular}
\end{table}
\end{proof}

\begin{lemma}\label{ade-budget}
For a collection of ADE singularities of total resolution
rank at most nineteen, the least common multiple $q$ of
the local class-group exponents is at most $840$.
\end{lemma}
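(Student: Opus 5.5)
The plan is to turn the statement into a purely arithmetic optimisation over Kodaira-free ADE data and then settle it by a short case check on prime powers.

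\emph{Local exponents and costs.} First I will record the local class groups of the ADE singularities together with their resolution ranks. For $A_n$ the group is $\Z/(n+1)$, so the exponent is $n+1$ and the rank is $n$. For $D_n$ the group has order four, with exponent $2$ if $n$ is even and $4$ if $n$ is odd, and the rank is $n\ge4$. For $E_6$ the group is $\Z/3$ with rank $6$, for $E_7$ it is $\Z/2$ with rank $7$, and $E_8$ has trivial group with rank $8$.

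In every case a point of exponent $e$ costs rank at least $e-1$. The $D$ and $E$ types cost strictly more than the $A$-chain realising the same exponent: for instance exponent $4$ costs $5$ for $D_5$ against $3$ for $A_3$, and exponent $3$ costs $6$ for $E_6$ against $2$ for $A_2$. Hence it suffices to bound $\lcm(e_1,\dots,e_k)$ subject to $\sum_i(e_i-1)\le19$ with integers $e_i\ge2$.

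\emph{Reduction to prime powers.} Next I will reduce to coprime prime powers. If some $e_i$ factors as $e_i=ab$ with $\gcd(a,b)=1$ and $a,b\ge2$, then replacing $e_i$ by the pair $a,b$ keeps the lcm unchanged. It does not increase the cost, because $(a-1)+(b-1)\le ab-1$. Discarding an $e_i$ whose prime-power part is already dominated by another entry only lowers the cost. So we may assume the $e_i$ are powers of distinct primes $p_i^{a_i}$, with $\sum_i(p_i^{a_i}-1)\le19$, and we want to maximise their product.

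\emph{Finite enumeration.} The final step is the finite enumeration. The candidate prime powers are those with $p^a\le20$, namely $2,4,8,16,3,9,5,7,11,13,17,19$. Sorting by the largest factor used gives the following cases.
\begin{itemize}
\item With $19$ or $17$, the remaining budget is at most $2$, giving a product of at most $57$.
\item With $16$, the remaining budget is $4$, giving at most $80$.
\item With $13$, the remaining budget is $7$, giving at most $13\cdot8=104$ or $13\cdot5\cdot3=195$.
\item With $11$, the remaining budget is $9$, giving at most $11\cdot4\cdot3\cdot5=660$.
\item With $9$ but no prime above $7$, the best is $9\cdot8\cdot5=360$, since $9\cdot8\cdot7$ costs $21$.
\item Otherwise the entries lie among $8,3,5,7$, whose full product $840$ costs exactly $7+2+4+6=19$.
\end{itemize}
Thus the maximum is $840$, attained by $A_7+A_2+A_4+A_6$, which proves the lemma.

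The only delicate point is making the enumeration visibly exhaustive. I would organise it exactly by the largest prime power used, as above, so that each case is a one-line budget computation. No step beyond this bookkeeping appears to be a genuine obstacle.
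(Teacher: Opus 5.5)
Your reduction is the same as the paper's. You use the rank cost $e-1$ per exponent, split coprime factors via $(a-1)+(b-1)\le ab-1$, and arrive at $\sum_{p^a\Vert q}(p^a-1)\le19$. The paper then enumerates prime powers $p^a\le20$ under this budget, as you do. However, three of your case maxima are false as written:
\begin{enumerate}
\item With $17$ the remaining budget is $19-16=3$, not at most $2$. The choice $17\cdot2\cdot3=102$, of cost $16+1+2=19$, exceeds your claimed $57$.
\item With $13$ you missed $13\cdot2\cdot3\cdot5=390$, of cost $12+1+2+4=19$, which exceeds your $195$.
\item With $9$ and no prime above $7$, the choice $9\cdot2\cdot5\cdot7=630$, of cost $8+1+4+6=19$, beats your claimed best $9\cdot8\cdot5=360$.
\end{enumerate}
The corrected values $102$, $390$ and $630$ agree with the paper's Table~\ref{tab:ade-budget}, where $630$ falls under largest prime $7$. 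They all stay below $840$, so the lemma survives. Still, the enumeration is the entire substance of this proof, and you assert it carries no genuine obstacle. In each case, list the admissible sets of distinct-prime powers within the residual budget explicitly, rather than naming a presumed best choice.
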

\begin{proof}
The exponents for $A_r,D_r,E_6,E_7,E_8$ are respectively
$r+1$, either $2$ or $4$, $3$, $2$, and $1$.
For an $A_r$ singularity,
\[
           \sum_{p^a\Vert r+1}(p^a-1)\le r,
\]
by repeated use of $u+v-2\le uv-1$ for $u,v\ge2$.
The analogous inequality follows directly from the
displayed exponents for the other types. Assign each
maximal prime power in $q$ to one component whose exponent
contains it. Summing the resulting rank costs gives
\begin{equation}\label{ADE-budget-eq}
                  \sum_{p^a\Vert q}(p^a-1)\le19.
\end{equation}
Only primes at most nineteen can occur. For each such
prime, choose either no factor or one of its powers
$p^a\le20$, retain choices whose total cost in
\eqref{ADE-budget-eq} is at most nineteen, and multiply
the chosen powers. Table~\ref{tab:ade-budget} gives the
maxima, grouped by largest prime. The case $q=1$ is
immediate. The maximum $840=8\cdot3\cdot5\cdot7$ has
cost $7+2+4+6=19$.
\begin{table}[htbp]
\centering\small
\caption{Products of prime powers under the ADE rank budget.}
\label{tab:ade-budget}
\begin{tabular}{@{}lrrrrrrrr@{}}
\toprule
Largest prime & $2$ & $3$ & $5$ & $7$ & $11$ & $13$ & $17$ & $19$\\\midrule
Maximum product & $16$ & $72$ & $360$ & $840$ & $660$ & $390$ & $102$ & $38$\\
\bottomrule
\end{tabular}
\end{table}
\end{proof}

\end{document}